\newcommand{\operator}{\mathsf}
\DeclareMathOperator{\content}{\operator{con}}
\DeclareMathOperator{\var}{\operator{var}}
\newtheorem{theorem}{Theorem}[section]
\newtheorem{proposition}[theorem]{Proposition}
\newtheorem{lemma}[theorem]{Lemma}
\newtheorem{corollary}[theorem]{Corollary}
\theoremstyle{remark}
\newtheorem{remark}[theorem]{Remark}
\numberwithin{equation}{section}
\newcommand{\Jtrivial}{$J$-trivial}
\newcommand{\monoid}{ }
\newcommand{\monAzero}{\monoid{A_0}}
\newcommand{\monBzero}{\monoid{B_0}}
\newcommand{\monE}{\monoid{E}} \newcommand{\monEdual}{\overleftarrow{\monE}}
\newcommand{\monFsub}{\monoid{F}_1}
\newcommand{\monHsub}{\monoid{H_3}} \newcommand{\monHsubdual}{\overleftarrow{\,\monHsub}}
\newcommand{\monK}{\monoid{K}}
\newcommand{\monM}{\monoid{M}} 
\newcommand{\monQ}{\monoid{Q}}
\newcommand{\monRq}{\monoid{Rq}}
\newcommand{\variety}{\mathbf}
\newcommand{\bfAzero}{\variety{\monAzero}}
\newcommand{\bfBzero}{\variety{\monBzero}}
\newcommand{\bfE}{\variety{E}} \newcommand{\bfEdual}{\overleftarrow{\bfE}}
\newcommand{\bfF}{\variety{F}} \newcommand{\bfFdual}{\overleftarrow{\bfF}}
\newcommand{\bfFsub}{\variety{F}_1}
\newcommand{\bfH}{\variety{H}} \newcommand{\bfHdual}{\overleftarrow{\bfH}}
\newcommand{\bfHsub}{\variety{H}_3} \newcommand{\bfHsubdual}{\overleftarrow{\bfH}\!{}_3}
\newcommand{\bfI}{\variety{I}} \newcommand{\bfIdual}{\overleftarrow{\bfI}}
\newcommand{\bfIsub}{\variety{T}} 
\newcommand{\bfJ}{\variety{J}}
\newcommand{\bfK}{\variety{K}} \newcommand{\bfKdual}{\overleftarrow{\bfK}}
\newcommand{\bfKsub}{\variety{N}}
\newcommand{\bfL}{\variety{L}}
\newcommand{\bfO}{\variety{O}}
\newcommand{\bfP}{\variety{P}} \newcommand{\bfPdual}{\overleftarrow{\bfP}}
\newcommand{\bfPsub}{\variety{W}}
\newcommand{\bfQ}{\variety{Q}}
\newcommand{\bfRq}{\variety{Rq}}
\newcommand{\bfU}{\variety{U}}
\newcommand{\bfV}{\variety{V}} 
\newcommand{\bfJackson}{\variety{Y}}
\newcommand{\bfZL}{\variety{Z}}
\newcommand{\bftrivial}{\variety{0}}
\newcommand{\word}{\mathbf}
\newcommand{\bfa}{\word{a}}
\newcommand{\bfb}{\word{b}}
\newcommand{\bfc}{\word{c}}
\newcommand{\bfh}{\word{h}}
\newcommand{\bfp}{\word{p}}
\newcommand{\bfq}{\word{q}}
\newcommand{\bfu}{\word{u}}  
\newcommand{\bfv}{\word{v}}  
\newcommand{\bfw}{\word{w}}  
\newcommand{\bfx}{\word{x}}
\newcommand{\scrA}{\mathscr{A}}
\newcommand{\scrW}{\mathscr{W}}
\newcommand{\scrX}{\mathscr{X}}
\newcommand{\frakL}{\mathfrak{L}}
\newcommand{\class}{\mathbb}
\newcommand{\bbAcen}{\class{A}^\mathrm{cen}}
\newcommand{\bbAcom}{\class{A}^\mathrm{com}}
\newcommand{\bbJ}{\class{J}}
\newcommand{\bbM}{\class{M}}
\newcommand{\etal}{\textit{et~al}}
\newcommand{\aper}{\text{\textcircled{a}}} \newcommand{\ecom}{\text{\textcircled{c}}}
\newcommand{\xxyx}{\blacktriangleleft} \newcommand{\xyxx}{\blacktriangleright}
\newcommand{\Osub}{\circledR}
\newcommand{\dt}{0.12}
\begin{document}

\title{Characterization of Cross varieties of $J$-trivial monoids}
\thanks{S.V.\ Gusev was supported by the Russian Science Foundation (no.~25-71-00005); W.T.\ Zhang was partially supported by the National Natural Science Foundation of China (nos.~12271224 and 12571018) and the Fundamental Research Funds for the Central Universities (no.~lzujbky-2023-ey06).} 

\author[S.V. Gusev]{Sergey V. Gusev} \address{Institute of Natural Sciences and Mathematics, Ural Federal University, 620000 Ekaterinburg, Russia} \email{sergey.gusev@urfu.ru}

\author[E.W.H. Lee]{Edmond W. H. Lee} \address{Department of Mathematics, Nova Southeastern University, Fort Lauderdale, Florida 33328, USA} \email{edmond.lee@nova.edu}

\author[W.T. Zhang]{Wen Ting Zhang} \address{School of Mathematics and Statistics, Lanzhou University, Lanzhou, Gansu 730000, P.R. China} \email{zhangwt@lzu.edu.cn}

\keywords{Monoid, $J$-trivial monoid, variety, Cross variety}

\subjclass{20M07}

\begin{abstract}
A finitely based, finitely generated variety with finitely many subvarieties is a \textit{Cross} variety.
In the present article, it is shown that a variety of $J$-trivial monoids is Cross if and only if it excludes as subvarieties a certain list of 14 almost Cross varieties.
Consequently, the list of 14 varieties exhausts all almost Cross varieties of $J$-trivial monoids. 
\end{abstract}

\keywords{Monoid, $J$-trivial monoid, variety, Cross variety, almost Cross variety}

\subjclass{20M07}

\maketitle

\tableofcontents

\section{Introduction} \label{sec: introduction}

A nonempty class of algebras of a fixed type---such as groups, rings, and semigroups---is a \textit{variety} if it is closed under the formation of subalgebras, homomorphic images, and arbitrary direct products.
By Birkhoff's theorem~\cite{Bir35}, varieties are precisely equationally defined classes of algebras and hence can be investigated by both semantic and syntactic methods.
Finiteness conditions relevant to these methods naturally receive much attention: a variety is \textit{finitely based} if its equational theory is finitely axiomatizable, \textit{finitely generated} if it is generated by a finite algebra, and \textit{small} if it has a finite lattice of subvarieties.
These three conditions are independent in the sense that a variety that satisfies any two conditions need not satisfy the third~\cite{SapMV91}.
Following Higman~\cite{Hig60}, any variety that satisfies all three conditions is said to be \textit{Cross}.

Finite members from well-studied classes of algebras, such as groups~\cite{OatPow64}, associative rings~\cite{Kru73,Lvo73}, Lie rings~\cite{BahOls75}, and lattices~\cite{McK70}, generate Cross varieties.
But this result does not hold for more general algebras, a striking counterexample being the Brandt monoid \[ B = \left\{ \begin{bmatrix}0&0\\0&0\end{bmatrix}, \begin{bmatrix}1&0\\0&0\end{bmatrix}, \begin{bmatrix}0&1\\0&0\end{bmatrix}, \begin{bmatrix}0&0\\1&0\end{bmatrix}, \begin{bmatrix}0&0\\0&1\end{bmatrix}, \begin{bmatrix}1&0\\0&1\end{bmatrix} \right\} \] under usual matrix multiplication; not only is the variety generated by~$B$ non-finitely based~\cite{SapMV87}, its subvarieties form an uncountable lattice~\cite{JacLee18} that embeds every finite lattice~\cite{Gus24}.
For any class of algebras that contains non-Cross varieties, one approach to characterizing Cross varieties is to identify its minimal non-Cross varieties, which are commonly called \textit{almost Cross} varieties. 
It follows from Zorn's lemma that the exclusion of almost Cross subvarieties is not only necessary but also sufficient for a variety to be Cross.

The present article is concerned with varieties of monoids.
Classical examples of almost Cross varieties include the varieties \[ \mathbf{Com} = \var\{xy \approx yx\} \ \text{ and } \ \mathbf{Idem} = \var\{ x^2 \approx x \} \] of commutative monoids~\cite{Hea68} and idempotent monoids~\cite{Wis86}, respectively, which are both non-finitely generated.
Since a variety of groups is Cross if and only if it is finitely generated~\cite{OatPow64}, describing all almost Cross varieties of groups may not seem challenging; but the task is actually closer to being impossible, as there are already uncountably many of them covering just the variety \[ \mathbf{Z}_p = \var\{ x^p \approx 1,\, xy \approx yx\} \] generated by the cyclic group of each sufficiently large prime order~$p$~\cite{Koz12}.
Consequently, a characterization of all Cross varieties of monoids is infeasible, and it is logical to focus on monoids without nontrivial subgroups; such a monoid satisfies the identity $x^{n+1} \approx x^n$ for some $n \geq 0$ and is said to be \textit{aperiodic}.

Some classes in which almost Cross varieties have been completely described are the class~$\bbAcom$ of aperiodic monoids with commuting idempotents and its subclass~$\bbAcen$ of aperiodic monoids with central idempotents.
In 2005, Jackson~\cite{Jac05} published the first two examples of almost Cross subvarieties of~$\bbAcen$; these varieties, denoted by $\bfJackson_1$ and $\bfJackson_2$ (see Section~\ref{subsec: var Y1 Y2}), inspired the investigation of Cross subvarieties of~$\bbAcen$~\cite{Lee11}.
In 2013, a third almost Cross subvariety of~$\bbAcen$, denoted by~$\bfL$ (see Section~\ref{subsec: var L P}), was published; not only are~$\bfL, \bfJackson_1,\bfJackson_2$ the only almost Cross subvarieties of~$\bbAcen$~\cite{Lee13}, they are also the only almost Cross subvarieties of~$\bbAcom$ satisfying the identity $x^2 h x \approx xhx^2$ \cite{Lee14rm}.
In the years that followed, further examples of almost Cross subvarieties of $\bbAcom$ were found \cite{Gus20,Gus21,GusVer18}.
A description of all almost Cross subvarieties of~$\bbAcom$ was eventually completed by Gusev~\cite{Gus25ijac}.
Specifically, there are precisely nine almost Cross subvarieties of~$\bbAcom$: the subvarieties $\bfL, \bfJackson_1, \bfJackson_2$ of~$\bbAcen$ and the subvarieties $\bfF,\bfI,\bfP$ of $\bbAcom$ together with their duals $\bfFdual, \bfIdual,\bfPdual$ (see Sections~\ref{subsec: var F}, \ref{subsec: var L P}, and~\ref{subsec: var I}).

An obvious next step in the investigation is to extend the description of almost Cross varieties to other classes of monoids; one natural candidate is the class~$\bbJ$ of {\Jtrivial} monoids since it already contains all nine almost Cross subvarieties of~$\bbAcom$.
Recall that a monoid~$\monM$ is \textit{\Jtrivial} if distinct elements in~$\monM$ generate distinct principal two-sided ideals, that is, $MaM \neq MbM$ for all distinct $a,b \in \monM$.
The fundamental importance of {\Jtrivial} monoids lies in automata theory and formal language theory; in particular, finite {\Jtrivial} monoids correspond precisely to the class of recognizable languages known as the piecewise testable events~\cite{Sim75}. 
Presently, 12 subvarieties of~$\bbJ$ are known to be almost Cross: the aforementioned nine subvarieties of~$\bbAcom$, a certain variety~$\bfK$ from Gusev and Sapir~\cite{GusSap22} with its dual~$\bfKdual$, and a certain variety~$\bfZL$ from Zhang and Luo~\cite{ZhaLuo19} (see Sections~\ref{subsec: var K} and~\ref{subsec: var ZL}).

The objective of this article is to characterize all Cross subvarieties of~$\bbJ$ by providing a complete description of its almost Cross subvarieties.
Some background information and results are first established in Section~\ref{sec: prelim}.
Then almost Cross subvarieties of~$\bbJ$ and related information required to establish the main result are given in Sections~\ref{sec: almost nfg} and~\ref{sec: almost fg}; in particular, a new almost Cross variety~$\bfH$ and its dual~$\bfHdual$ are exhibited in Section~\ref{subsec: var H}, bringing the total number of known almost Cross subvarieties of~$\bbJ$ to~14.
In Section~\ref{sec: characterization}, it is shown that a subvariety of~$\bbJ$ is Cross if and only if it excludes these 14 almost Cross varieties, thus completing the description of all almost Cross subvarieties of~$\bbJ$.

\section{Preliminaries} \label{sec: prelim}

Acquaintance with rudiments of universal algebra is assumed of the reader; refer to Almeida~\cite{Alm94} and Burris and Sankappanavar~\cite{BurSan81} for any undefined concepts.
For more information on varieties of monoids, see Gusev \etal.\ \cite{GusLeeVer22} and Lee \cite[Section~1.6]{Lee23}.

\subsection{Words and identities}

Let~$\scrA$ be a countably infinite alphabet.
For any subset~$\scrX$ of~$\scrA$, let~$\scrX^+$ and~$\scrX^*$ denote the free semigroup and free monoid over~$\scrX$, respectively. 
Elements of~$\scrA$ and~$\scrA^*$ are called \textit{variables} and \textit{words}, respectively. 
The empty word, written as~$1$, is the identity element of the monoid~$\scrA^*$; in other words, $\scrA^* = \scrA^+ \cup \{ 1 \}$.
The \textit{content} of a word~$\bfw$, denoted by $\content(\bfw)$, is the set of variables occurring in~$\bfw$.
The number of times a variable~$x$ occurs in~$\bfw$ is denoted by~$|\bfw|_x$.
A variable~$x$ in~$\bfw$ is \textit{simple} if $|\bfw|_x= 1$; otherwise, it is \textit{non-simple}.

An \textit{identity} is an expression $\bfu \approx \bfv$, where $\bfu, \bfv \in \scrA^*$; it is \textit{nontrivial} if $\bfu \neq \bfv$.
A monoid~$\monM$ \textit{satisfies} an identity $\bfu \approx \bfv$, indicated by $\monM \models \bfu \approx \bfv$, if for any substitution $\varphi:\scrA \to \monM$, the equality $\varphi\bfu = \varphi\bfv$ holds in~$\monM$.
A class~$\bbM$ of monoids \textit{satisfies} an identity $\bfu \approx \bfv$, indicated by $\bbM \models \bfu \approx \bfv$, if $\monM \models \bfu \approx \bfv$ for all $\monM \in \bbM$.

An identity $\bfu \approx \bfv$ is \textit{directly deducible} from an identity $\bfp \approx \bfq$ if some substitution $\varphi: \scrA \to \scrA^*$ and words $\bfa,\bfb \in \scrA^*$ exist such that $\{ \bfu,\bfv \} = \{ \bfa(\varphi\bfp)\bfb,\, \bfa(\varphi\bfq) \bfb \}$.
An identity $\bfu \approx \bfv$ is \textit{deducible} from a set~$\Sigma$ of identities, indicated by $\Sigma \vdash \bfu \approx \bfv$, if there exists a sequence $\bfu = \bfw_0,\, \bfw_1, \, \ldots, \, \bfw_k = \bfv$ of distinct words such that each identity $\bfw_i \approx \bfw_{i+1}$ is directly deducible from some identity in~$\Sigma$.
Informally, a deduction $\Sigma \vdash \bfu \approx \bfv$ holds if the identities in~$\Sigma$ can be used to convert~$\bfu$ into~$\bfv$.
Two sets of identities~$\Sigma_1$ and~$\Sigma_2$ are \textit{equivalent}, indicated by $\Sigma_1 \sim \Sigma_2$, if the deductions $\Sigma_1 \vdash \Sigma_2$ and $\Sigma_2 \vdash \Sigma_1$ hold.

\subsection{Some basic properties of varieties}

For any set~$\Sigma$ of identities, the variety \textit{defined by~$\Sigma$}, denoted by $\var\Sigma$, is the class of all monoids that satisfy every identity in~$\Sigma$.
A variety is \textit{finitely based} if it is defined by some finite set of identities; otherwise, it is \textit{non-finitely based}.
The subvariety of a variety~$\bfV$ defined by~$\Sigma$ is $\bfV\Sigma = \bfV \cap \var\Sigma$.

For any monoid~$\monM$, the variety \textit{generated by}~$\monM$ is the smallest variety containing~$\monM$.
A variety is \textit{finitely generated} if it is generated by a finite monoid; it is \textit{locally finite} if every finitely generated member is finite.
It is well known that every finitely generated variety is locally finite; see, for instance, Burris and Sankappanavar \cite[Theorem~II.10.16]{BurSan81}.

\begin{lemma} \label{L: LF FG max}
\begin{enumerate}[\rm(i)]
\item Any variety that satisfies any of the following identities is locally finite\textup: \begin{align*} x^2hx & \approx xhx, \tag{$\xxyx$} \label{id: xxyx=xyx} \\ xhx^2 & \approx xhx. \tag{$\xyxx$} \label{id: xyxx=xyx} \end{align*}
\item Every locally finite variety that is small is also finitely generated.
\item Every finitely generated variety has finitely positively many maximal subvarieties.
\end{enumerate}
\end{lemma}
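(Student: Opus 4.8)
I would prove the three parts separately: (ii) and (iii) are short consequences of general principles, while (i) carries the real weight, and its internal word combinatorics is where I expect the main difficulty to lie.

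\emph{Part~(ii).} Suppose $\bfV$ is locally finite and small. For each $n \geq 1$ the relatively free monoid $F_{\bfV}(n)$ over an $n$-letter alphabet is finite, so the subvariety $\bfV_n$ of $\bfV$ that it generates is finitely generated. Sending one of the $n+1$ free generators to the identity element presents $F_{\bfV}(n)$ as a homomorphic image of $F_{\bfV}(n+1)$, so $\bfV_1 \subseteq \bfV_2 \subseteq \cdots$ is an ascending chain in the subvariety lattice of $\bfV$; this lattice is finite, so the chain stabilizes, say at $\bfV_{n_0}$. Since any identity failing in $\bfV$ involves only finitely many variables and thus already fails in some $F_{\bfV}(n)$, the join of this chain is $\bfV$; therefore $\bfV = \bfV_{n_0}$ is generated by the finite monoid $F_{\bfV}(n_0)$.

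\emph{Part~(iii).} Let $\bfV$ be generated by a finite monoid $M$, put $m = |M|$, and note that $\bfV$, being finitely generated, is locally finite, so the relatively free monoid $F$ of $\bfV$ over a fixed $m$-letter alphabet $Y$ is finite. The key step is to show that every proper subvariety $\bfU$ of $\bfV$ satisfies an identity over $Y$ that is not satisfied by $M$: since $M \notin \bfU$, the variety $\bfU$ satisfies some identity $\bfu' \approx \bfv'$ failing in $M$ under a substitution $\varphi\colon\scrA \to M$, and identifying all variables on which $\varphi$ takes the same value turns $\bfu' \approx \bfv'$ into an identity $\bfu \approx \bfv$ over at most $m$ variables which is still satisfied by $\bfU$ (being a substitution instance of $\bfu' \approx \bfv'$) but still fails in $M$. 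Then $\bfU \subseteq \bfV\{\bfu \approx \bfv\}$, a subvariety of $\bfV$ that is proper because $M \not\models \bfu \approx \bfv$; and $\bfV\{\bfu \approx \bfv\}$ is determined by the (necessarily distinct, since $M$ already separates them) pair of elements of $F$ named by $\bfu$ and $\bfv$, so at most $|F|^2$ such subvarieties occur. As every proper subvariety of $\bfV$ lies inside one of them, every maximal subvariety of $\bfV$ coincides with one of them, so there are only finitely many; and if $\bfV$ is nontrivial then a maximal member of this finite family of proper subvarieties is a maximal subvariety of $\bfV$, so at least one exists.

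\emph{Part~(i).} Since a variety is locally finite exactly when each of its finitely generated relatively free monoids is finite, the task reduces to showing that $F_{\bfV}(X)$ is finite, i.e.\ that $X^{*}$ has only finitely many classes modulo $\bfV$, for every finite alphabet $X$. Substituting arbitrary words for $h$ in~\eqref{id: xxyx=xyx} produces the consequences $x^3 \approx x^2$ (take $h = 1$) and $\bfa\bfp^2\bfq\bfp\bfb \approx \bfa\bfp\bfq\bfp\bfb$ for all words $\bfa,\bfp,\bfq,\bfb$, and the plan is to use these to rewrite an arbitrary word over $X$ into one from a finite pool of canonical words (for instance, words of length bounded in terms of $|X|$). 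The identity~\eqref{id: xyxx=xyx} is then handled by symmetry: a monoid satisfies~\eqref{id: xyxx=xyx} precisely when its opposite monoid satisfies~\eqref{id: xxyx=xyx}, so the relatively free monoid over $X$ in the first variety is the opposite of that in the second and has the same cardinality. The main obstacle is the rewriting step itself: a single application of~\eqref{id: xxyx=xyx} shortens a word only when that word contains a suitable repeated factor, whereas square-free words of every length exist over a three-letter alphabet, so the reduction to canonical form must in general pass through temporarily longer words; the combinatorial heart of the proof is the argument that this process nonetheless terminates at a word of bounded length — equivalently, the explicit construction of a finite monoid through which $X^{*}$ factors modulo~$\bfV$.
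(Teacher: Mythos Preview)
Your arguments for (ii) and (iii) are correct and more explicit than the paper's, which simply cites Jackson--Lee \cite{JacLee18} and Lee--Rhodes--Steinberg \cite{LeeRhoSte19} respectively; you have effectively unpacked those citations with the standard proofs (the stabilizing chain of $n$-generated relatively free monoids for (ii), and the finite family of one-identity subvarieties over $|M|$ variables for (iii)).

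Part (i), however, has a genuine gap. You correctly derive $x^3 \approx x^2$ and correctly diagnose why a naive length-reducing rewriting scheme fails (arbitrarily long square-free words over three letters), but you then stop exactly where the real work begins: you say the ``combinatorial heart of the proof'' is showing that rewriting through possibly longer intermediates nonetheless reaches a word of bounded length, or equivalently that a finite quotient of $X^*$ exists, without giving any indication of how to do this. That missing step is not a technicality; it \emph{is} the theorem. The paper does not attempt a direct proof either --- it invokes M.~V.~Sapir's general result \cite[Proposition~3.1]{SapMV87} on which semigroup identities force local finiteness, whose proof passes through Zimin words and the solution of the Burnside-type problem for semigroup varieties. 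Unless you either reproduce that machinery or supply an independent construction of canonical forms (and nothing in your outline suggests how), your treatment of (i) is a description of the obstacle rather than a proof.
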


\begin{proof}
(i) This follows from a general result of M.V.\ Sapir \cite[Proposition~3.1]{SapMV87}.

(ii) See Jackson and Lee \cite[Lemma~2.1]{JacLee18}.

(iii) See Lee \etal. \cite[Proposition~4.1]{LeeRhoSte19}.
\end{proof}

The \textit{dual} of a monoid~$\monM$, denoted by~$\overleftarrow{\monM}$, is the monoid obtained by reversing the multiplication of~$\monM$; in other words, the multiplication tables of~$\monM$ and~$\overleftarrow{\monM}$ are transposes of one another.
The \textit{dual} of a variety~$\bfV$ is $\overleftarrow{\bfV} = \{ \overleftarrow{\monM} \,|\, \monM \in \bfV \}$.
Equivalently, if~$\bfV$ is defined by some set $\{ \bfu_i \approx \bfv_i \,|\, i \in I \}$ of identities, then its dual~$\overleftarrow{\bfV}$ is defined by $\{ \overleftarrow{\,\bfu_i} \approx \overleftarrow{\,\bfv_i} \,|\, i \in I \}$, where~$\overleftarrow{\,\bfu_i}$ and~$\overleftarrow{\,\bfv_i}$ are, respectively, the words~$\bfu_i$ and~$\bfv_i$ written in reverse order.
A variety~$\bfV$ is \textit{self-dual} if $\bfV = \overleftarrow{\bfV}$.

A variety~$\bfV$ is \textit{small} if its lattice $\frakL(\bfV)$ of subvarieties is finite.
The \textit{interva}l $[\bfU,\bfV]$ is the lattice of all subvarieties of~$\bfV$ containing~$\bfU$; in particular, $\frakL(\bfV) = [\bftrivial,\bfV]$ where~$\bftrivial$ is the variety of trivial monoids.

\subsection{Varieties of \texorpdfstring{{\Jtrivial}}{J-trivial} monoids} 

Recall that~$\bbJ$ denotes the class of all {\Jtrivial} monoids.
The definition of subvarieties of~$\bbJ$ involves the \textit{aperiodicity identities} \begin{equation*} x^{n+1} \approx x^n, \quad n \geq 0 \tag*{$\aper$} \label{id: aperiodic} \end{equation*} and the \textit{eventual commutativity identities} \begin{equation*} (xy)^n \approx (yx)^n, \quad n \geq 0. \tag*{$\ecom$} \label{id: eventually} \end{equation*} 
Specifically, let~$\text{\ref{id: aperiodic}}_n$ and~$\text{\ref{id: eventually}}_n$ denote the $n$th identities in~\ref{id: aperiodic} and~\ref{id: eventually}, respectively, and let \[ \bfJ_n = \var\{\text{\ref{id: aperiodic}}_n,\, \text{\ref{id: eventually}}_n\}. \]
Note that $\bfJ_0 = \var\{x \approx 1\} = \bftrivial$ and $\bfJ_1 = \var\{x^2 \approx x,\; xy \approx yx\}$ is the variety of \textit{semilattice} monoids, that is, monoids that are idempotent and commutative.

\begin{lemma} \label{L: subvarieties of J}
\begin{enumerate}[\rm(i)] 
\item The inclusions $\bfJ_0 \subset \bfJ_1 \subset \bfJ_2 \subset \cdots \subset \bbJ$ hold and are proper.
\item The class~$\bbJ$ is not a variety\textup, but every subvariety of~$\bbJ$ is a subvariety of~$\bfJ_n$ for all sufficiently large $n \geq 0$.
\end{enumerate}
\end{lemma}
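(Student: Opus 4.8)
The plan is to prove the two parts separately; the substantive points are the inclusion $\bfJ_n \subseteq \bbJ$ in part~(i) and the fact that an arbitrary subvariety of~$\bbJ$ lies inside some~$\bfJ_n$ in part~(ii), while everything else reduces to short syntactic or ideal-theoretic arguments.

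For part~(i), the chain $\bfJ_0 \subseteq \bfJ_1 \subseteq \cdots$ is obtained by deductions: appending~$x$ to both sides of~$\text{\ref{id: aperiodic}}_n$ gives $\{\text{\ref{id: aperiodic}}_n\} \vdash \text{\ref{id: aperiodic}}_{n+1}$, while substituting $x \mapsto xy$ and $x \mapsto yx$ in~$\text{\ref{id: aperiodic}}_n$ yields $(xy)^{n+1} \approx (xy)^n$ and $(yx)^{n+1} \approx (yx)^n$, which chain with~$\text{\ref{id: eventually}}_n$ to give $\{\text{\ref{id: aperiodic}}_n, \text{\ref{id: eventually}}_n\} \vdash \text{\ref{id: eventually}}_{n+1}$; hence $\bfJ_n \subseteq \bfJ_{n+1}$ (the case $\bftrivial = \bfJ_0 \subseteq \bfJ_1$ being immediate). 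The inclusion $\bfJ_n \subseteq \bbJ$ — that every monoid satisfying $\text{\ref{id: aperiodic}}_n$ and $\text{\ref{id: eventually}}_n$ is {\Jtrivial} — is the classical characterization of Simon~\cite{Sim75} for finite monoids, which extends to arbitrary monoids via local finiteness of~$\bfJ_n$ (any equality $MaM = MbM$ is witnessed inside a finitely generated, hence finite, submonoid); I would cite this rather than reprove it. For properness, the monogenic monoid $C_{n+1} = \langle c \mid c^{n+2} = c^{n+1} \rangle$ is commutative and {\Jtrivial}, lies in~$\bfJ_{n+1}$, but fails~$\text{\ref{id: aperiodic}}_n$ since $c^{n+1} \neq c^n$; thus $\bfJ_n \subsetneq \bfJ_{n+1}$, and together with $\bfJ_{n+1} \subseteq \bbJ$ this gives $\bfJ_n \subsetneq \bbJ$, while $\bfJ_0 \subsetneq \bfJ_1$ holds since $\bfJ_1$ contains the two-element semilattice.

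For part~(ii), that~$\bbJ$ is not a variety follows from its not being closed under homomorphic images: the free monogenic monoid $(\mathbb{N},+)$ is {\Jtrivial}, yet reduction modulo~$2$ maps it onto $(\mathbb{Z}_2,+)$, which is not. Now let~$\bfV$ be a variety with $\bfV \subseteq \bbJ$. The relatively free monoid $F = F_{\bfV}(\{x\})$ lies in $\bfV \subseteq \bbJ$ and is a homomorphic image of $\{x\}^* \cong (\mathbb{N},+)$; inspecting the congruences of $(\mathbb{N},+)$ — a quotient whose monogenic part is periodic of period~$\geq 2$ contains a nontrivial cyclic group and so is not {\Jtrivial} — the {\Jtrivial} quotients of $(\mathbb{N},+)$ are exactly $(\mathbb{N},+)$ itself and the finite chains~$C_t$. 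If $F \cong (\mathbb{N},+)$, then $(\mathbb{N},+) \in \bfV$, so $(\mathbb{Z}_2,+) \in \bfV \subseteq \bbJ$, a contradiction; hence $F \cong C_t$ for some~$t$, which forces $\bfV \models \text{\ref{id: aperiodic}}_t$. Using this, in any $M \in \bfV$ and any $a,b \in M$ one has $(ab)^t = (ab)^{t+1} = a(ba)^t b \in M(ba)^t M$ and, symmetrically, $(ba)^t \in M(ab)^t M$, so $M(ab)^t M = M(ba)^t M$; since~$M$ is {\Jtrivial} this yields $(ab)^t = (ba)^t$, whence $\bfV \models \text{\ref{id: eventually}}_t$. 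Therefore $\bfV \subseteq \bfJ_t$, and part~(i) gives $\bfV \subseteq \bfJ_n$ for every $n \geq t$.

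The one genuinely nontrivial ingredient is the inclusion $\bfJ_n \subseteq \bbJ$, i.e.\ that~$\text{\ref{id: aperiodic}}_n$ and~$\text{\ref{id: eventually}}_n$ force a monoid to be {\Jtrivial}, even when it is infinite; I expect to invoke Simon's characterization together with local finiteness of~$\bfJ_n$ there. Every remaining step — the syntactic deductions, the separating monoids~$C_{n+1}$, the description of the congruences on $(\mathbb{N},+)$, and the principal-ideal computation producing~$\text{\ref{id: eventually}}_t$ — is elementary.
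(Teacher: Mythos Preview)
Your proposal is correct, and supplies considerably more detail than the paper, which disposes of~(i) as ``routinely verified'' and simply cites Gusev--Sapir \cite[Fact~2.1]{GusSap22} for~(ii). Your self-contained argument for part~(ii) --- extracting~$\text{\ref{id: aperiodic}}_t$ from the one-generated free object and then deriving~$\text{\ref{id: eventually}}_t$ via the principal-ideal computation $(ab)^t = a(ba)^t b$ --- is exactly the standard one and is complete as written.

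One remark on part~(i). Your plan to obtain $\bfJ_n \subseteq \bbJ$ by combining Simon's theorem for finite monoids with local finiteness of~$\bfJ_n$ is valid, but local finiteness of~$\bfJ_n$ is itself a nontrivial fact requiring its own citation; it is not an immediate byproduct of Simon's work. There is a short direct argument that avoids it and is presumably what the authors have in mind by ``routinely verified'': given $a = pbq$ and $b = ras$ in $M \in \bfJ_n$, set $e = (pr)^n$ and $f = (sq)^n$. By~$\text{\ref{id: eventually}}_n$ one has $e = (rp)^n$ and $f = (qs)^n$, so $a = eaf$ and $b = ebf$. From $e = (pr)^{n+1} = per$ and~$\text{\ref{id: aperiodic}}_n$ one gets $pe = er = e$, and from $e = (rp)^{n+1} = rep$ likewise $re = ep = e$; symmetrically $f$ absorbs $q,s$ on both sides. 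Then $ebf = e(ras)f = (er)a(sf) = eaf = a$, while also $ebf = b$, whence $a = b$. This replaces the appeal to local finiteness by three lines of computation and makes the whole lemma genuinely elementary.
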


\begin{proof}
(i) This is routinely verified.

(ii) See Gusev and Sapir \cite[Fact~2.1]{GusSap22}.
\end{proof}

A word~$\mathbf w$ is an \textit{isoterm} for a variety~$\bfV$ if~$\bfV$ does not satisfy any nontrivial identity of the form $\bfw \approx \bfw'$. 
A monoid that is a union of groups is \textit{completely regular}.
A variety is \textit{completely regular} if it consists of completely regular monoids.
It is well known that a periodic variety is completely regular if and only if it satisfies the identity $x^{n+1} \approx x$ for some $n \geq 1$.

\begin{lemma} \label{L: comm CR}
Let~$\bfV$ be any subvariety of~$\bbJ$. 
Then
\begin{enumerate}[\rm(i)]
\item $\bfV$ is completely regular if and only if $\bfV \subseteq \bfJ_1$\textup;
\item $\bfV$ is commutative if and only if~$xy$ is not an isoterm for~$\bfV$\textup;
\item $\bfV$ is commutative implies that~$\bfV$ is Cross and $\bfV = \var\{ \text{\ref{id: aperiodic}}_n,\, xy \approx yx \}$ for some $n \geq 0$.
\end{enumerate}
\end{lemma}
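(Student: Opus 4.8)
I would prove the three parts in the order (i), (ii), (iii), using part~(i) inside the proof of part~(ii).

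For part~(i), the backward implication is immediate: every monoid in $\bfJ_1$ is idempotent, hence a union of its one-element subgroups, hence completely regular. For the forward implication I would show that a completely regular $\bfV\subseteq\bbJ$ satisfies $x^2\approx x$: by Lemma~\ref{L: subvarieties of J}(ii) it satisfies $x^{m+1}\approx x^m$ for some $m$ and so is periodic, whence by the criterion recalled before that lemma it also satisfies $x^{p+1}\approx x$ for some $p\geq1$; combining these two identities by an exponent-chasing argument gives $\bfV\models x\approx x^m$ and then $\bfV\models x^2\approx x$. Consequently every monoid in $\bfV$ is a $J$-trivial band, hence a semilattice monoid, hence a member of $\bfJ_1$.

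For part~(ii), one implication is trivial, since a commutative variety satisfies the nontrivial identity $xy\approx yx$. For the converse, I would take an identity $xy\approx\bfw$ satisfied by $\bfV$ with $\bfw\neq xy$ and, after disposing of the trivial case, assume $\bfV$ nontrivial. A short substitution argument then shows $\content(\bfw)=\{x,y\}$: if $\bfw$ involved a variable $z\notin\{x,y\}$, substituting $1$ for all other variables would give an identity $1\approx z^k$ with $k\geq1$; and if $x$ or $y$ were absent from $\bfw$, substituting $1$ for the other would give $y\approx1$ or $x\approx1$; in either case aperiodicity forces $\bfV$ to be trivial. Substituting $x\mapsto xy$, $y\mapsto1$ into $xy\approx\bfw$ then gives $\bfV\models xy\approx(xy)^{|\bfw|_x}$, and symmetrically $\bfV\models xy\approx(xy)^{|\bfw|_y}$. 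If $|\bfw|_x\geq2$ or $|\bfw|_y\geq2$, a further substitution of $1$ for the other variable yields $x\approx x^k$ with $k\geq2$, which with aperiodicity gives $\bfV\models x^2\approx x$, so $\bfV\subseteq\bfJ_1$ by part~(i) and is commutative. Otherwise $|\bfw|_x=|\bfw|_y=1$, so $\bfw$, having content $\{x,y\}$ with a single occurrence of each variable, equals $yx$, and $\bfV\models xy\approx yx$.

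For part~(iii), let $n$ be least with $\bfV\models x^{n+1}\approx x^n$; such $n$ exists by Lemma~\ref{L: subvarieties of J}(ii). If $n=0$ then $\bfV=\bftrivial=\var\{x\approx1\}$, which is Cross. If $n\geq1$, minimality of $n$ yields a monoid $M\in\bfV$ and an element $a\in M$ with $a^{n-1}\neq a^n$; since $M$ lies in $\bbJ$ and hence is aperiodic, $\langle a\rangle$ is a finite aperiodic monogenic monoid, and the relations $a^{n+1}=a^n$ and $a^{n-1}\neq a^n$ force $\langle a\rangle$ to be the $(n+1)$-element monoid $C_n=\{1,a,\dots,a^n\mid a^{n+1}=a^n\}$, which therefore belongs to $\bfV$. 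On the other hand $\bfV\subseteq\var\{x^{n+1}\approx x^n,\,xy\approx yx\}$, and this latter variety is generated by $C_n$: its relatively free monoid over any set $X$ of generators is the submonoid of $C_n^X$ consisting of the finitely supported tuples, under coordinatewise addition truncated at $n$, and this submonoid lies in $\var(C_n)$. Hence $\bfV=\var\{x^{n+1}\approx x^n,\,xy\approx yx\}$, which is finitely based by definition and finitely generated by $C_n$; moreover it is a proper subvariety of the variety $\mathbf{Com}$ of all commutative monoids, since the free monogenic monoid belongs to $\mathbf{Com}$ but violates $x^{n+1}\approx x^n$. As $\mathbf{Com}$ is almost Cross~\cite{Hea68}, every proper subvariety of it is Cross; in particular $\bfV$ is Cross.

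I expect part~(iii) to carry the real weight. The delicate point there is to describe the relatively free monoids of $\var\{x^{n+1}\approx x^n,\,xy\approx yx\}$ precisely enough to conclude that the single monoid $C_n$ generates it---this is exactly what forces every commutative subvariety of $\bbJ$ to be one of the chain varieties $\var\{x^{n+1}\approx x^n,\,xy\approx yx\}$---together with the correct appeal to the almost-Cross property of $\mathbf{Com}$ for the Cross conclusion. Parts~(i) and~(ii), by contrast, reduce to routine substitution bookkeeping plus the standard fact that a $J$-trivial band is a semilattice.
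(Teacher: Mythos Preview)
Your proof is correct and more self-contained than the paper's. For part~(i) the paper derives $xy\approx yx$ directly from the eventual-commutativity identity $\text{\ref{id: eventually}}_k$ (once $x^2\approx x$ is in hand, $xy\approx(xy)^k\approx(yx)^k\approx yx$), whereas you appeal instead to the structural fact that a $J$-trivial band is a semilattice; both are standard and either works. For part~(ii) the paper simply cites Gusev and Sapir~\cite[Lemma~2.7]{GusSap22} for the dichotomy ``either commutative or idempotent'' and then invokes part~(i), while you carry out the elementary substitution argument in full; your route avoids the external reference at the cost of a few more lines. For part~(iii) the paper defers entirely to Head's classification of commutative monoid varieties~\cite{Hea68}, whereas you reprove the relevant fragment by realizing the free objects of $\var\{\text{\ref{id: aperiodic}}_n,\,xy\approx yx\}$ inside powers of~$C_n$ and then invoking the almost-Cross property of $\mathbf{Com}$; this makes the argument self-contained but is precisely what Head's result supplies. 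In short, your approach trades citations for explicit computation, which is pedagogically cleaner though not logically new.
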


\begin{proof}
By Lemma~\ref{L: subvarieties of J}(ii), the variety~$\bfV$ satisfies the identities $\{ \text{\ref{id: aperiodic}}_k, \text{\ref{id: eventually}}_k \}$ for some $k \geq 2$.

(i) Suppose that~$\bfV$ is completely regular and so satisfies the identity $\sigma: x^{n+1} \approx x$ for some $n \geq 1$.
Then~$\bfV$ satisfies~\ref{id: aperiodic}$_1$ because \[ \bfV \models x \stackrel{\sigma}{\approx} x^{kn+1} \stackrel{\text{\ref{id: aperiodic}}_k}{\approx} x^{kn+2} \stackrel{\sigma}{\approx} x^2 \vdash \text{\ref{id: aperiodic}}_1, \] and~$\bfV$ satisfies \ref{id: eventually}$_1$ because \[ \bfV \models xy \stackrel{\text{\ref{id: aperiodic}}_1}{\approx} (xy)^k \stackrel{\text{\ref{id: eventually}}_k}{\approx} (yx)^k \stackrel{\text{\ref{id: aperiodic}}_1}{\approx} yx. \]
Therefore, the inclusion $\bfV \subseteq \bfJ_1$ holds.
Conversely, every idempotent monoid, and so every subvariety of~$\bfJ_1$, is completely regular.

(ii) If~$\bfV$ is commutative, then it satisfies $xy \approx yx$ and so~$xy$ is not an isoterm for~$\bfV$.
Conversely, suppose that~$xy$ is not an isoterm for~$\bfV$.
Then it is easily shown that~$\bfV$ is either commutative or idempotent; see, for instance, Gusev and Sapir \cite[Lemma~2.7]{GusSap22}.
If~$\bfV$ is idempotent, then it is completely regular and so is commutative by part~(i).

(iii) This follows from the complete description of all commutative varieties~\cite{Hea68}.
\end{proof}

\subsection{Rees quotients of free monoids}

For any set $\scrW \subseteq \scrA^*$, let $\monRq\,\scrW$ denote the Rees quotient of~$\scrA^*$ over the ideal of all words that are not factors of any word in~$\scrW$.
Equivalently, $\monRq\,\scrW$ is the monoid that consists of every factor of every word in~$\scrW$, together with a zero element~$0$, with binary operation~$\cdot$ given by \[ \bfu \cdot \bfv = \begin{cases} \bfu\bfv & \mbox{if $\bfu\bfv$ is a factor of some word in~$\scrW$}, \\ 0 & \mbox{otherwise}. \end{cases} \]
If there is a uniform upper bound~$n$ on the length of the words in~$\scrW$, then $\monRq\,\scrW$ satisfies the identities $\{ \text{\ref{id: aperiodic}}_{n+1}, \text{\ref{id: eventually}}_{n+1} \}$ and so is {\Jtrivial}; in particular, $\monRq\,\scrW$ is {\Jtrivial} for all finite $\scrW \subseteq \scrA^*$.

Let $\bfRq\,\scrW$ denote the variety generated by $\monRq\,\scrW$. 
Some easy examples of Rees quotients of~$\scrA^*$ that will appear later in this article are 
\begin{align*}
\monRq\{x^n\} & = \langle x,1 \,|\, x^{n+1} = 0 \rangle = \{ 0,\, x,\, x^2,\, \ldots,\, x^n,\, 1\}, \\ 
\monRq\{xy\} & = \langle x,y,1 \,|\, x^2=y^2=yx=0 \rangle = \{ 0,\, x,\, y,\, xy,\, 1\}, \\
\monRq\{ xhx\} & = \langle x,h,1 \,|\, x^2=h^2=hxh=0\rangle = \{ 0,\, x,\, h,\, xh,\, hx,\, xhx,\, 1\}.
\end{align*}
Since the monoid~$\monRq\{x^n\}$ is commutative, it is easily shown that \[ \bfRq\{x^n\} = \var\{ x^{n+2} \approx x^{n+1}, \; xy \approx yx \}; \] see Almeida \cite[Corollary~6.1.5]{Alm94}.
In particular, $\bfRq\{x^0\} = \bfRq\{1\} = \bfJ_1$ coincides with the variety of semilattice monoids.
The other two varieties can be found in Jackson~\cite{Jac05}: \begin{align*} \bfRq\{xy\} & = \var\{ xyx \approx x^2y, \; xyx \approx yx^2 \}, \\ \bfRq\{xhx\} & = \var\bigg\{ \begin{array}{l} x^2h \approx hx^2, \; xhxtx \approx x^2ht, \; xyhxty \approx yxhxty, \\ xhxyty \approx xhyxty, \; xhytxy \approx xhytyx \end{array} \bigg\}. \end{align*}

\begin{lemma}[{Jackson \cite[Lemma~3.3]{Jac05}}] \label{L: isoterm}
For any variety~$\bfV$ and any set~$\scrW$ of words\textup, the inclusion $\bfRq\,\scrW \subseteq \bfV$ holds if and only if every word in~$\scrW$ is an isoterm for~$\bfV$.
\end{lemma}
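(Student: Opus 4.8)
The plan is to prove the two implications separately, after recording two routine observations about a variety $\bfV$: (a) any factor of a word that is an isoterm for $\bfV$ is again an isoterm for $\bfV$, since a nontrivial identity $\bfa \approx \bfb$ satisfied by $\bfV$ yields the nontrivial identity $\bfp\bfa\bfq \approx \bfp\bfb\bfq$ satisfied by $\bfV$; and (b) if $\bfV \models \bfu \approx \bfv$, then $\bfV \models \varphi\bfu \approx \varphi\bfv$ for every substitution $\varphi\colon \scrA \to \scrA^*$, because precomposing $\varphi$ with any substitution into a monoid of~$\bfV$ is again such a substitution. I would also use that $\bfV$ is exactly the class of all monoids satisfying every identity of~$\bfV$ (Birkhoff's theorem~\cite{Bir35}), so that proving $\monRq\,\scrW \in \bfV$ amounts to checking that $\monRq\,\scrW$ satisfies every identity satisfied by~$\bfV$.

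For the forward implication, assume $\bfRq\,\scrW \subseteq \bfV$, fix $\bfw \in \scrW$, and suppose $\bfV \models \bfw \approx \bfw'$; the goal is $\bfw = \bfw'$. Since $\monRq\,\scrW \in \bfV$ satisfies $\bfw \approx \bfw'$, I would evaluate both sides under the substitution $\varphi$ fixing every variable in $\content(\bfw)$ and sending all others to~$0$. As every prefix of~$\bfw$ is a factor of $\bfw \in \scrW$, no partial product collapses and $\varphi\bfw = \bfw \neq 0$ in $\monRq\,\scrW$; hence $\varphi\bfw' = \bfw \neq 0$ as well. A variable of~$\bfw'$ outside $\content(\bfw)$ would force the product to~$0$, so $\content(\bfw') \subseteq \content(\bfw)$ and $\varphi\bfw'$ is just the class of the word~$\bfw'$; since this class equals~$\bfw$ and is nonzero, $\bfw' = \bfw$ as words.

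For the reverse implication, assume every word in~$\scrW$ is an isoterm for~$\bfV$; the plan is to show that $\monRq\,\scrW$ satisfies an arbitrary identity $\bfu \approx \bfv$ of~$\bfV$. First I would note that necessarily $\content(\bfu) = \content(\bfv)$: otherwise, say $x \in \content(\bfu) \setminus \content(\bfv)$, and substituting $x$ by a variable~$y$ occurring in some word of~$\scrW$ (an isoterm by~(a)) and all other variables by~$1$ gives $\bfV \models y^k \approx 1$ with $k = |\bfu|_x \geq 1$, whence $\bfV \models y^{k+1} \approx y$, contradicting that~$y$ is an isoterm. (If no word of~$\scrW$ has positive length, then $\bfRq\,\scrW$ is trivial or is generated by the two-element semilattice $\monRq\{1\}$, and the same substitution argument, now using that~$1$ is an isoterm for~$\bfV$, settles it.) Now, given a substitution $\psi\colon \scrA \to \monRq\,\scrW$, I may assume $\psi\bfu \neq 0$ by symmetry (if $\psi\bfu = \psi\bfv = 0$ there is nothing to prove). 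Then no variable of~$\bfu$, and hence by content equality no variable of~$\bfv$, is sent to~$0$, so each is sent to a word that is a factor of some word in~$\scrW$; these words define a substitution $\widehat\psi\colon \scrA \to \scrA^*$, and since $\psi\bfu \neq 0$ the product does not collapse, so $\psi\bfu$ equals in $\monRq\,\scrW$ the word $\widehat\psi\bfu$, which is therefore a factor of some word in~$\scrW$. By~(a) that factor is an isoterm for~$\bfV$; by~(b), $\bfV \models \widehat\psi\bfu \approx \widehat\psi\bfv$; hence $\widehat\psi\bfu = \widehat\psi\bfv$ as words. Consequently $\psi\bfv$ equals the word $\widehat\psi\bfv = \widehat\psi\bfu$ in $\monRq\,\scrW$, i.e.\ $\psi\bfv = \psi\bfu$. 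This gives $\monRq\,\scrW \in \bfV$, hence $\bfRq\,\scrW \subseteq \bfV$.

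I expect the one genuine difficulty to be in the reverse implication: namely, reducing membership $\monRq\,\scrW \in \bfV$ to the verification of identities and then excluding the ``mixed'' case in which an evaluated identity gives~$0$ on one side but not the other. Both the content-equality observation (which is also what lets $\widehat\psi$ act on~$\bfv$) and the combination of~(a) and~(b) are there precisely to deal with this; the forward implication and the observations~(a),~(b) are routine.
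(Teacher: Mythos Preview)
Your argument is correct. The paper does not actually prove this lemma; it simply cites Jackson~\cite[Lemma~3.3]{Jac05} for the result, so there is no in-paper proof to compare against. Your direct verification via Birkhoff's theorem---checking that $\monRq\,\scrW$ satisfies every identity of~$\bfV$ by lifting a nonzero evaluation back to a word substitution and invoking the isoterm property of the resulting factor---is exactly the standard argument and is essentially what Jackson's original proof does. The edge cases (empty $\scrW$, or $\scrW$ containing only the empty word) and the content-equality step are handled correctly, if a bit tersely.
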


\begin{lemma} \label{L: xyx not isoterm}
\begin{enumerate}[\rm(i)]
\item Suppose that~$\bfV$ is any subvariety of~$\bbJ$ such that $\monRq\{xhx\} \notin \bfV$.
Then~$\bfV$ is a subvariety of one of the following varieties\textup: 
\begin{equation}
\bfJ_2\{\eqref{id: xxyx=xyx}\}, \quad \bfJ_2\{\eqref{id: xyxx=xyx}\}, \quad \bfJ_n\{xhx \approx x^2h\}, \quad \bfJ_n\{xhx \approx hx^2\}, \quad n \geq 2. \label{D: xyx not isoterm}
\end{equation}
\item For each $n \geq 1$\textup, the varieties $\var\{ \text{\ref{id: aperiodic}}_n,\, xhx \approx x^2h \}$ and $\var\{ \text{\ref{id: aperiodic}}_n,\, xhx \approx hx^2 \}$ are Cross.
\end{enumerate}
\end{lemma}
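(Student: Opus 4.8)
The plan is to convert the hypothesis into a nontrivial identity and then dissect it. First, since $\monRq\{xhx\}\notin\bfV$, Lemma~\ref{L: isoterm} (applied to $\scrW=\{xhx\}$) shows that $xhx$ is not an isoterm for~$\bfV$, so $\bfV\models xhx\approx\bfw$ for some $\bfw\ne xhx$. I would dispose of the trivial~$\bfV$ at once and then use that any nontrivial subvariety of~$\bbJ$ contains the two-element semilattice monoid, so its identities are content-preserving and $\content(\bfw)=\{x,h\}$; also~$\bfV$ is aperiodic by Lemma~\ref{L: subvarieties of J}(ii). Next I would split on whether~$h$ is simple in~$\bfw$. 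If not, then $x\mapsto 1$ gives $\bfV\models h\approx h^m$ with $m\ge 2$, hence $\bfV\models x^2\approx x$, so $\bfV\subseteq\bfJ_1\subseteq\bfJ_2\{\eqref{id: xxyx=xyx}\}$ by Lemma~\ref{L: comm CR}(i). If~$h$ is simple, then $\bfw=x^ihx^j$ with $i+j\ge 1$, $(i,j)\ne(1,1)$, and $h\mapsto 1$ gives $\bfV\models x^2\approx x^{i+j}$: for $i+j=1$ this is $x^2\approx x$ again; for $i+j=2$ the pair is $(2,0)$ or $(0,2)$, so~$\bfV$ satisfies $xhx\approx x^2h$ or $xhx\approx hx^2$; for $i+j\ge 3$ aperiodicity forces $\bfV\models x^2\approx x^3$, the right side collapses to $x^{\min(i,2)}hx^{\min(j,2)}$, and a short inspection of the five resulting exponent patterns shows that~$\bfV$ satisfies one of $xhx\approx x^2h$, $xhx\approx hx^2$, \eqref{id: xxyx=xyx}, \eqref{id: xyxx=xyx} (e.g.\ the pattern $(2,2)$ yields $x^2hx\approx x\cdot x^2hx^2\approx x^2hx^2\approx xhx$). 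Finally I would note that $\bfV\subseteq\bfJ_N$ for some $N\ge 2$ by Lemma~\ref{L: subvarieties of J}(ii), so the instances satisfying $xhx\approx x^2h$ (resp.\ $xhx\approx hx^2$) fall inside $\bfJ_N\{xhx\approx x^2h\}$ (resp.\ $\bfJ_N\{xhx\approx hx^2\}$), whereas \eqref{id: xxyx=xyx} and \eqref{id: xyxx=xyx} each imply $\bfw^3\approx\bfw^2$ for every word~$\bfw$ (put $x\mapsto\bfw$, $h\mapsto 1$) and hence, via eventual commutativity, $x^3\approx x^2$ and $(xy)^2\approx(yx)^2$, landing~$\bfV$ in $\bfJ_2\{\eqref{id: xxyx=xyx}\}$ or $\bfJ_2\{\eqref{id: xyxx=xyx}\}$. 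In every case~$\bfV$ lies in one of the varieties in~\eqref{D: xyx not isoterm}. No single step is hard; the main chore is the bookkeeping of the case split.

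\textbf{Part~(ii).} By duality it is enough to show that $\variety{A}_n:=\var\{x^{n+1}\approx x^n,\ xhx\approx x^2h\}$ is Cross, since then so is $\overleftarrow{\variety{A}_n}=\var\{x^{n+1}\approx x^n,\ xhx\approx hx^2\}$. It is finitely based by construction, so I would concentrate on local finiteness and smallness. For local finiteness, observe that $\variety{A}_n\models x\bfw x\approx x^2\bfw$ for every word~$\bfw$ (put $h\mapsto\bfw$), whence every word is $\variety{A}_n$-equivalent to a normal form $z_1^{e_1}\cdots z_r^{e_r}$ with the~$z_i$ distinct, listed by first occurrence, and $1\le e_i\le n$ (repeatedly drag occurrences of the leftmost variable to the front via $x\bfw x\approx x^2\bfw$, cap its exponent via $x^{n+1}\approx x^n$, and recurse); since there are finitely many such words on~$k$ variables, the~$k$-generated free monoid of~$\variety{A}_n$ is finite, so~$\variety{A}_n$ is locally finite and $F:=F_2(\variety{A}_n)$ is a finite monoid.

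For smallness the plan rests on one claim: for every subvariety $\bfV\subseteq\variety{A}_n$ and every $k\ge 3$, the~$k$ homomorphisms $F_k(\bfV)\to F_{k-1}(\bfV)$ that send one free generator to~$1$ and fix the rest are jointly injective. Granting it, $F_k(\bfV)$ embeds subdirectly in a power of $F_{k-1}(\bfV)$, so an easy induction gives $F_k(\bfV)\in\var(F_2(\bfV))$ for all~$k$ and hence $\bfV=\var(F_2(\bfV))$; as $\bfV$ runs over $\frakL(\variety{A}_n)$, the monoid $F_2(\bfV)$ is a homomorphic image of the fixed finite monoid~$F$, so it realizes only finitely many isomorphism types, and therefore $\frakL(\variety{A}_n)$ is finite. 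Being finitely based, locally finite and small,~$\variety{A}_n$ is then finitely generated by Lemma~\ref{L: LF FG max}(ii), hence Cross.

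To prove the claim I would argue by contradiction: pick normal-form words $\bfu\ne\bfv$ representing distinct elements of $F_k(\bfV)$, so $\bfV\not\models\bfu\approx\bfv$, and suppose deleting any single variable identifies them in~$\bfV$. Deleting variables one at a time (each step is a substitution into an already valid identity) shows that every projection of $\bfu,\bfv$ onto a proper subset of the variables holds in~$\bfV$; projecting onto one variable yields identities $x^d\approx x^c$, and projecting onto a pair of variables whose relative orders in $\bfu$ and $\bfv$ differ yields, after reconciling exponents, commuting identities $x^dy^e\approx y^ex^d$. Using the one-variable identities I would rewrite~$\bfu$ so that its block exponents agree with those of~$\bfv$, and then bubble-sort the blocks into the order of~$\bfv$: every adjacent transposition performed is of an out-of-order pair, hence is licensed by the matching commuting identity, so the rewriting stays inside~$\bfV$ and carries~$\bfu$ to~$\bfv$, a contradiction. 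The load-bearing step is exactly this injectivity argument (equivalently, the subdirect decomposition of the free objects of subvarieties of~$\variety{A}_n$); the existence of normal forms and the fact that bubble sort swaps only out-of-order pairs are routine.
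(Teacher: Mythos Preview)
For Part~(i) your case analysis is essentially the paper's, differing only cosmetically: you invoke the two-element semilattice to force $\content(\bfw)=\{x,h\}$ at the outset, while the paper folds the extraneous-variable and low-multiplicity cases into a single completely-regular branch; both then split on the exponent pattern $(i,j)$ in $\bfw=x^ihx^j$ and derive $x^3\approx x^2$ (hence $(xy)^2\approx(yx)^2$ via eventual commutativity) in the $i+j\ge3$ case. For Part~(ii) the approaches diverge: the paper gives no argument and simply cites Lee~\cite[Corollary~3.6]{Lee14rm}, whereas you supply a self-contained proof via normal forms and a subdirect decomposition $F_k(\bfV)\hookrightarrow F_{k-1}(\bfV)^k$ for $k\ge3$. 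Your injectivity step is sound---iterated single-variable deletions recover all proper projections, one-variable projections reconcile block exponents, two-variable projections yield commutation identities exactly for the pairs whose relative order differs in~$\bfu$ and~$\bfv$, and bubble sort transposes only such inversions, so every swap is licensed---forcing $\bfV=\var(F_2(\bfV))$; smallness then follows by counting quotients of the finite monoid $F_2(\variety{A}_n)$. This buys a fully elementary proof independent of the external reference, at the cost of spelling out the combinatorics that the citation hides.
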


\begin{proof}
(i) By Lemma~\ref{L: subvarieties of J}(ii), the variety~$\bfV$ satisfies the identities $\{ \text{\ref{id: aperiodic}}_n, \text{\ref{id: eventually}}_n \}$ for some $n \geq 2$.
By Lemma~\ref{L: isoterm}, the assumption $\monRq\{xhx\} \notin \bfV$ implies that~$\bfV$ satisfies a nontrivial identity $\sigma: xhx \approx \bfw$ for some $\bfw \in \scrA^*$.
Let $p = |\bfw|_x$ and $q = |\bfw|_h$.
There are two cases.

\medskip

\noindent\textsc{Case~1}: $p \leq 1$ or $q \neq 1$ or there exists $z \in \scrA \backslash\{x,h\}$ such that $|\bfw|_z = r \geq 1$. 
Then~$\bfV$ satisfies $x^2 \approx x^p$ or $h \approx h^q$ or $1 \approx z^r$, whence~$\bfV$ is completely regular.
Therefore, the inclusion $\bfV \subseteq \bfJ_1$ holds by Lemma~\ref{L: comm CR}(i), so that~$\bfV$ is a subvariety of all the varieties in~\eqref{D: xyx not isoterm}.

\medskip

\noindent\textsc{Case~2}: $p \geq 2$ and $q = 1$ with $\content(\bfw) = \{ x,y\}$. 
Then $\bfw = x^{p_1}hx^{p_2}$ for some $p_1,p_2 \geq 0$ such that $p_1+p_2 = p \geq 2$ and $(p_1,p_2) \neq (1,1)$.
If $p = 2$, then $(p_1,p_2) \in \{ (2,0),(0,2)\}$, so that either $\bfV \subseteq \bfJ_n\{xhx \approx x^2h\}$ or $\bfV \subseteq \bfJ_n\{xhx \approx hx^2\}$.
Therefore, suppose that $p_1+p_2 = p \geq 3$, so that either $p_1 \geq 2$ or $p_2 \geq 2$.
Then~$\bfV$ satisfies the identity $\tau: x^2 \approx x^{2+k}$ with $k = p-2$ and so also the identities $\{ \text{\ref{id: aperiodic}}_2, \text{\ref{id: eventually}}_2 \}$ because
\begin{align*}
\bfV & \models x^2 \stackrel{\tau}{\approx} x^{2+nk} \stackrel{\text{\ref{id: aperiodic}}_n}{\approx} x^{3+nk} \stackrel{\tau}{\approx} x^3 \vdash \text{\ref{id: aperiodic}}_2 \\
\text{and } \; \bfV & \models (xy)^2 \stackrel{\text{\ref{id: aperiodic}}_2}{\approx} (xy)^n \stackrel{\text{\ref{id: eventually}}_n}{\approx} (yx)^n \stackrel{\text{\ref{id: aperiodic}}_2}{\approx} (yx)^2 \vdash \text{\ref{id: eventually}}_2.
\end{align*}
Now if $p_1 \geq 2$, then \[ \bfV \models xhx \stackrel{\sigma}{\approx} x^{p_1}hx^{p_2} \stackrel{\text{\ref{id: aperiodic}}_2}{\approx} x x^{p_1}hx^{p_2} \stackrel{\sigma}{\approx} xxhx \vdash \eqref{id: xxyx=xyx}; \]
similarly, if $p_2 \geq 2$, then $\bfV \models \eqref{id: xyxx=xyx}$.
Consequently, either $\bfV \subseteq \bfJ_2\{\eqref{id: xxyx=xyx}\}$ or $\bfV \subseteq \bfJ_2\{\eqref{id: xyxx=xyx}\}$.

\medskip

(ii) This is Lee \cite[Corollary~3.6]{Lee14rm}.
\end{proof}

\subsection{The variety \texorpdfstring{$\bfO \cap \bfJ_2$}{O J2} } \label{subsec: O}

Let~$\bfO$ denote the variety defined by the identities \begin{equation} xhxyxty \approx xhyxty, \quad xhytxyx \approx xhytyx. \label{id: O} \end{equation}
Since the identity~\eqref{id: xyxx=xyx} is directly deducible from the first identity in~\eqref{id: O}, the variety~$\bfO$ is locally finite by Lemma~\ref{L: LF FG max}(i).

The variety $\bfO \cap \bfJ_2$ plays an important role later in this article in the description of some Cross varieties.
The main goal of this subsection is to show that every noncommutative subvariety of $\bfO \cap \bfJ_2$ can be defined by identities of very restricted types (Proposition~\ref{P: subvarieties of O}).
Some Cross subvarieties of $\bfO \cap \bfJ_2$ will also be identified.

\begin{lemma} \label{L: O sub}
\begin{enumerate}[\rm(i)]
\item Every subvariety of~$\bfO$ for which~$xy$ is an isoterm is defined by~\eqref{id: O} and finitely many identities from the following sets\textup:
\begin{align} 
& \Bigg\{ \; x^{e_0} \prod_{i=1}^m (h_ix^{e_i}) \approx x^{f_0} \prod_{i=1}^m (h_ix^{f_i}) \; \; \Bigg| \begin{array}{l} e_0,f_0, e_1,f_1,\ldots,e_m,f_m \geq 0; \\[0.05in] \sum_{i=0}^me_i,\, \sum_{i=0}^mf_i \geq 2;\; m \geq 0 \end{array} \Bigg\}; \label{id: O sub xhxhx} \\
& \left\{ \; \bfh\bfp\bfc \approx \bfh\bfq\bfc \; \; \left| 
\begin{array}{l} 
\bfh \in \{ 1,\, yh\}; \; \bfp,\bfq \in \{x,y\}^+; \\[0.05in] 
\bfc \in \big\{ 1,\, txy,\, \prod_{i=1}^m (t_i\bfc_i) \,\big|\, \bfc_i \in \{ 1,x,y\},\, m \geq 1 \big\}; \\[0.05in] 
1 \leq |\bfp|_x = |\bfq|_x,\, |\bfp|_y = |\bfq|_y \leq 2; \\[0.05in] 
|\bfh\bfp\bfc|_x = |\bfh\bfq\bfc|_x,\, |\bfh\bfp\bfc|_y = |\bfh\bfq\bfc|_y \geq 2 
\end{array} 
\right. \right\}. \label{id: O sub yhpc=yhqc}
\end{align}
\item Every noncommutative subvariety of $\bfO \cap \bfJ_2$ is defined by $\{ \eqref{id: O}, \text{\ref{id: aperiodic}}_2, \text{\ref{id: eventually}}_2 \} \cup \Sigma_1 \cup \Sigma_2$ for some $\Sigma_1 \subseteq \eqref{id: O sub xhxhx}$ and $\Sigma_2 \subseteq \eqref{id: O sub yhpc=yhqc}$.
\end{enumerate}
\end{lemma}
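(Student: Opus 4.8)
The plan is to reduce part~(i) to a single combinatorial principle---that modulo \eqref{id: O} a word is reconstructible from its two-variable projections---and then to read off the two permitted identity shapes and bound their number. Fix a subvariety $\bfV$ of $\bfO$ for which $xy$ is an isoterm. First I would record two standing consequences of the hypotheses. From the isoterm condition (via Lemma~\ref{L: isoterm}) it follows in the usual way, by substituting $1$ for all but one or two variables, that whenever $\bfV \models \bfu \approx \bfv$ the words $\bfu$ and $\bfv$ have the same content and the same left-to-right list of simple variables; I will call this common list the \emph{skeleton} and treat it as rigid. From \eqref{id: O} it follows, as already noted for \eqref{id: xyxx=xyx} and hence for $x^3 \approx x^2$, that every maximal block of occurrences of a single non-simple variable lying between two consecutive skeleton letters (or a skeleton letter and a word end) may be shortened to length at most $2$; the two identities \eqref{id: O} additionally delete occurrences of a non-simple variable that become redundant once a second non-simple variable has crossed a simple variable. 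These reductions define an $\bfO$-canonical form.

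The heart of the argument is the following \emph{pairwise-reconstruction} claim: if $\bfu$ and $\bfv$ have a common skeleton and content, then $\{\eqref{id: O}\}$ together with all identities obtained from $\bfu \approx \bfv$ by substituting $1$ for every non-simple variable except one or two chosen ones deduces $\bfu \approx \bfv$. Granting this, part~(i) is laid out as follows: any identity $\bfu \approx \bfv$ of $\bfV$ has matching skeleton and content by the previous paragraph, each of the listed substitution instances is again satisfied by $\bfV$ and involves at most two non-simple variables, so $\bfV$ is defined by \eqref{id: O} together with the set of all of its own identities in at most two non-simple variables. I expect this reconstruction claim to be the principal obstacle: it is precisely the assertion that \eqref{id: O} carries no genuine three-or-more-variable interaction, so that $\bfu$ can be transformed into $\bfv$ by moves each touching at most two non-simple variables at once. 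The engine for such moves is the asymmetric pair \eqref{id: O} (which yields $xhx^2 \approx xhx$ but not its mirror image), and the delicate point is the bookkeeping needed to keep the untouched non-simple variables inert while a chosen pair is being rearranged; I would carry this out by induction on the number of non-simple variables, peeling off one variable at a time against the rigid skeleton.

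With the reduction in place I would classify the one- and two-non-simple-variable identities of $\bfV$ against the canonical form. A one-non-simple-variable identity, with non-simple variable $x$ and skeleton $h_1 \cdots h_m$, reduces to the displayed shape in \eqref{id: O sub xhxhx}, the flanking exponents lying in $\{0,1,2\}$ and the total number of $x$'s being at least $2$ on each side (a simple $x$ being excluded by the isoterm condition). A two-non-simple-variable identity in $x,y$ is placed in $\bfO$-canonical form on both sides; the shared skeleton and content force the prefix and the suffix to coincide, confining the discrepancy to a bounded central factor that, after recording the forced pattern by which $x$ and $y$ first enter, takes exactly the normal shape $\bfh\bfp\bfc \approx \bfh\bfq\bfc$ of \eqref{id: O sub yhpc=yhqc}, with $\bfh \in \{1, yh\}$, with $\bfp,\bfq \in \{x,y\}^+$ carrying equal and at-most-two occurrences of each variable, and with $\bfc$ among the listed suffix types. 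Thus every relevant identity of $\bfV$ is equivalent, modulo \eqref{id: O}, to identities drawn from \eqref{id: O sub xhxhx} and \eqref{id: O sub yhpc=yhqc}.

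It remains to pass to a finite axiomatization and to deduce~(ii). Local finiteness of $\bfO$ (Lemma~\ref{L: LF FG max}(i), applicable because \eqref{id: xyxx=xyx} is a consequence of \eqref{id: O}) bounds the per-variable multiplicities, so after fixing the skeleton length there are only finitely many candidate identities of either shape; to bound the skeleton length itself I would show, by a further induction on the number of simple variables, that a long identity from \eqref{id: O sub xhxhx} or \eqref{id: O sub yhpc=yhqc} is deducible modulo \eqref{id: O} from its restrictions to few simple variables applied inside larger contexts. Consequently a finite $\Sigma_1 \subseteq \eqref{id: O sub xhxhx}$ and $\Sigma_2 \subseteq \eqref{id: O sub yhpc=yhqc}$ suffice, proving~(i). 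Finally, part~(ii) is immediate: a noncommutative $\bfV \subseteq \bfO \cap \bfJ_2$ has $xy$ as an isoterm by Lemma~\ref{L: comm CR}(ii), so (i) gives $\bfV = \var\big(\{\eqref{id: O}\} \cup \Sigma_1 \cup \Sigma_2\big)$; adjoining $\text{\ref{id: aperiodic}}_2$ and $\text{\ref{id: eventually}}_2$, which hold in $\bfV$ because $\bfV \subseteq \bfJ_2$, leaves the variety unchanged and yields the stated defining set $\{\eqref{id: O}, \text{\ref{id: aperiodic}}_2, \text{\ref{id: eventually}}_2\} \cup \Sigma_1 \cup \Sigma_2$.
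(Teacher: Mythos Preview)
The paper does not actually prove part~(i): its entire argument is the sentence that the result ``can be extracted from Gusev and Sapir \cite[proof of Proposition~3.1]{GusSap22}''. Your proposal is therefore an attempt to supply a self-contained proof where the paper defers to an external source; for part~(ii), your derivation from~(i) via Lemma~\ref{L: comm CR}(ii) is exactly the paper's argument.

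Your outline for~(i) has the right architecture --- reduction to one- and two-non-simple-variable projections is indeed the mechanism behind the cited result --- but two of its load-bearing steps remain promissory notes. The ``pairwise reconstruction'' claim, which you rightly flag as the principal obstacle, is given only a one-line induction sketch; making it rigorous requires a substantial case analysis of how~\eqref{id: O} lets one rearrange a chosen non-simple variable against the skeleton while keeping the others inert, and the asymmetry of~\eqref{id: O} (it yields~\eqref{id: xyxx=xyx} but not~\eqref{id: xxyx=xyx}) makes this genuinely delicate. Separately, the passage from a generic two-non-simple-variable identity in canonical form to the precise shape~\eqref{id: O sub yhpc=yhqc} is not justified: the constraints $\bfh \in \{1,\,yh\}$ and the listed suffix types for~$\bfc$ encode a specific normalization (which non-simple variable appears first, and the eventual pattern of $x$'s and $y$'s between consecutive simple letters forced by~\eqref{id: O}) that does not follow from ``prefix and suffix coincide'' alone. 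Pinning both of these down is a nontrivial share of the work in the Gusev--Sapir argument you would in effect be reproducing.
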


\begin{proof}
(i) This can be extracted from Gusev and Sapir \cite[proof of Proposition~3.1]{GusSap22}. 

(ii) This follows from Lemma~\ref{L: comm CR}(ii) and part~(i).
\end{proof}

\begin{lemma} \label{L: Oc sub}
For any $\Sigma_1 \subseteq \eqref{id: O sub xhxhx}$\textup, the equivalence $\eqref{id: xyxx=xyx} \cup \Sigma_1 \sim \eqref{id: xyxx=xyx} \cup \Sigma$ holds for some possibly empty set~$\Sigma$ that consists of some of the following identities\textup:
\begin{equation}
xhx \approx hx^2, \quad x^2hx \approx x^2h, \quad x^2hxtx \approx x^2htx, \quad \eqref{id: xxyx=xyx}. \label{id: Oc sub}
\end{equation}
\end{lemma}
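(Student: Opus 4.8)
The statement is about simplifying an arbitrary subset $\Sigma_1$ of the "block" identities in $\eqref{id: O sub xhxhx}$ modulo the identity $\eqref{id: xyxx=xyx}$, i.e.\ $xhx^2 \approx xhx$. The idea is that an identity $x^{e_0}\prod(h_ix^{e_i}) \approx x^{f_0}\prod(h_ix^{f_i})$ in $\eqref{id: O sub xhxhx}$ involves only one "long" variable $x$ (all other variables are simple separators) and, once we are allowed to use $\eqref{id: xyxx=xyx}$, every exponent $e_i$ with $i\geq 1$ that is $\geq 2$ can be reduced to $1$, and the trailing exponent $e_m$ can be reduced (as can $f_m$). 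So the plan is: first record the normalization lemma that modulo $\eqref{id: xyxx=xyx}$ (together with $\aper$ in the background, but actually $\eqref{id: xyxx=xyx}$ alone suffices to collapse an internal $x^{k}$ with $k\geq 2$ next to an $h$ on its left to $x$, and to delete a trailing $x^{k}$ after the last $h_m$), each identity in $\eqref{id: O sub xhxhx}$ becomes equivalent (modulo $\eqref{id: xyxx=xyx}$) to one of the form $x^{a}\prod_{i=1}^m(h_ix) \approx x^{b}\prod_{i=1}^m(h_ix)$ with at most the \emph{first} exponents $a,b$ possibly large and differing. Then I would argue that such a reduced identity, since the $h_i$ on both sides match and all other letters are simple, is a consequence of the three-variable instances obtained by setting $m\leq 1$ and collapsing: namely $xhx\approx hx^2$ (the $m=1$, $a=1,b=0$ case) together with $x^2hx\approx x^2h$, $x^2hxtx\approx x^2htx$ (the $m=1,2$ cases with a leading $x^2$), and $\eqref{id: xxyx=xyx}$ itself.

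\textbf{Key steps, in order.}
First, I would fix an arbitrary identity $\bfu\approx\bfv$ from $\eqref{id: O sub xhxhx}$ and show, using only $\eqref{id: xyxx=xyx}$ (applied with suitable substitutions of the separator block for $h$ and the tail for the trailing occurrences), that $\{\eqref{id: xyxx=xyx}\}\vdash \bfu\approx\bfu'$ and $\{\eqref{id: xyxx=xyx}\}\vdash\bfv\approx\bfv'$ where $\bfu',\bfv'$ have the normal form $x^{a}h_1x^{\varepsilon_1}h_2\cdots h_mx^{\varepsilon_m}$ with each $\varepsilon_i\in\{0,1\}$ and $\varepsilon_m\in\{0\}$ after absorbing the tail — more precisely, reduce every exponent $e_i$ for $1\leq i\leq m$ to $\min(e_i,1)$ using $yh x^2\approx yhx$ (an instance of $\eqref{id: xyxx=xyx}$ with $x\mapsto x$, $h\mapsto$ the preceding context), and delete a trailing $x^{e_m}$ entirely. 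Second, because the middle variables $h_1,\dots,h_m$ appear in the same order and with the same multiplicities on both sides, and because after normalization the two sides can differ only in the leading power of $x$ and in which internal $\varepsilon_i$ equal $0$ versus $1$, I would reduce to the "atomic" discrepancies: a single internal $x$ inserted or deleted next to some $h_i$, and a change in the leading power. Third, I would show each atomic discrepancy is deducible from $\{\eqref{id: xyxx=xyx}\}\cup\eqref{id: Oc sub}$: an internal insertion/deletion of $x$ between two separators is handled by $x^2hxtx\approx x^2htx$ and $\eqref{id: xxyx=xyx}$ after using the leading $x^2$; a change of the leading power uses $xhx\approx hx^2$ and $x^2hx\approx x^2h$. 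Conversely — and this direction is easy — each identity in $\eqref{id: Oc sub}$ is itself a member of (or trivially follows from a member of) $\eqref{id: O sub xhxhx}$, so $\{\eqref{id: xyxx=xyx}\}\cup\Sigma \vdash \Sigma_1$ poses no difficulty once $\Sigma$ is chosen to be exactly those identities of $\eqref{id: Oc sub}$ needed to generate the atomic discrepancies that actually occur in $\Sigma_1$.

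\textbf{Choosing $\Sigma$.} The set $\Sigma\subseteq\eqref{id: Oc sub}$ is defined by a simple rule: include $xhx\approx hx^2$ iff some identity in $\Sigma_1$ changes the leading exponent in a way that is not a mere "power $\geq 2$ collapse"; include $x^2hx\approx x^2h$ iff some identity deletes a trailing $x$ after a leading $x^2$; include $x^2hxtx\approx x^2htx$ iff some identity inserts or deletes an internal $x$; include $\eqref{id: xxyx=xyx}$ iff some exponent collapse of the form $x^2hx\cdot(\cdots)$ to $x\cdot hx\cdot(\cdots)$ is needed on the left. One then checks both deductions $\{\eqref{id: xyxx=xyx}\}\cup\Sigma_1\vdash\{\eqref{id: xyxx=xyx}\}\cup\Sigma$ and its reverse hold, giving the claimed equivalence.

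\textbf{Main obstacle.} The genuinely delicate point is the bookkeeping in the first step: the reduction to normal form must be done so that the \emph{same} sequence of rewrites applies compatibly to both $\bfu$ and $\bfv$ (so that after normalization the two sides differ only in a small, classifiable way), and one must be careful that collapsing a leading power $x^{e_0}$ with $e_0\geq 2$ on one side while the other side has $f_0\in\{0,1\}$ is exactly the situation forcing $xhx\approx hx^2$ or $x^2hx\approx x^2h$ into $\Sigma$, rather than something not on the list $\eqref{id: Oc sub}$. Verifying that the list $\eqref{id: Oc sub}$ is \emph{complete} for all such discrepancies — i.e.\ that no identity in $\eqref{id: O sub xhxhx}$, modulo $\eqref{id: xyxx=xyx}$, forces anything outside $\eqref{id: Oc sub}$ — is where the argument needs care; I expect this to follow from the observation that modulo $\eqref{id: xyxx=xyx}$ the only information in an identity of $\eqref{id: O sub xhxhx}$ is (a) whether the leading power is $0$, $1$, or $\geq 2$ on each side, and (b) for each internal gap between separators, whether it contains an $x$ or not, and that (a) is controlled by $\{xhx\approx hx^2,\ x^2hx\approx x^2h\}$ and (b) by $\{x^2hxtx\approx x^2htx,\ \eqref{id: xxyx=xyx}\}$.
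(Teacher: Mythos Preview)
Your approach is genuinely different from the paper's: the paper simply invokes Gusev \cite[Lemma~6.7]{Gus25sf}, which already gives the equivalence $\{\text{\ref{id: aperiodic}}_2,\, x^2hx^2 \approx x^2hx\} \cup \Sigma_1 \sim \{\text{\ref{id: aperiodic}}_2,\, x^2hx^2 \approx x^2hx\} \cup \Sigma$ with $\Sigma$ drawn from a slightly longer list, and then observes that modulo $\eqref{id: xyxx=xyx}$ that list collapses to~\eqref{id: Oc sub}. You instead attempt a direct normalization argument. That is a reasonable strategy, and your final observation---that modulo $\eqref{id: xyxx=xyx}$ the only data in an identity of~\eqref{id: O sub xhxhx} are the leading power and the pattern of internal zeros---is the right heuristic.

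However, your normalization step contains concrete errors that would have to be repaired. First, you cannot ``delete a trailing $x^{e_m}$ entirely'' using only $\eqref{id: xyxx=xyx}$; that identity reduces $xhx^2$ to $xhx$, so $e_m$ can be brought down to~$1$ (given a preceding~$x$) but never to~$0$. Second, the claimed normal form $x^a\prod_{i=1}^m(h_ix)$ is not attainable in general: if some internal $e_i=0$, the identity $\eqref{id: xyxx=xyx}$ gives you no way to insert an~$x$ there, and if $e_0=0$ then the first nonzero exponent can only be reduced to~$2$ (via the consequence $x^3\approx x^2$), not to~$1$. The true normal form therefore records, for each side, the position of the first nonzero block, whether that block is~$1$ or~$\geq 2$, and the $0/1$ pattern of subsequent blocks. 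Your case analysis of ``atomic discrepancies'' would need to be redone against this richer normal form; in particular, the reduction of an internal discrepancy to $x^2hxtx\approx x^2htx$ presupposes a leading $x^2$, which you must first manufacture via $\eqref{id: xxyx=xyx}$---and you have to argue that $\eqref{id: xxyx=xyx}$ is actually forced by $\Sigma_1$ in those cases (it is, by substituting separators to~$1$, but you do not say this). Fixing all of this is doable, but it amounts to reproving the cited lemma of Gusev from scratch; the paper's route is much shorter.
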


\begin{proof}
It follows from Gusev \cite[Lemma~6.7]{Gus25sf} that the equivalence \[ \{ \text{\ref{id: aperiodic}}_2,\, x^2hx^2 \approx x^2hx \} \cup \Sigma_1 \sim \{ \text{\ref{id: aperiodic}}_2,\, x^2hx^2 \approx x^2hx \} \cup \Sigma \] holds for some possibly empty set~$\Sigma$ that consists of some of the following identities\textup:
\begin{enumerate}[\ (a)]
\item $xhx^2 \approx hx^2$,\, $x^2hx \approx x^2h$,\, $x^2hxtx \approx x^2htx$;
\item $\big(\prod_{i=1}^{n-1} (xh_i)\big)xh_nx^2 \approx \big(\prod_{i=1}^{n-1} (xh_i)\big)xh_nx$ for some $n \geq 1$;
\item $\big(\prod_{i=1}^{n-1} (xh_i)\big)x^2h_nx \approx \big(\prod_{i=1}^{n-1} (xh_i)\big)xh_nx^2$ for some $n \geq 1$.
\end{enumerate}
(Note that if $n=1$, then the identities in~(b) and~(c) are $xh_1x^2 \approx xh_1x$ and $x^2h_1x \approx xh_1x^2$, respectively.)
It is easily seen that the deduction $\eqref{id: xyxx=xyx} \vdash \{\text{\ref{id: aperiodic}}_2,\, x^2hx^2 \approx x^2hx \}$ holds, so the equivalence $\eqref{id: xyxx=xyx} \cup \Sigma_1 \sim \eqref{id: xyxx=xyx} \cup \Sigma$ follows.
Since it is routinely checked that the equivalence $\{ \eqref{id: xyxx=xyx}, \rm(a), \rm(b), \rm(c) \} \sim \{ \eqref{id: xyxx=xyx}, \eqref{id: Oc sub} \}$ holds, the identities in~$\Sigma$ can be chosen from~\eqref{id: Oc sub}.
\end{proof}

\begin{proposition} \label{P: subvarieties of O}
Every noncommutative subvariety of $\bfO \cap \bfJ_2$ is defined by \[ \{ \eqref{id: O}, \text{\ref{id: aperiodic}}_2, \text{\ref{id: eventually}}_2 \} \cup \Sigma_1 \cup \Sigma_2 \] for some $\Sigma_1 \subseteq \eqref{id: Oc sub}$ and some subset~$\Sigma_2$ of
\begin{equation} 
\left\{ \; \bfh\bfp\bfc \approx \bfh\bfq\bfc \; \; \left| 
\begin{array}{l} 
\bfh \in \{ 1,\, yh\}; \; \bfp,\bfq \in \{x,y\}^+; \\[0.05in] 
\bfc \in \big\{ 1,\, txy,\, \prod_{i=1}^m (t_i\bfa_i), \,\prod_{i=2}^{m+1} (t_i\bfa_i) \,\big|\, m \geq 1 \big\}; \\[0.05in] 
1 \leq |\bfp|_x = |\bfq|_x, \, |\bfp|_y = |\bfq|_y \leq 2; \\[0.05in] 
|\bfh\bfp\bfc|_x = |\bfh\bfq\bfc|_x,\, |\bfh\bfp\bfc|_y = |\bfh\bfq\bfc|_y \geq 2 
\end{array} 
\right. \right\}, \label{id: O J2 sub yhpc=yhqc}
\end{equation}
where $(\bfa_1,\bfa_2,\bfa_3,\bfa_4,\ldots)$ is the alternating sequence $(x,y,x,y,\ldots)$.
\end{proposition}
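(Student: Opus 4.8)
The plan is to take the description of the noncommutative subvarieties of $\bfO \cap \bfJ_2$ already supplied by Lemma~\ref{L: O sub}(ii) and normalize its two parameter sets. So let $\bfV$ be a noncommutative subvariety of $\bfO \cap \bfJ_2$; by Lemma~\ref{L: O sub}(ii), $\bfV$ is defined by $\{ \eqref{id: O}, \text{\ref{id: aperiodic}}_2, \text{\ref{id: eventually}}_2 \} \cup \Sigma_1 \cup \Sigma_2$ for some $\Sigma_1 \subseteq \eqref{id: O sub xhxhx}$ and $\Sigma_2 \subseteq \eqref{id: O sub yhpc=yhqc}$, and it is enough to replace $\Sigma_1$ by a subset of \eqref{id: Oc sub} and $\Sigma_2$ by a subset of \eqref{id: O J2 sub yhpc=yhqc} without altering the deductive closure of the whole set.

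The set $\Sigma_1$ is disposed of using results already at hand. Since \eqref{id: xyxx=xyx} is directly deducible from the first identity of \eqref{id: O}, adjoining \eqref{id: xyxx=xyx} to the defining set of $\bfV$ leaves it equivalent; Lemma~\ref{L: Oc sub} then gives $\eqref{id: xyxx=xyx} \cup \Sigma_1 \sim \eqref{id: xyxx=xyx} \cup \Sigma$ for some $\Sigma \subseteq \eqref{id: Oc sub}$, whence $\{ \eqref{id: O}, \text{\ref{id: aperiodic}}_2, \text{\ref{id: eventually}}_2, \eqref{id: xyxx=xyx} \} \cup \Sigma_1 \cup \Sigma_2 \sim \{ \eqref{id: O}, \text{\ref{id: aperiodic}}_2, \text{\ref{id: eventually}}_2, \eqref{id: xyxx=xyx} \} \cup \Sigma \cup \Sigma_2$; deleting the again-redundant \eqref{id: xyxx=xyx} shows that one may assume from the outset that $\Sigma_1 \subseteq \eqref{id: Oc sub}$.

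The real work is in normalizing $\Sigma_2$. Put $\Delta = \{ \eqref{id: O}, \text{\ref{id: aperiodic}}_2, \text{\ref{id: eventually}}_2 \} \cup \Sigma_1$. As $\Sigma_2$ is finite, it suffices to show that modulo $\Delta$, each identity of \eqref{id: O sub yhpc=yhqc} is equivalent to a finite set of identities of \eqref{id: O J2 sub yhpc=yhqc}. So fix an identity $\bfh\bfp\bfc \approx \bfh\bfq\bfc$ of \eqref{id: O sub yhpc=yhqc}; the cases $\bfc \in \{1, txy\}$ need no treatment since they already lie in \eqref{id: O J2 sub yhpc=yhqc}, so assume $\bfc = \prod_{i=1}^m(t_i\bfc_i)$ with $\bfc_i \in \{1,x,y\}$. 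First, any empty slot $\bfc_i = 1$ may be removed: substituting $t_i \mapsto 1$ deduces the identity in which that slot and its spacer are deleted, and the step is easily undone, so this reduction uses no identity of $\Delta$ at all. Hence one may assume $\bfc_i \in \{x,y\}$ for every $i$. It remains to collapse any pair of consecutive equal slots $\bfc_i = \bfc_{i+1}$: a direct case analysis on $i$ and on the letter immediately preceding this pair shows that, by means of the identities of $\Delta$---chiefly \eqref{id: O}---whose hypotheses are available because the content condition $|\bfh\bfp\bfc|_x, |\bfh\bfp\bfc|_y \geq 2$ furnishes the auxiliary occurrences of $x$ and $y$ needed to apply \eqref{id: O}, the identity is equivalent modulo $\Delta$ to one whose word $\bfc$ has strictly fewer spacers. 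Each such step shortens $\bfc$, so the procedure terminates, leaving $\bfc$ with every slot filled and no two consecutive slots equal, that is, of one of the alternating forms $\prod_{i=1}^m(t_i\bfa_i)$ or $\prod_{i=2}^{m+1}(t_i\bfa_i)$ occurring in \eqref{id: O J2 sub yhpc=yhqc}.

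The principal obstacle is this last step. Removing an empty slot amounts to a pair of mutually inverse elementary operations and is painless; but a repeated letter cannot be collapsed by any substitution, since doing so would change the content of the identity, so one must genuinely rewrite the word using \eqref{id: O}, and because each of the two identities in \eqref{id: O} applies only in a rigidly prescribed context, their applicability depends on exactly where in $\bfc$ the repeated pair sits and on what precedes it---hence the case analysis. A further point requiring care is the bookkeeping: one must verify that every rewriting step returns an identity still belonging to \eqref{id: O J2 sub yhpc=yhqc}, permitting at most the occasional adjunction of a member of the four-element set \eqref{id: Oc sub} to $\Sigma_1$ (which is harmless, since $\Delta$ together with the identity under reduction already entails it), so that the normalization does not escape the two prescribed families before coming to a halt.
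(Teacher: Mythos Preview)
Your overall plan coincides with the paper's: start from Lemma~\ref{L: O sub}(ii), normalise $\Sigma_1$ via Lemma~\ref{L: Oc sub}, and then show that each identity $\bfh\bfp\bfc \approx \bfh\bfq\bfc$ from~\eqref{id: O sub yhpc=yhqc} is equivalent, modulo~\eqref{id: O}, to one with $\bfc$ of alternating form. Your treatment of the cases $\bfc \in \{1,txy\}$ and of empty slots $\bfc_i = 1$ is also what the paper does.

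The divergence is in the step you flag as the ``principal obstacle'': collapsing a repeated slot $\bfc_k = \bfc_{k+1}$. You propose a case analysis on the position~$k$ and on the letter preceding the pair, and you anticipate possibly having to adjoin members of~\eqref{id: Oc sub} to~$\Sigma_1$ as you go. Neither is needed, and your statement that ``a repeated letter cannot be collapsed by any substitution'' is what leads you astray. The paper's argument is uniform and uses only the consequence~\eqref{id: xyxx=xyx} of~\eqref{id: O}: given $\sigma:\bfh\bfp\bfc \approx \bfh\bfq\bfc$ with $\bfc = \cdots t_k\bfc_k\,t_{k+1}\bfc_k \cdots$, first \emph{substitute in the identity} $t_{k+1}\mapsto 1$ and rename $t_k\mapsto t_{k+1}$; this produces a consequence of~$\sigma$ whose tail now reads $\cdots t_{k+1}\bfc_k^{\,2}\cdots$ on both sides. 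Since $\bfc_k\in\{x,y\}=\content(\bfp)=\content(\bfq)$, one application of~\eqref{id: xyxx=xyx} to each side reduces $\bfc_k^{\,2}$ to~$\bfc_k$, yielding exactly $\bfh\bfp\bfc' \approx \bfh\bfq\bfc'$ with the factor $t_k\bfc_k$ removed. The reverse direction is the single substitution $t_{k+1}\mapsto t_k\bfc_k t_{k+1}$. Thus the equivalence $\{\eqref{id: O},\sigma\}\sim\{\eqref{id: O},\sigma'\}$ holds with no case split, no use of the two-variable identities of~\eqref{id: O} beyond their corollary~\eqref{id: xyxx=xyx}, and nothing added to~$\Sigma_1$.

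So the gap in your sketch is not a wrong idea but a missing one: the substitution-then-\eqref{id: xyxx=xyx} trick that manufactures the square $\bfc_k^{\,2}$. Once you have it, the ``case analysis'' and the bookkeeping about~$\Sigma_1$ both evaporate.
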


\begin{proof}
Let~$\bfV$ be any noncommutative subvariety of $\bfO \cap \bfJ_2$.
Then by Lemma~\ref{L: O sub}(ii), the variety~$\bfV$ is defined by $\{ \eqref{id: O}, \text{\ref{id: aperiodic}}_2, \text{\ref{id: eventually}}_2 \} \cup \Sigma_1 \cup \Sigma_2$ for some $\Sigma_1 \subseteq \eqref{id: O sub xhxhx}$ and some $\Sigma_2 \subseteq \eqref{id: O sub yhpc=yhqc}$.
Since the deduction $\eqref{id: O} \vdash \eqref{id: xyxx=xyx}$ holds, it follows from Lemma~\ref{L: Oc sub} that the identities in~$\Sigma_1$ can be chosen from~\eqref{id: Oc sub}.
Hence it remains to show that the identities in~$\Sigma_2$ can be chosen from~\eqref{id: O J2 sub yhpc=yhqc}.

Let $\bfh\bfp\bfc \approx \bfh\bfq\bfc$ be any identity in $\Sigma_2 \subseteq \eqref{id: O sub yhpc=yhqc}$.
If $\bfc \in \{ 1,\, txy\}$, then by definition, the identity $\bfh\bfp\bfc \approx \bfh\bfq\bfc$ is already in~\eqref{id: O J2 sub yhpc=yhqc}.
Therefore, it suffices to consider the case when \[ \bfc = \prod_{i=1}^m (t_i\bfc_i) = t_1\bfc_1 t_2 \bfc_2 \cdots t_m \bfc_m, \] where $\bfc_1,\bfc_2,\ldots,\bfc_m \in \{ 1,x,y \}$ and $m \geq 1$. 
In the remainder of this proof, it is shown that if either $\bfc_k = 1$ or $\bfc_k = \bfc_{k+1} \in \{ x,y \}$ for some~$k$, then the equivalence \begin{equation} \{ \eqref{id: O},\, \bfh\bfp\bfc \approx \bfh\bfq\bfc\} \sim \{ \eqref{id: O},\, \bfh\bfp\bfc' \approx \bfh\bfq\bfc'\} \label{D: hpc <-> hpc'} \end{equation} holds, where~$\bfc'$ is obtained from~$\bfc$ by removing the factor~$t_k\bfc_k$.
It follows that the identity $\bfh\bfp\bfc \approx \bfh\bfq\bfc$ can be chosen so that $(\bfc_1,\bfc_2,\ldots,\bfc_m) \in \{ (x,y,x,y,\ldots),\, (y,x,y,x,\ldots) \}$; in other words, $\bfh\bfp\bfc \approx \bfh\bfq\bfc$ can be chosen from~\eqref{id: O J2 sub yhpc=yhqc}.

\medskip

\noindent\textsc{Case~1}: $\bfc_k = 1$ with $1 \leq k \leq m$.
If $\bfc_m=1$, so that $\bfc = \big( \prod_{i=1}^{m-1} (t_i\bfc_i) \big) t_m$ and $\bfc' = \prod_{i=1}^{m-1} (t_i\bfc_i)$, then it is obvious that $\bfh\bfp\bfc \approx \bfh\bfq\bfc \sim \bfh\bfp\bfc' \approx \bfh\bfq\bfc'$.
Thus assume that $1 \leq k < m$, so that \[ \bfc = \bigg( \prod_{i=1}^{k-1} (t_i\bfc_i) \bigg) t_k \bigg( \prod_{i=k+1}^m (t_i\bfc_i) \bigg) \ \text{ and } \ \bfc' = \bigg( \prod_{i=1}^{k-1} (t_i\bfc_i) \bigg) \bigg( \prod_{i=k+1}^m (t_i\bfc_i) \bigg). \]
The deduction $\bfh\bfp\bfc \approx \bfh\bfq\bfc \vdash \bfh\bfp\bfc' \approx \bfh\bfq\bfc'$ is obvious.
Conversely, since~$\bfc$ is obtained by making the substitution $t_{k+1} \mapsto t_kt_{k+1}$ in~$\bfc'$, the deduction $\bfh\bfp\bfc' \approx \bfh\bfq\bfc' \vdash \bfh\bfp\bfc \approx \bfh\bfq\bfc$ holds.
It follows that the equivalence~\eqref{D: hpc <-> hpc'} holds.

\medskip

\noindent\textsc{Case~2}: $\bfc_k = \bfc_{k+1} \in \{ x,y\}$ with $1 \leq k < k+1 \leq m$.
Then \[ \bfc = \bigg( \prod_{i=1}^{k-1} (t_i\bfc_i) \bigg) t_k\bfc_k t_{k+1} \bfc_k \bigg( \prod_{i=k+2}^m (t_i\bfc_i) \bigg) \ \text{ and } \ \bfc' = \bigg( \prod_{i=1}^{k-1} (t_i\bfc_i) \bigg) t_{k+1}\bfc_k \bigg( \prod_{i=k+2}^m (t_i\bfc_i) \bigg). \]
Deleting the variable~$t_{k+1}$ from both sides of $\bfh\bfp\bfc \approx \bfh\bfq\bfc$ followed by making the substitution $t_k \mapsto t_{k+1}$ results in the identity \[ \bfh\bfp\bigg( \prod_{i=1}^{k-1} (t_i\bfc_i) \bigg) t_{k+1}\bfc_k^2 \bigg( \prod_{i=k+2}^m (t_i\bfc_i) \bigg) \approx \bfh\bfq\bigg( \prod_{i=1}^{k-1} (t_i\bfc_i) \bigg) t_{k+1}\bfc_k^2 \bigg( \prod_{i=k+2}^m (t_i\bfc_i) \bigg); \]
since $\bfc_k \in \{ x,y \} = \content(\bfp) = \content(\bfq)$, the identity~\eqref{id: xyxx=xyx} deducible from~\eqref{id: O} can be used to eliminate from both sides the occurrence of~$\bfc_k$ that immediately follows~$t_{k+1}$, resulting in $\bfh\bfp\bfc' \approx \bfh\bfq\bfc'$.
The deduction $\{ \eqref{id: O},\, \bfh\bfp\bfc \approx \bfh\bfq\bfc \} \vdash \bfh\bfp\bfc' \approx \bfh\bfq\bfc'$ thus holds.
Conversely, making the substitution $t_{k+1} \mapsto t_k\bfc_kt_{k+1}$ on both sides of $\bfh\bfp\bfc' \approx \bfh\bfq\bfc'$ results in $\bfh\bfp\bfc \approx \bfh\bfq\bfc$, so the deduction $\bfh\bfp\bfc' \approx \bfh\bfq\bfc' \vdash \bfh\bfp\bfc \approx \bfh\bfq\bfc$ holds.
Hence the equivalence~\eqref{D: hpc <-> hpc'} holds.
\end{proof}

Although not every subvariety of $\bfO \cap \bfJ_2$ is Cross, some identities from \eqref{id: Oc sub} and \eqref{id: O J2 sub yhpc=yhqc} do define Cross subvarieties of $\bfO \cap \bfJ_2$.
For instance, the identity $x^2hxtx \approx x^2htx$ from \eqref{id: Oc sub} and the \textit{restrictive identities} from \eqref{id: O J2 sub yhpc=yhqc}: \begin{equation} xy \prod_{i=1}^m (t_i\bfa_i) \approx yx \prod_{i=1}^m (t_i\bfa_i), \quad m \geq 2. \tag*{$\Osub$} \label{id: Hsub} \end{equation} 
Let~$\text{\ref{id: Hsub}}_m$ denote the $m$th identity in~\ref{id: Hsub}.

\begin{corollary} \label{C: O cap J2}
\begin{enumerate}[\rm(i)]
\item The variety $\bfO \cap \bfJ_2\{ x^2hxtx \approx x^2htx \}$ is Cross.
\item For each $m \geq 2$\textup, the variety $\bfO \cap \bfJ_2\{ \text{\ref{id: Hsub}}_m \}$ is Cross.
\end{enumerate}
\end{corollary}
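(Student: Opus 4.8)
The plan is to verify the three defining properties of a Cross variety for each of the two varieties. Write $\bfV$ for whichever of $\bfO \cap \bfJ_2\{x^2hxtx \approx x^2htx\}$ and $\bfO \cap \bfJ_2\{\text{\ref{id: Hsub}}_m\}$ is under consideration, and let $\sigma_0$ be its extra defining identity. Finite basedness is immediate since $\bfV$ is defined by finitely many identities, and local finiteness holds because $\bfV \subseteq \bfO$ with $\bfO$ locally finite by Lemma~\ref{L: LF FG max}(i) (recall that~\eqref{id: O} deduces~\eqref{id: xyxx=xyx}). By Lemma~\ref{L: LF FG max}(ii) it then suffices to prove that $\bfV$ is small, after which finite generation follows automatically; so the whole proof reduces to counting the subvarieties of $\bfV$.

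A subvariety $\bfW$ of $\bfV$ is either commutative or noncommutative. If $\bfW$ is commutative, then by Lemma~\ref{L: comm CR}(iii) it equals $\var\{\text{\ref{id: aperiodic}}_n, xy \approx yx\}$ for some $n$, and $\bfW \subseteq \bfJ_2$ forces $n \le 2$, leaving only finitely many possibilities. If $\bfW$ is noncommutative, then it is a noncommutative subvariety of $\bfO \cap \bfJ_2$, so Proposition~\ref{P: subvarieties of O} provides $\Sigma_1 \subseteq \eqref{id: Oc sub}$ and $\Sigma_2 \subseteq \eqref{id: O J2 sub yhpc=yhqc}$ with $\bfW$ defined by $\{\eqref{id: O}, \text{\ref{id: aperiodic}}_2, \text{\ref{id: eventually}}_2\} \cup \Sigma_1 \cup \Sigma_2$; and since $\bfW \subseteq \bfV$, the identity $\sigma_0$ may be adjoined to this base. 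As~\eqref{id: Oc sub} is finite, $\Sigma_1$ ranges over finitely many sets, so everything comes down to one key claim: setting $\mathcal{B} = \{\eqref{id: O}, \text{\ref{id: aperiodic}}_2, \text{\ref{id: eventually}}_2, \sigma_0\}$, every identity in~\eqref{id: O J2 sub yhpc=yhqc} is, modulo $\mathcal{B}$, equivalent to an identity from a fixed finite subset $\Phi$ of~\eqref{id: O J2 sub yhpc=yhqc}. Granting this, each admissible $\Sigma_2$ is $\mathcal{B}$-equivalent to a subset of $\Phi$, so $\bfV$ has only finitely many noncommutative subvarieties, hence only finitely many subvarieties in all, and is therefore small.

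The key claim is where the real work lies, and I expect it to be the main obstacle. Those members $\bfh\bfp\bfc \approx \bfh\bfq\bfc$ of~\eqref{id: O J2 sub yhpc=yhqc} with $\bfc \in \{1, txy\}$ are already bounded in length, so the claim amounts to deducing every member with a long tail $\bfc = \prod_{i=1}^{k}(t_i\bfa_i)$ or $\bfc = \prod_{i=2}^{k+1}(t_i\bfa_i)$ from $\mathcal{B}$ together with the bounded-length members of~\eqref{id: O J2 sub yhpc=yhqc}. For this I would run a case analysis on the prefixes $\bfh\bfp$ and $\bfh\bfq$ — finitely many, since $\bfh \in \{1,yh\}$ and $\bfp,\bfq$ each contain at most two occurrences of $x$ and of $y$ — and on the parity of $k$, and in each case use the two identities in~\eqref{id: O}, the eventual commutativity identity $\text{\ref{id: eventually}}_2$, and judiciously chosen substitutions either to truncate the tail to bounded length or to collapse the identity to a triviality. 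For part~(ii) it should help to first pin down the deducibility relations among the restrictive identities $\text{\ref{id: Hsub}}_{m'}$ modulo $\mathcal{B}$, and then to reduce an arbitrary long member of~\eqref{id: O J2 sub yhpc=yhqc} to a restrictive identity together with bounded-length members; for part~(i) the analogous device is repeated application of $x^2hxtx \approx x^2htx$ to suppress surplus occurrences of $x$ in a long tail. The bookkeeping over the finitely many prefix shapes and the two tail shapes is routine in principle but technical in execution, and accounts for essentially all of the argument.
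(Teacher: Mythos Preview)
Your proposal is correct and follows essentially the same route as the paper: reduce to smallness, use Proposition~\ref{P: subvarieties of O} to pin down the shape of a defining set for any noncommutative subvariety, note that~$\Sigma_1$ ranges over finitely many sets, and then show that modulo the base $\mathcal{B}$ only bounded-length tails are needed in~$\Sigma_2$. The paper carries out your ``key claim'' exactly as you anticipate---for part~(i) it exhibits an explicit two-step shortening $\lambda_m \mapsto \lambda_{m-2}$ using $x^2hxtx \approx x^2htx$ (reducing to $m \le 6$), and for part~(ii) it observes directly that $\text{\ref{id: Hsub}}_m \vdash \{\lambda_k,\rho_k\}$ for all $k \ge m$, so the only outstanding work in your plan is precisely that bookkeeping.
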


\begin{proof}
Let $\sigma \in \{ x^2hxtx \approx x^2htx,\, \text{\ref{id: Hsub}}_m \}$.
Since $\bfO \models \eqref{id: O} \vdash \eqref{id: xyxx=xyx}$, the finitely based variety $\bfO \cap \bfJ_2\{ \sigma \}$ is locally finite by Lemma~\ref{L: LF FG max}(i).
In what follows, this variety is shown to be small and so is Cross by Lemma~\ref{L: LF FG max}(ii).

Let~$\bfV$ be any subvariety of $\bfO \cap \bfJ_2\{ \sigma \}$.
By Lemma~\ref{L: comm CR}(iii), only three subvarieties of $\bfO \cap \bfJ_2\{ \sigma \}$ are commutative; specifically, they are $[ \text{\ref{id: aperiodic}}_n,\, xy \approx yx]$, where $n \in \{ 0,1,2\}$.
Therefore, assume that~$\bfV$ is not commmutative, so that by Proposition~\ref{P: subvarieties of O}, it is defined by $\{ \eqref{id: O}, \text{\ref{id: aperiodic}}_2, \text{\ref{id: eventually}}_2,\, \sigma \} \cup \Sigma_1 \cup \Sigma_2$ for some $\Sigma_1 \subseteq \eqref{id: Oc sub}$ and $\Sigma_2 \subseteq \eqref{id: O J2 sub yhpc=yhqc}$.
Since \eqref{id: Oc sub} is finite, there are only finitely many choices for the set~$\Sigma_1$.
As for the set~$\Sigma_2$, there are infinitely many choices due precisely to identities from~\eqref{id: O J2 sub yhpc=yhqc} of the following types: \[ \lambda_m: \bfh\bfp \prod_{i=1}^m (t_i\bfa_i) \approx \bfh\bfq \prod_{i=1}^m (t_i\bfa_i) \ \text{ and } \ \rho_m: \bfh\bfp \prod_{i=2}^{m+1} (t_i\bfa_i) \approx \bfh\bfq \prod_{i=2}^{m+1} (t_i\bfa_i), \quad m \geq 1. \]
However, in each of the following cases, it is shown that whenever~$\lambda_m$ or~$\rho_m$ is required for the definition of~$\bfV$, it is possible to choose~$m$ from a fixed finite set.
It follows that there are only finitely many choices for~$\Sigma_2$.
Consequently, there are only finitely many choices for~$\bfV$, whence $\bfO \cap \bfJ_2\{ \sigma \}$ is small.

\medskip

\noindent\textsc{Case~1}: $\sigma$ is $x^2hxtx \approx x^2htx$.
Consider the identity~$\lambda_m$ with $m \geq 7$.
Removing the variables~$t_3$ and~$t_4$ from both sides of~$\lambda_m$ results in 
\begin{equation}
\bfh\bfp \cdot t_1x \cdot t_2y \cdot x \cdot y \cdot t_5x \cdot t_6y \prod_{i=7}^m (t_i\bfa_i) \approx \bfh\bfq \cdot t_1x \cdot t_2y \cdot x \cdot y \cdot t_5x \cdot t_6y \prod_{i=7}^m (t_i\bfa_i); \label{id: hpc=hqc 1}
\end{equation}
in other words, $\lambda_m \vdash \eqref{id: hpc=hqc 1}$.
Recall that $\eqref{id: O} \vdash \eqref{id: xyxx=xyx}$ and $x,y \in \content(\bfp) = \content(\bfq)$; hence the deduction $\{\eqref{id: O},\sigma,\lambda_m\} \vdash \lambda_{m-2}$ holds because
\begin{align*}
\{\eqref{id: O},\sigma,\lambda_m\} \vdash
\bfh\bfp \cdot t_1x \cdot t_2y \cdot t_5x \cdot t_6y \prod_{i=7}^m (t_i\bfa_i) & \stackrel{\eqref{id: xyxx=xyx}}{\approx} \bfh\bfp \cdot t_1x^2 \cdot t_2y^2 \cdot t_5x \cdot t_6y \prod_{i=7}^m (t_i\bfa_i) \\
& \stackrel{\sigma}{\approx} \bfh\bfp \cdot t_1x^2 \cdot t_2y^2 \cdot x \cdot y \cdot t_5x \cdot t_6y \prod_{i=7}^m (t_i\bfa_i) \\
& \stackrel{\eqref{id: hpc=hqc 1}}{\approx} \bfh\bfq \cdot t_1x^2 \cdot t_2y^2 \cdot x \cdot y \cdot t_5x \cdot t_6y \prod_{i=7}^m (t_i\bfa_i) \\
& \stackrel{\sigma}{\approx} \bfh\bfq \cdot t_1x^2 \cdot t_2y^2 \cdot t_5x \cdot t_6y \prod_{i=7}^m (t_i\bfa_i) \\
& \stackrel{\eqref{id: xyxx=xyx}}{\approx} \bfh\bfq \cdot t_1x \cdot t_2y \cdot t_5x \cdot t_6y \prod_{i=7}^m (t_i\bfa_i) \vdash \lambda_{m-2}.
\end{align*}
The deduction $\lambda_{m-2} \vdash \lambda_m$ is obvious, so the equivalence $\{\eqref{id: O},\sigma,\lambda_m\} \sim \{\eqref{id: O},\sigma,\lambda_{m-2}\}$ follows.
If $m - 2 \geq 7$, then the same argument gives $\{\eqref{id: O},\sigma,\lambda_{m-2}\} \sim \{\eqref{id: O},\sigma,\lambda_{m-4}\}$.
This can be repeated to obtain \[ \{\eqref{id: O},\sigma,\lambda_m\} \sim \{\eqref{id: O},\sigma,\lambda_{m-2}\} \sim \cdots \sim \{\eqref{id: O},\sigma,\lambda_k\} \] for some $k \leq 6$.
Similarly, $\{\eqref{id: O},\sigma,\rho_m\} \sim \{\eqref{id: O},\sigma,\rho_k\}$ for some $k \leq 6$.
Therefore, whenever any of the identities~$\lambda_m$ and~$\rho_m$ is required for the definition of~$\bfV$, it is possible to choose it from $\{\lambda_k,\rho_k \,|\, 1 \leq k \leq 6\}$.

\medskip

\noindent\textsc{Case~2}: $\sigma$ is $\text{\ref{id: Hsub}}_m$.
Since $\bfp,\bfq \in \{x,y\}^+$ are such that $1 \leq |\bfp|_x = |\bfq|_x, \, |\bfp|_y = |\bfq|_y \leq 2$, it is easily seen that the deduction $\text{\ref{id: Hsub}}_m \vdash \{ \lambda_k, \rho_k\}$ holds for all $k \geq m$.
Therefore, whenever any identity from $\{\lambda_k,\rho_k \,|\, k \geq 1\}$ is required for the definition of~$\bfV$, it is possible to choose it from $\{\lambda_k,\rho_k \,|\, 1 \leq k \leq m-1\}$.
\end{proof}

\subsection{The variety \texorpdfstring{$\bfAzero \vee \bfQ$}{A0 v Q}} \label{subsec: A v Q}

Let $\bfAzero$, $\bfBzero$, $\bfE$, and $\bfQ$ denote the varieties generated by the monoids
\begin{align*}
\monAzero & = \langle e,f,1 \,|\, e^2=e,\, f^2=f,\, fe=0 \rangle = \{ 0,\, e,\, f,\, ef,\, 1\}, \\
\monBzero & = \langle a,e,f,1 \,|\, af=ea=a,\, e^2=e,\, f^2=f,\, ef=fe=0 \rangle = \{ 0,\, a,\, e,\, f,\, 1\}, \\
\monE & = \langle a,e,1 \,|\, ae=a,\, ea=0,\, e^2=e \rangle = \{ 0,\, a,\, e,\, 1\}, \\ 
\text{and }\, \monQ & = \langle a, b, e, 1 \,|\, ae=ba=eb=0,\, be=b,\, ea=a,\, e^2=e \rangle = \{ 0,\, a,\, b,\, e,\, ab,\, 1 \},
\end{align*}
respectively.
These monoids satisfy the identities $\{ \text{\ref{id: aperiodic}}_2, \text{\ref{id: eventually}}_2 \}$ and so $\bfAzero,\bfBzero,\bfE,\bfQ \subseteq \bfJ_2$.

\begin{lemma}[Lee \cite{Lee08,Lee14rm}] \label{L: bases A B E Q}
The varieties~$\bfAzero$\textup, $\bfBzero$\textup, $\bfE$\textup, and~$\bfQ$ are Cross.
Specifically\textup,
\begin{enumerate}[\rm(i)] 
\item $\bfAzero = \var\{ xhxtx \approx xhtx,\, \text{\ref{id: eventually}}_2 \} = \var\{ xhxtx \approx xhtx, \; xyhxty \approx yxhxty, \; xhytxy \approx xhytyx \}$\textup;
\item $\bfBzero = \var\{ xhxtx \approx xhtx,\; x^2y^2 \approx y^2x^2 \}$\textup;
\item $\bfE = \var\{\text{\ref{id: aperiodic}}_2,\; xhx \approx x^2h,\; x^2y^2 \approx y^2x^2\} = \bfJ_2 \{ xhx \approx x^2h \}$\textup;
\item $\bfQ = \var\{\eqref{id: xxyx=xyx}, \eqref{id: xyxx=xyx},\; x^2y^2 \approx y^2x^2\}$. 
\end{enumerate}
\end{lemma}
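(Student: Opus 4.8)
The plan is to treat the four varieties $\bfAzero$, $\bfBzero$, $\bfE$, $\bfQ$ uniformly; write $\bfV$ for one of them and $\monM$ for the corresponding generating monoid. Since $\bfV$ is generated by a finite monoid, it is finitely generated, hence locally finite; local finiteness is in any case immediate from Lemma~\ref{L: LF FG max}(i), because the identity $xhxtx \approx xhtx$ (valid in $\monAzero$ and $\monBzero$) directly yields~\eqref{id: xyxx=xyx}, while $\monE$ satisfies~\eqref{id: xyxx=xyx} and $\monQ$ satisfies both~\eqref{id: xxyx=xyx} and~\eqref{id: xyxx=xyx}. Therefore, by Lemma~\ref{L: LF FG max}(ii), once I show that $\bfV$ is finitely based \emph{and} small, it follows that $\bfV$ is Cross. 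So for each variety I would carry out two essentially independent tasks: (a)~prove the displayed equational basis, and (b)~bound the lattice of subvarieties.

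For task~(a), checking that $\monM$ satisfies each displayed identity is a short finite computation---each monoid has at most six elements, and any substitution sending a repeated variable to an element other than $0$, $1$, or an idempotent already forces both sides to the zero element. The substantive direction is completeness: I would show that every identity $\bfu \approx \bfv$ satisfied by $\monM$ is deducible from the displayed set $\Sigma$, by a normal-form argument. Using $\Sigma$, one rewrites an arbitrary word into a canonical representative with a bounded number of occurrences of each variable and a rigidly prescribed arrangement of the repeated ones: for $\monAzero$, each variable occurs at most twice, subject to the linear commutation identities in item~(i); for $\monE$, all repeated variables are gathered to the front as squares, reflecting $xhx \approx x^2h$; for $\monQ$, one additionally records whether each repeated variable occurs once or twice and the left-to-right order of the doubled ones, subject to $x^2y^2 \approx y^2x^2$. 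One then verifies that $\monM$ separates distinct canonical forms---equivalently, that the relatively free monoid of $\var\Sigma$ over $n$ generators, whose elements are exactly the canonical forms, embeds into a power of $\monM$---which forces $\var\Sigma$ to coincide with $\bfV$. The equality $\bfE = \bfJ_2\{xhx \approx x^2h\}$ then follows from the routine observation that $\{\text{\ref{id: aperiodic}}_2, \text{\ref{id: eventually}}_2, xhx \approx x^2h\} \sim \{\text{\ref{id: aperiodic}}_2, xhx \approx x^2h, x^2y^2 \approx y^2x^2\}$, since $xyxy \approx x^2y^2$ and $yxyx \approx y^2x^2$ already follow from $xhx \approx x^2h$.

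For task~(b), I would read the subvarieties off directly from the canonical forms of task~(a): as those forms are linear words with tightly controlled repetition, the identities needed to define any subvariety can, up to equivalence~$\sim$, be drawn from a fixed finite list of short identities, so each of $\bfAzero, \bfBzero, \bfE, \bfQ$ has a finite subvariety lattice. For $\bfE$ there is a shortcut: Lemma~\ref{L: xyx not isoterm}(ii) with $n = 2$ states that $\var\{\text{\ref{id: aperiodic}}_2, xhx \approx x^2h\}$ is Cross, in particular small, so its subvariety $\bfE$ is small; being locally finite, $\bfE$ is then finitely generated by Lemma~\ref{L: LF FG max}(ii), and together with the explicit finite basis from task~(a) this already shows $\bfE$ is Cross.

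The hardest part, I expect, is the completeness direction of task~(a), and specifically the combinatorial bookkeeping for $\monBzero$ and above all $\monQ$: the canonical form must be chosen so that the rewrite rules coming from $\Sigma$ are confluent and so that distinct representatives genuinely survive separation by $\monM$, and with three interacting basis identities $\eqref{id: xxyx=xyx}$, $\eqref{id: xyxx=xyx}$, $x^2y^2 \approx y^2x^2$ for $\bfQ$ this is delicate. The full details are worked out in Lee~\cite{Lee08, Lee14rm}, and I would follow that treatment.
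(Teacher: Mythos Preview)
The paper does not give a proof of this lemma at all: it is stated with attribution to Lee~\cite{Lee08,Lee14rm} and no argument is supplied. Your sketch is therefore not comparable to anything in the paper itself; rather, it is an outline of the kind of normal-form argument that those cited papers carry out, and you explicitly acknowledge as much in your final sentence.

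As a sketch, your plan is sound. One small caution: your proposed shortcut for~$\bfE$ via Lemma~\ref{L: xyx not isoterm}(ii) is itself, in this paper, proved by citing Lee~\cite[Corollary~3.6]{Lee14rm}, so if you were aiming for a self-contained treatment you would want to check that this does not introduce circularity (it does not, since that corollary is a general statement about all varieties satisfying $xhx \approx x^2h$ and is established independently). Otherwise the two tasks you identify---verify the bases by canonical forms and separation, then read off the subvariety lattice---are exactly what Lee does in the cited references.
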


It is easily seen that the varieties~$\bfAzero$, $\bfBzero$, and~$\bfQ$ are self-dual but the variety~$\bfE$ is not.

\begin{lemma} \label{L: exclusion A0 Q B0}
Let~$\bfV$ be any locally finite subvariety of~$\bfJ_n$ for some $n \geq 2$.
Then
\begin{enumerate}[\rm(i)]
\item $\monAzero \notin \bfV$ if and only if $\bfV \models (x^ny^n)^2 \approx x^ny^n$\textup;
\item $\monQ \notin \bfV$ if and only if $\bfV \models x^nhxtx^n \approx x^nhtx^n$\textup;
\item $\monBzero \notin \bfV$ if and only if either $\bfV \models x^2 \approx x$\textup, $\bfV \models x^2hx^2 \approx x^2h$\textup, or $\bfV \models x^2hx^2 \approx hx^2$.
\end{enumerate}
\end{lemma}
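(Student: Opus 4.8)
The plan is to prove each biconditional by verifying the easy implication directly and the converse by contraposition.

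\emph{Easy implication} (if $\bfV$ satisfies the displayed identity, then $\bfV$ omits the monoid). In each case it suffices to falsify the identity inside the monoid itself. In $\monAzero$, send $x\mapsto e$ and $y\mapsto f$: since $e,f$ are idempotent, $x^ny^n$ evaluates to $ef$, while $(x^ny^n)^2$ evaluates to $efef=e(fe)f=0\ne ef$. In $\monQ$, send $x\mapsto e$, $h\mapsto a$, $t\mapsto b$: then $x^nhtx^n$ evaluates to $eabe=ab\ne0$, whereas $x^nhxtx^n$ evaluates to $eaebe=(eae)(be)=0$ because $ae=0$. In $\monBzero$: $a^2=0\ne a$ falsifies $x^2\approx x$; sending $x\mapsto e$, $h\mapsto a$ makes $x^2hx^2$ evaluate to $eae=0$ but $x^2h$ to $ea=a$; and sending $x\mapsto f$, $h\mapsto a$ makes $x^2hx^2$ evaluate to $faf=0$ but $hx^2$ to $af=a$. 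Hence any variety satisfying (one of) the displayed identities cannot contain the corresponding monoid.

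\emph{Converse.} Here I would assume $\bfV$ is locally finite, $\bfV\subseteq\bfJ_n$ for some $n\ge2$, and $\bfV$ fails the displayed identity — all three, in part~(iii) — and deduce that the monoid lies in $\bfV$. By Lemma~\ref{L: subvarieties of J}(ii) the variety $\bfV$ satisfies $x^{n+1}\approx x^n$, so $w^n$ is idempotent for every element $w$ of every member of $\bfV$. The degenerate cases are handled via Lemma~\ref{L: comm CR}: if $\bfV$ were completely regular it would lie in $\bfJ_1$ and hence satisfy $x^2\approx x$ (and, being a semilattice variety, also $(x^ny^n)^2\approx x^ny^n$ and $x^nhxtx^n\approx x^nhtx^n$); if $\bfV$ were commutative a direct computation using $x^{n+1}\approx x^n$ gives the relevant collapsing identity in each part — either way contradicting the assumption. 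So $\bfV$ is neither, whence $xy$ is an isoterm for $\bfV$ (Lemma~\ref{L: comm CR}(ii)) and every identity of $\bfV$ preserves content.

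I would then use local finiteness: the relatively free monoid $F$ of $\bfV$ over the finitely many variables in play is a finite $J$-trivial monoid, and the failure of the identity says exactly that a short, explicit list of elements of $F$ is pairwise distinct. For part~(i), writing $e=[x^n]$ and $f=[y^n]$ for the idempotent images of $x^n$ and $y^n$ in $F_\bfV(x,y)$, the hypothesis becomes $ef\ne efef$; since $\bfV$ is not completely regular, this forces $1,e,f,ef$ to be pairwise distinct and $fe,efe,efef$ to lie outside $\{1,e,f,ef\}$, each excluded coincidence either collapsing $efef$ to $ef$ or forcing $\bfV$ to be completely regular. I would then take the submonoid $S=\langle e,f\rangle$ of $F$ and pass to the Rees quotient of $S$ by the complement of $\{1,e,f,ef\}$: because $S$ is $J$-trivial, that complement is an ideal exactly when $\{1,e,f,ef\}$ is upward closed in the order $\le_{\mathcal J}$, and the resulting five-element monoid then has precisely the multiplication table of $\monAzero$, giving $\monAzero\in\bfV$. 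Parts~(ii) and~(iii) are handled in the same way: for~(ii) one works in $F_\bfV(x,h,t)$ with the idempotent $[x^n]$ and the candidate elements $[x^nh]$, $[tx^n]$, $[x^nhtx^n]$, the failure of $x^nhxtx^n\approx x^nhtx^n$ supplying the distinctness needed to realise $\monQ$ as the Rees quotient; for~(iii) one works in $F_\bfV(x,h)$, using the idempotents coming from $x$ and from a fresh variable together with an element built from $h$, the failure of all three identities making these, $1$, and a zero into five distinct elements multiplying as in $\monBzero$.

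The hard part will be exactly this last step — proving that each Rees quotient is the intended monoid rather than a proper quotient of it. Two things must be verified: that the chosen elements obey the defining relations of $\monAzero$, $\monQ$, or $\monBzero$ (routine, using that $n$th powers are idempotent and that $xy$ is an isoterm), and that the surviving elements are pairwise distinct in $F$ while the candidate ideal is genuinely downward closed for $\le_{\mathcal J}$ — and it is here that the failure of the displayed identity (and, for part~(iii), of all three) gets consumed. Part~(iii) is the most delicate, since three alternatives must be excluded at once, which forces a short case analysis according to which of $x^2hx^2\approx x^2h$, $x^2hx^2\approx hx^2$, $x^2\approx x$ fails and how $\bfV$ consequently relates the words $x^2hx^2$, $x^2h$, and $hx^2$.
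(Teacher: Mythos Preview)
The paper's own proof is entirely by citation: part~(i) to Lee--Rhodes--Steinberg, part~(ii) to Gusev, part~(iii) to Almeida. Your proposal replaces these citations with a direct Rees-quotient construction inside a relatively free monoid. That is a legitimate and more self-contained route, and for parts~(i) and~(ii) it works essentially as you outline: with $e=[x^n]$ and $f=[y^n]$ in $F_\bfV(x,y)$ one realises $\monAzero$ as $\langle e,f\rangle$ modulo the ideal generated by $fe$, and with $e=[x^n]$, $a=[x^nh]$, $b=[tx^n]$ in $F_\bfV(x,h,t)$ one realises $\monQ$ similarly. The distinctness checks you allude to can indeed be completed; for instance, $ae\neq a$ in part~(ii) follows because $\bfV\models x^nhx^n\approx x^nh$ together with $x^{n+1}\approx x^n$ would yield $x^nhx\approx x^nh$ and then $x^nhxtx^n\approx x^nhtx^n$, contrary to hypothesis.

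The gap is in part~(iii). Your candidate idempotents ``coming from $x$ and from a fresh variable'' would naturally be $[x^n]$ and $[y^n]$, with $a=[x^nhy^n]$, say. For the Rees quotient to be $\monBzero$ you need $ae\neq a$, equivalently $\bfV\not\models x^nhy^nx^n\approx x^nhy^n$, which after substituting $y\mapsto 1$ reduces to $\bfV\not\models x^nhx^n\approx x^nh$. But the hypothesis only gives $\bfV\not\models x^2hx^2\approx x^2h$, and for $n>2$ these are \emph{not} obviously equivalent: the identity $x^nhx^n\approx x^nh$ (together with $x^{n+1}\approx x^n$) implies $x^nhx^k\approx x^nh$ for all $k$, but says nothing about $x^2hx^2$. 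Concretely, the variety $\bfJ_3\{x^3hx^3\approx x^3h\}$ excludes $\monBzero$ (since $e\cdot a\cdot e=0\neq a=e\cdot a$ in $\monBzero$ with $e$ idempotent) yet your construction cannot detect this from the $x^2$ identities alone without further argument. This $x^2$-versus-$x^n$ mismatch is precisely why Almeida's result is nontrivial, and your sketch does not indicate how to bridge it; the passing mention of ``a short case analysis according to which of the three identities fails'' does not address the exponent issue at all. You would need either a different realisation of $\monBzero$ that uses $[x^2]$ directly (which is not idempotent for $n>2$), or an independent argument that failure of the $x^2$ identities forces failure of the corresponding $x^n$ identities under local finiteness.
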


\begin{proof}
(i) This follows from Lee \etal. \cite[Theorem~5.23]{LeeRhoSte19}.

(ii) If~$\bfV$ is completely regular, then $\bfV \subseteq \bfJ_1$ by Lemma~\ref{L: comm CR}(i), so that both $\monQ \notin \bfV$ and $\bfV \models x^nhxtx^n \approx x^nhtx^n$ hold.
If~$\bfV$ is not completely regular, then the equivalence holds by Gusev \cite[Lemma~2.12]{Gus25ijac}.

(iii) This follows from Almeida \cite[Proposition~11.10.2]{Alm94}.
\end{proof}

\begin{corollary} \label{C: exclusion A0 Q}
Let~$\bfV$ be any subvariety of $\bfJ_2\{ \eqref{id: xxyx=xyx}, \eqref{id: xyxx=xyx} \}$.
Then
\begin{enumerate}[\rm(i)]
\item $\monAzero \notin \bfV$ if and only if $\bfV \subseteq \bfQ$\textup;
\item $\monQ \notin \bfV$ if and only if $\bfV \subseteq \bfAzero$\textup;
\item $\bfAzero \vee \bfQ \subseteq \bfV$ implies that $\frakL(\bfV) = \frakL(\bfAzero \vee \bfQ) \cup [\bfAzero \vee \bfQ,\bfV]$.
\end{enumerate}
\end{corollary}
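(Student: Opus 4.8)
The plan is to derive part~(iii) formally from parts~(i) and~(ii), so the real work lies in the two ``if and only if'' statements. I would first record the standing facts that $\bfJ_2\{\eqref{id: xxyx=xyx}, \eqref{id: xyxx=xyx}\}$ --- and hence every subvariety~$\bfV$ of it --- is locally finite by Lemma~\ref{L: LF FG max}(i), that any such~$\bfV$ satisfies $\{\text{\ref{id: aperiodic}}_2, \text{\ref{id: eventually}}_2, \eqref{id: xxyx=xyx}, \eqref{id: xyxx=xyx}\}$ because $\bfV \subseteq \bfJ_2$, and that $\bfAzero, \bfQ \subseteq \bfJ_2\{\eqref{id: xxyx=xyx}, \eqref{id: xyxx=xyx}\}$ (the latter from the bases in Lemma~\ref{L: bases A B E Q}).

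For the ``if'' directions, I would argue by contradiction using those bases. If $\bfV \subseteq \bfQ$ and $\monAzero \in \bfV$, then $\monAzero \in \bfQ$ satisfies $x^2y^2 \approx y^2x^2$ by Lemma~\ref{L: bases A B E Q}(iv); but in~$\monAzero$ one has $e^2f^2 = ef \neq 0 = fe = f^2e^2$, a contradiction. If $\bfV \subseteq \bfAzero$ and $\monQ \in \bfV$, then $\monQ \in \bfAzero$ satisfies $xhxtx \approx xhtx$ by Lemma~\ref{L: bases A B E Q}(i); but in~$\monQ$ the substitution $x \mapsto e$, $h \mapsto a$, $t \mapsto b$ gives $e\cdot a\cdot e\cdot b\cdot e = 0$ while $e\cdot a\cdot b\cdot e = ab \neq 0$, a contradiction.

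For the ``only if'' directions, I would invoke the membership criteria of Lemma~\ref{L: exclusion A0 Q B0} with $n=2$ and then close the gap by short deductions. If $\monAzero \notin \bfV$, that lemma gives $\bfV \models (x^2y^2)^2 \approx x^2y^2$ and, symmetrically, $\bfV \models (y^2x^2)^2 \approx y^2x^2$; substituting $x \mapsto x^2$ and $y \mapsto y^2$ in $\text{\ref{id: eventually}}_2$ yields $\bfV \models (x^2y^2)^2 \approx (y^2x^2)^2$, and chaining the three identities gives $\bfV \models x^2y^2 \approx y^2x^2$, whence $\bfV \subseteq \bfQ$ by Lemma~\ref{L: bases A B E Q}(iv). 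If $\monQ \notin \bfV$, the same lemma gives $\bfV \models x^2hxtx^2 \approx x^2htx^2$; using $\eqref{id: xxyx=xyx}$ to lengthen the leading occurrence of~$x$ and $\eqref{id: xyxx=xyx}$ to lengthen the trailing one, I would verify the chain $\bfV \models xhxtx \approx x^2hxtx^2 \approx x^2htx^2 \approx xhtx$, whence $\bfV \subseteq \bfAzero$ by Lemma~\ref{L: bases A B E Q}(i) (using also $\bfV \models \text{\ref{id: eventually}}_2$).

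For part~(iii): given $\bfAzero \vee \bfQ \subseteq \bfV$ and any subvariety $\bfU \subseteq \bfV$, note $\bfU \subseteq \bfJ_2\{\eqref{id: xxyx=xyx}, \eqref{id: xyxx=xyx}\}$, so parts~(i) and~(ii) apply to~$\bfU$ and force one of three alternatives: $\monAzero \notin \bfU$ and $\bfU \subseteq \bfQ \subseteq \bfAzero \vee \bfQ$; or $\monQ \notin \bfU$ and $\bfU \subseteq \bfAzero \subseteq \bfAzero \vee \bfQ$; or $\monAzero, \monQ \in \bfU$ and hence $\bfAzero \vee \bfQ \subseteq \bfU$. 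In every case $\bfU \in \frakL(\bfAzero \vee \bfQ) \cup [\bfAzero \vee \bfQ, \bfV]$, and the reverse inclusion is immediate, giving the stated equality. I expect the only nontrivial step to be the deduction in the ``only if'' direction of~(ii): one must interpolate between the compact identity $xhxtx \approx xhtx$ defining~$\bfAzero$ and the weaker relation $x^2hxtx^2 \approx x^2htx^2$ supplied by Lemma~\ref{L: exclusion A0 Q B0}(ii), which requires choosing the correct sequence of padding steps through $\eqref{id: xxyx=xyx}$ and $\eqref{id: xyxx=xyx}$; the analogous step in~(i) is, by contrast, immediate once eventual commutativity is applied to~$x^2$ and~$y^2$.
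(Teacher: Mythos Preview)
Your argument is correct and follows essentially the same route as the paper: local finiteness from Lemma~\ref{L: LF FG max}(i), the membership criteria of Lemma~\ref{L: exclusion A0 Q B0} with $n=2$, the same short deduction chains (through $\text{\ref{id: eventually}}_2$ in part~(i) and through $\eqref{id: xxyx=xyx},\eqref{id: xyxx=xyx}$ in part~(ii)) to reach the bases of~$\bfQ$ and~$\bfAzero$ in Lemma~\ref{L: bases A B E Q}, and the same derivation of~(iii) from~(i) and~(ii). The only cosmetic differences are that you spell out explicit element computations for the converse directions whereas the paper simply asserts them, and your part~(iii) is phrased as a trichotomy rather than a contrapositive.
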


\begin{proof}
Since $\bfV \models \{ \text{\ref{id: aperiodic}}_2, \text{\ref{id: eventually}}_2, \eqref{id: xxyx=xyx}, \eqref{id: xyxx=xyx} \}$ by assumption, $\bfV$ is locally finite by Lemma~\ref{L: LF FG max}(i).

(i) Suppose that $\monAzero \notin \bfV$.
Then~$\bfV$ satisfies $\sigma: (x^2y^2)^2 \approx x^2y^2$ by Lemma~\ref{L: exclusion A0 Q B0}(i), so that \[ \bfV \models x^2y^2 \stackrel{\sigma}{\approx} (x^2y^2)^2 \stackrel{\text{\ref{id: eventually}}_2}{\approx} (y^2x^2)^2 \stackrel{\sigma}{\approx} y^2x^2 \vdash x^2y^2 \approx y^2x^2. \]
Therefore, the inclusion $\bfV \subseteq \bfQ$ holds by Lemma~\ref{L: bases A B E Q}(iv).
Conversely, if $\bfV \subseteq \bfQ$, then~$\bfV$ satisfies the identity $x^2y^2 \approx y^2x^2$; but since~$\monAzero$ does not satisfy this identity, $\monAzero \notin \bfV$.

(ii) Suppose that $\monQ \notin \bfV$.
Then~$\bfV$ satisfies $\tau: x^2hxtx^2 \approx x^2htx^2$ by Lemma~\ref{L: exclusion A0 Q B0}(ii), so that \[ \bfV \models xhxtx \stackrel{\eqref{id: xxyx=xyx},\eqref{id: xyxx=xyx}}{\approx} x^2hxtx^2 \stackrel{\tau}{\approx} x^2htx^2 \stackrel{\eqref{id: xxyx=xyx},\eqref{id: xyxx=xyx}}{\approx} xhtx \vdash xhxtx \approx xhtx. \]
Therefore, the inclusion $\bfV \subseteq \bfAzero$ holds by Lemma~\ref{L: bases A B E Q}(i).
Conversely, if $\bfV \subseteq \bfAzero$, then~$\bfV$ satisfies the identity $xhxtx \approx xhtx$; but since~$\monQ$ does not satisfy this identity, $\monQ \notin \bfV$.

(iii) Suppose that $\bfAzero \vee \bfQ \subseteq \bfV$.
Let~$\bfU$ be any variety in~$\frakL(\bfV)$ such that $\bfU \notin \frakL(\bfAzero \vee \bfQ)$.
Then $\bfU \nsubseteq \bfQ$ and $\bfU \nsubseteq \bfAzero$, so that $\monAzero,\monQ \in \bfU$ by parts~(i) and~(ii).
Hence $\bfU \in [\bfAzero \vee \bfQ,\bfV]$.
\end{proof}

A word~$\bfu$ is written in \textit{natural form} if \[ \bfu = \bfu_0 \prod_{i=1}^m (h_i\bfu_i) \] where the variables $h_1,h_2,\ldots,h_m \in \scrA$ are precisely all simple variables of~$\bfu$ and the variables of $\bfu_0,\bfu_1,\ldots,\bfu_m \in \scrA^*$ are all non-simple in~$\bfu$.
Note that if all variables of~$\bfu$ are simple, then $\bfu = h_1h_2 \cdots h_m$; if all variables of~$\bfu$ are non-simple, then $\bfu = \bfu_0$.

\begin{lemma}[{Lee \cite[Lemma~5.1]{Lee14rm}}] \label{L: id Q}
Suppose that the words \[ \bfu = \bfu_0 \prod_{i=1}^m (h_i\bfu_i) \ \text{ and } \ \bfv = \bfv_0 \prod_{i=1}^n (t_i\bfv_i) \] are written in natural form.
Then $\bfQ \models \bfu\approx \bfv$ if and only if $m=n$\textup;\, $h_i = t_i$ for all $i \in \{ 1,2,\ldots, m\}$\textup;\, and $\content(\bfu_i) = \content(\bfv_i)$ for all $i \in \{ 0,1,\ldots,m\}$.
\end{lemma}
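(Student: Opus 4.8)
The plan is to prove the two implications separately, using the multiplication of~$\monQ$ for necessity and the equational basis $\{\eqref{id: xxyx=xyx},\eqref{id: xyxx=xyx},\,x^2y^2\approx y^2x^2\}$ of~$\bfQ$ from Lemma~\ref{L: bases A B E Q}(iv) for sufficiency. It is convenient to first record, from the defining relations of~$\monQ$ (in particular $ea=a$ and $be=b$), that $a^2 = a(ea) = (ae)a = 0$, similarly $b^2 = (be)b = b(eb) = 0$, and consequently $a(ab) = (ab)b = (ab)(ab) = ba = 0$, while $e(ab) = (ab)e = ab \neq 0$, $ae = eb = 0$, and $e^2 = e$.

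\textit{Necessity.} Suppose $\bfQ \models \bfu\approx\bfv$. For a single variable~$z$, the substitution sending $z\mapsto a$ and every other variable to~$1$ evaluates~$\bfu$ to~$a^{|\bfu|_z}$; since $a^0 = 1$, $a^1 = a$, and $a^k = 0$ ($k\geq2$) are pairwise distinct, comparing the values of~$\bfu$ and~$\bfv$ forces $|\bfu|_z$ and $|\bfv|_z$ to be simultaneously $0$, $1$, or $\geq 2$. Hence $\content(\bfu) = \content(\bfv)$ and~$\bfu$ and~$\bfv$ have the same set of simple variables. For two simple variables $y\neq z$ of~$\bfu$, the substitution sending $y\mapsto a$, $z\mapsto b$, and all other variables to~$1$ evaluates~$\bfu$ to~$ab$ if~$y$ precedes~$z$ in~$\bfu$ and to~$0$ otherwise (here $ba = 0$ is used); equating with~$\bfv$ shows that the simple variables appear in the same order in both words, so $m = n$ and $h_i = t_i$ for all~$i$. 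Finally, fix a non-simple variable~$z$ and an index~$i$, and apply the substitution sending $z\mapsto e$, every variable other than $z,h_i,h_{i+1}$ to~$1$, and $h_i\mapsto a$, $h_{i+1}\mapsto b$ (with the conventions $h_0 = h_{m+1} = 1$; when $i=0$ use only $h_1\mapsto b$, and when $i=m$ use only $h_m\mapsto a$). A short computation using $a\cdot e = e\cdot b = 0$, $e\cdot a = a$, $b\cdot e = b$, $a\cdot b = ab$, and $(ab)e = ab$ shows that this substitution evaluates~$\bfu$ to~$0$ precisely when $z\in\content(\bfu_i)$ and to a nonzero element otherwise. Comparing with~$\bfv$ (which has the same simple variables in the same positions) gives $z\in\content(\bfu_i)\iff z\in\content(\bfv_i)$; as this holds for every non-simple~$z$ and every~$i$, and simple variables lie in no block, $\content(\bfu_i) = \content(\bfv_i)$ for all~$i$.

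\textit{Sufficiency.} Assume $m = n$, $h_i = t_i$, and $\content(\bfu_i) = \content(\bfv_i)$ for all~$i$. Fix a linear order on~$\scrA$, and for a word $\bfw = \bfw_0\prod_{i=1}^m(h_i\bfw_i)$ in natural form define its \emph{canonical form} \[ \overline{\bfw} = \overline{\bfw_0}\prod_{i=1}^m\bigl(h_i\,\overline{\bfw_i}\bigr), \qquad \overline{\bfw_i} = \textstyle\prod_{z\in\content(\bfw_i)} z^2, \] the latter product taken over the variables of~$\content(\bfw_i)$ in the fixed order. Then~$\overline{\bfw}$ is again in natural form, with simple variables $h_1,\dots,h_m$ and block contents $\content(\bfw_0),\dots,\content(\bfw_m)$, so the hypotheses give $\overline{\bfu} = \overline{\bfv}$ verbatim; it therefore suffices to show $\bfQ\models\bfw\approx\overline{\bfw}$ for every~$\bfw$ in natural form. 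Working with the three defining identities of~$\bfQ$, one checks that $x^3\approx x^2$ is a consequence of~\eqref{id: xxyx=xyx}, and that $\bfp x^2\bfc x\bfq\approx\bfp x\bfc x\bfq$ and $\bfp x\bfc x^2\bfq\approx\bfp x\bfc x\bfq$ are consequences of~\eqref{id: xxyx=xyx} and~\eqref{id: xyxx=xyx} for arbitrary words $\bfp,\bfq,\bfc$. Since every variable of a block~$\bfw_i$ is non-simple in~$\bfw$ and so occurs at least once more somewhere in~$\bfw$, the last two consequences (read as insertions) let one adjoin, one variable at a time, a second copy of any block-variable that occurs only once in its block; after these steps we may assume that every $z\in\content(\bfw_i)$ occurs at least twice in~$\bfw_i$. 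A routine induction on the number of distinct variables of a block then shows, using $x^3\approx x^2$, the two square-collapsing identities above, and $x^2y^2\approx y^2x^2$, that each block can be rewritten as~$\overline{\bfw_i}$; reassembling the blocks yields $\bfw\approx\overline{\bfw}$.

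The step I expect to be the main obstacle is the final induction in the sufficiency argument: turning an arbitrary block---a word all of whose variables occur at least twice in it---into the product of the squares of its variables. The delicate point is that the square-collapsing identities only apply where a square abuts another occurrence of the same variable, so the rewriting must alternate these with uses of $x^2y^2\approx y^2x^2$ in a carefully chosen order; organizing this rewriting cleanly is where the bulk of the (elementary) work lies.
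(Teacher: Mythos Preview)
The paper does not prove this lemma---it is quoted from Lee~\cite{Lee14rm} without argument---so there is no in-paper proof to compare against. Your proof is correct in both directions.

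Regarding the step you flag as the main obstacle: the block rewriting in the sufficiency argument can be carried out directly, without an induction on the number of variables. Once every variable of a block~$\bfw_i$ occurs at least twice in that block, write $\bfw_i = c_1c_2\cdots c_r$ letter by letter. Since each~$c_j$ has another occurrence somewhere in~$\bfw_i$, the insertion rules $\bfp x\bfc x\bfq \approx \bfp x^2\bfc x\bfq$ and $\bfp x\bfc x\bfq \approx \bfp x\bfc x^2\bfq$ allow you to double each letter in turn, yielding $c_1^2c_2^2\cdots c_r^2$. Now $x^2y^2\approx y^2x^2$ lets you bubble-sort the squares into the fixed order, and $x^3\approx x^2$ collapses each resulting power $z^{2e}$ (with $e\geq 2$) to~$z^2$. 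This gives $\overline{\bfw_i}$ in three transparent passes, so the ``delicate alternation'' you anticipated is not actually needed.
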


\begin{proposition} \label{P: A0 vee Q}
The variety $\bfAzero \vee \bfQ$ is Cross.
In particular\textup,
\begin{enumerate}[\rm(i)] 
\item the variety $\bfAzero \vee \bfQ$ is defined by the identities
\begin{equation}
\eqref{id: xxyx=xyx}, \quad \eqref{id: xyxx=xyx}, \quad xyhxty \approx yxhxty, \quad xhytxy \approx xhytyx; \label{id: basis A0 Q}
\end{equation}

\item the lattice $\frakL(\bfAzero \vee \bfQ)$ is given as follows\textup:
\[ \arraycolsep=3pt\def\arraystretch{1.25}
\begin{array}{ccccccrcccl}
           &         &            &         &            &         &             &         & \bfAzero & \subset & \bfAzero \vee \bfQ \\
           &         &            &         &            &         &             &         & \cup     &         & \cup \\
           &         &            &         &            &         & \bfE        & \subset & \bfBzero & \subset & \bfQ \\
           &         &            &         &            &         & \cup        &         & \cup     &         &      \\
\bftrivial & \subset & \bfRq\{1\} & \subset & \bfRq\{x\} & \subset & \bfRq\{xy\} & \subset & \bfEdual &         & 
\end{array}
\]
\end{enumerate}
\end{proposition}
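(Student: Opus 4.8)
The plan is to prove the two itemized assertions; once both hold, $\bfAzero \vee \bfQ$ is finitely based by~(i), small by~(ii), and finitely generated (being generated by the finite monoid $\monAzero \times \monQ$), hence Cross. For~(i), put $\bfU = \var\{\eqref{id: basis A0 Q}\}$. The inclusion $\bfAzero \vee \bfQ \subseteq \bfU$ is straightforward: by Lemma~\ref{L: bases A B E Q}(i) the identities $xyhxty \approx yxhxty$ and $xhytxy \approx xhytyx$ belong to a basis of~$\bfAzero$, while \eqref{id: xxyx=xyx} and \eqref{id: xyxx=xyx} follow from the remaining basis identity $xhxtx \approx xhtx$ of~$\bfAzero$ by the substitutions $h \mapsto 1$ and $t \mapsto 1$; and each identity in~\eqref{id: basis A0 Q} relates two words with the same natural-form skeleton and the same content in every block, so all four hold in~$\bfQ$ by Lemma~\ref{L: id Q}.

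The substance of~(i) is the reverse inclusion $\bfU \subseteq \bfAzero \vee \bfQ$, i.e.\ the \emph{completeness} of the basis~\eqref{id: basis A0 Q}. First, $\bfU \subseteq \bfJ_2$: substituting $h \mapsto 1$ into \eqref{id: xxyx=xyx} gives $\text{\ref{id: aperiodic}}_2$, while substituting $h \mapsto 1$, $t \mapsto 1$ into the two swapping identities gives $xyxy \approx yx^2y$ and $xyxy \approx xy^2x$, which together yield $(xy)^2 \approx (yx)^2$, i.e.\ $\text{\ref{id: eventually}}_2$; moreover $\bfU$ is locally finite by Lemma~\ref{L: LF FG max}(i). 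Next one shows that every word is $\bfU$-equivalent to a word in a canonical form. Writing $\bfw = \bfw_0\prod_{i=1}^m(h_i\bfw_i)$ in natural form, the identities \eqref{id: xxyx=xyx} and \eqref{id: xyxx=xyx} normalize the exponents inside each block, the identity $xyhxty \approx yxhxty$ makes the first block~$\bfw_0$ freely rearrangeable (each variable of~$\bfw_0$ is non-simple in~$\bfw$ and therefore recurs, so an adjacent transposition at the front can be realized by first duplicating the two involved variables via \eqref{id: xyxx=xyx} and \eqref{id: xxyx=xyx} so that their recurrences form the occurrence pattern required by the identity, then applying it and deleting the duplicates), and dually $xhytxy \approx xhytyx$ makes the last block~$\bfw_m$ freely rearrangeable; so the canonical form is determined by the skeleton, the normalized interior blocks, and the sorted boundary blocks. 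It remains to verify that distinct canonical forms are separated by~$\bfAzero$ or by~$\bfQ$: a discrepancy in the skeleton or in the content of a block is detected by~$\bfQ$ through Lemma~\ref{L: id Q}, whereas a discrepancy confined to the interior of a block --- in the order of its letters or in an exponent --- is detected by~$\bfAzero$. As $\bfAzero \vee \bfQ \models \bfu \approx \bfv$ holds exactly when $\bfAzero \models \bfu \approx \bfv$ and $\bfQ \models \bfu \approx \bfv$, this gives $\bfU \subseteq \bfAzero \vee \bfQ$. The main obstacle is precisely this last separation argument, along with the bookkeeping that confirms the boundary blocks are genuinely freely rearrangeable modulo~\eqref{id: basis A0 Q}.

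For~(ii), note that $\bfAzero \models xhxtx \approx xhtx \vdash \{\eqref{id: xxyx=xyx},\eqref{id: xyxx=xyx}\}$ and that $\bfQ$ satisfies \eqref{id: xxyx=xyx} and \eqref{id: xyxx=xyx} by Lemma~\ref{L: bases A B E Q}(iv), so $\bfAzero \vee \bfQ$ --- and hence every subvariety of it --- is a subvariety of $\bfJ_2\{\eqref{id: xxyx=xyx},\eqref{id: xyxx=xyx}\}$, which makes Corollary~\ref{C: exclusion A0 Q} applicable throughout $\frakL(\bfAzero \vee \bfQ)$. For any $\bfV \subseteq \bfAzero \vee \bfQ$: if $\monAzero \notin \bfV$ then $\bfV \subseteq \bfQ$; if $\monQ \notin \bfV$ then $\bfV \subseteq \bfAzero$; and if $\monAzero,\monQ \in \bfV$ then $\bfAzero \vee \bfQ \subseteq \bfV$, forcing $\bfV = \bfAzero \vee \bfQ$. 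Hence $\frakL(\bfAzero \vee \bfQ) = \frakL(\bfAzero) \cup \frakL(\bfQ) \cup \{\bfAzero \vee \bfQ\}$. Inserting the known descriptions of $\frakL(\bfAzero)$ and $\frakL(\bfQ)$ from Lee~\cite{Lee08,Lee14rm}, whose common part is $\frakL(\bfBzero) = \{\bftrivial,\bfRq\{1\},\bfRq\{x\},\bfRq\{xy\},\bfE,\bfEdual,\bfBzero\}$, produces precisely the displayed lattice.
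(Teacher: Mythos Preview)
Your treatment of part~(ii) is correct and matches the paper's: any proper subvariety of $\bfAzero\vee\bfQ$ omits $\monAzero$ or $\monQ$, so by Corollary~\ref{C: exclusion A0 Q} it lies in $\bfQ$ or $\bfAzero$, and the rest is read off from the known lattices $\frakL(\bfAzero)$ and $\frakL(\bfQ)$.

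Your argument for part~(i), however, has a genuine gap. The canonical form you describe fixes the interior blocks $\bfw_1,\ldots,\bfw_{m-1}$ up to exponent normalisation only, and you then assert that any remaining discrepancy ``in the order of its letters'' inside an interior block is detected by~$\monAzero$. This is false. Consider
\[
\bfu=x\,h_1\,ab\,h_2\,xab \quad\text{and}\quad \bfv=x\,h_1\,ba\,h_2\,xab,
\]
both in natural form with $\bfw_0=x$, $\bfw_1\in\{ab,ba\}$, $\bfw_2=xab$. They agree on skeleton, boundary blocks, and block contents, so $\monQ\models\bfu\approx\bfv$ by Lemma~\ref{L: id Q}. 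But $\monAzero$ also satisfies $\bfu\approx\bfv$: for any substitution $\varphi$ into $\monAzero=\{0,e,f,ef,1\}$, a product is zero precisely when some factor in $\{f,ef\}$ precedes some factor in $\{e,ef\}$, and since $a$ and $b$ each occur twice with the second occurrences in the same order in both words, one checks that $\varphi\bfu=0\iff\varphi\bfv=0$ (and when both are nonzero they have the same content, hence the same value). Thus $\bfAzero\vee\bfQ\models\bfu\approx\bfv$, yet $\bfu$ and $\bfv$ are distinct canonical forms in your sense. Your separation claim therefore fails, and with it the completeness argument; the basis~\eqref{id: basis A0 Q} must permit more rearrangement of interior blocks than your sketch allows. (The parenthetical justification for freely rearranging the boundary blocks via duplication alone is also incomplete: in $xy\,h\,yx$ one cannot manufacture the pattern $xy\cdots x\cdots y$ by duplication, and the derivation actually needs the \emph{other} swap identity to first convert the trailing $yx$ to $xy$.)

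The paper avoids all of this by invoking O.\,B.~Sapir \cite[Theorem~4.5]{SapO15a}, which already gives a basis for $\bfAzero\vee\bfQ$ of the form $\eqref{id: basis A0 Q}\cup\Sigma$ with $\Sigma\subseteq\eqref{id: O sub xhxhx}$; it then uses Lemma~\ref{L: id Q} to see that each $\sigma\in\Sigma$ has matching zero/nonzero exponents and is therefore a consequence of $\{\eqref{id: xxyx=xyx},\eqref{id: xyxx=xyx}\}$, so $\Sigma$ is redundant. If you want a self-contained canonical-form proof, you will need a finer normal form that also governs interior blocks.
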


\begin{proof}
(i) It is easily checked---either directly or by referring to Lemma~\ref{L: bases A B E Q}---that the monoids~$\monAzero$ and~$\monQ$ satisfy the identities~\eqref{id: basis A0 Q}.
It follows from O.B.\ Sapir \cite[Theorem~4.5]{SapO15a} that the variety $\bfAzero \vee \bfQ$ can be defined by $\eqref{id: basis A0 Q} \cup \Sigma$ for some set~$\Sigma$ of identities from~\eqref{id: O sub xhxhx}.
Consider any identity \[ \sigma: x^{e_0} \prod_{i=1}^m (h_ix^{e_i}) \approx x^{f_0} \prod_{i=1}^m (h_ix^{f_i}) \] from~$\Sigma$, where\, $e_0,f_0, e_1,f_1,\ldots,e_m,f_m \geq 0$;\, $\sum_{i=0}^me_i,\, \sum_{i=0}^mf_i \geq 2$;\, and $m \geq 0$.
It follows from Lemma~\ref{L: id Q} that for each $i \in \{ 0,1,\ldots,m \}$, either $e_i = f_i = 0$ or $e_i,f_i \geq 1$.
If $m \geq 1$, then the identity~$\sigma$ is clearly deducible from $\{ \eqref{id: xxyx=xyx}, \eqref{id: xyxx=xyx} \}$; if $m = 0$, then the identity~$\sigma$ is $x^{e_0} \approx x^{f_0}$ with $e_0,f_0 \geq 2$ and so is deducible from the consequence $x^3 \approx x^2$ of~\eqref{id: xxyx=xyx}. 
Therefore, the identities in~$\Sigma$ are not required in the definition of the variety $\bfAzero \vee \bfQ$, whence $\bfAzero \vee \bfQ = \var\{ \eqref{id: basis A0 Q} \}$.

(ii) If~$\bfV$ is any proper subvariety of $\bfAzero \vee \bfQ$, then either $\monAzero \notin \bfV$ or $\monQ \notin \bfV$, whence by Corollary~\ref{C: exclusion A0 Q}, either $\bfV \subseteq \bfQ$ or $\bfV \subseteq \bfAzero$.
Therefore, $\bfAzero$ and~$\bfQ$ are the only maximal subvarieties of $\bfAzero \vee \bfQ$.
Now the variety~$\bfBzero$ is the unique maximal subvariety of both~$\bfAzero$ and~$\bfQ$, and the lattice $\frakL(\bfBzero)$ is as given in the proposition \cite{Lee08,Lee14rm}.
\end{proof}

\section{Non-finitely generated almost Cross varieties} \label{sec: almost nfg}

This section presents almost Cross subvarieties of~$\bbJ$ that are non-finitely generated.
Results related to these varieties that are required later will also be established.

\subsection{The varieties \texorpdfstring{$\bfF$}{F} and \texorpdfstring{$\bfFdual$}{F-}} \label{subsec: var F}

Let~$\bfF$ denote the variety defined by the identities  \[ \eqref{id: xyxx=xyx}, \quad x^2hx \approx x^2h, \quad x^2y^2 \approx y^2x^2. \]
It is routinely checked that \[ \bfF = \bfJ_2 \{ \eqref{id: xyxx=xyx},\, x^2hx \approx x^2h \}. \]
Since $\monE \in \bfF \backslash \bfFdual$, the variety~$\bfF$ is not self-dual.

\begin{proposition}[{Gusev and Vernikov \cite[Proposition~6.1]{GusVer18}}] \label{P: F}
\begin{enumerate}[\rm(i)] 
\item The variety $\bfF$ is a non-finitely generated almost Cross variety.
\item The lattice $\frakL(\bfF)$ is isomorphic to the infinite chain $1<2<3<\cdots<\infty$.
\end{enumerate}
\end{proposition}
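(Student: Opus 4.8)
The plan is to establish (i) and (ii) together by analyzing the subvariety lattice of $\bfF$ directly. First I would record the standard "small-ideal" monoids witnessing the chain: for each $n\ge 1$ let $\monFsub$-type monoids $\monRq\{x^nh\}$ (or rather the Rees quotients $\monRq\{x^nh,\, x^ny^n\text{-free words}\}$ that fit the identities $\eqref{id: xyxx=xyx}$, $x^2hx\approx x^2h$, $x^2y^2\approx y^2x^2$). More precisely, I expect the correct description to be $\bfF=\bigvee_{n\ge 1}\bfF_n$ where $\bfF_n$ is generated by a finite monoid on words of bounded length, the $\bfF_n$ form a strictly increasing chain, and every proper subvariety of $\bfF$ is contained in some $\bfF_n$; this simultaneously gives the chain $1<2<3<\cdots<\infty$ in (ii) and the fact that $\bfF$ is non-finitely generated (it is not locally finite-by-... — rather, each $\bfF_n$ is finitely generated but their join is not, since a generating monoid would lie in some $\bfF_n$). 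The chain structure also makes $\bfF$ not small, while each proper subvariety $\bfF_n$ is Cross, which is exactly what "almost Cross" requires once we also check $\bfF$ is finitely based — and it is, by its very definition above via three identities.

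The key steps, in order: (1) Show $\bfF$ is locally finite. This is immediate from Lemma~\ref{L: LF FG max}(i) since $\bfF$ satisfies $\eqref{id: xyxx=xyx}$. (2) Classify the subvarieties of $\bfF$ by words/identities. Because $\bfF\subseteq\bfJ_2$, $\bfF\models x^2hx\approx x^2h$, and $\bfF\models x^2y^2\approx y^2x^2$, one can put words into a normal form in which, after the first occurrence of any variable squared, everything commutes and later occurrences of that variable are absorbed; the upshot is that a noncommutative subvariety of $\bfF$ is determined by the largest $n$ for which $x^n$ (equivalently the word $x^nh$ or $x^{n-1}hx$-type word) is an isoterm. (3) Identify the candidate subvarieties $\bfF_n = \bfF\{x^{n+1}\approx x^n\}$ (or the analogous truncation), show each is finitely generated — e.g.\ by the finite monoid $\monRq$ of the relevant finite set of words — hence Cross by Lemma~\ref{L: LF FG max}(ii) since it is locally finite, small, and finitely based. (4) Show the $\bfF_n$ are pairwise distinct and linearly ordered, and that every proper subvariety of $\bfF$ equals some $\bfF_n$ (using the isoterm characterization from Lemma~\ref{L: isoterm} and step (2)); conclude $\frakL(\bfF)\cong 1<2<\cdots<\infty$. (5) Conclude $\bfF$ itself is not small (infinite chain of subvarieties), hence not Cross; but it is finitely based by definition and locally finite, and every proper subvariety is Cross by step (3) — so $\bfF$ is almost Cross and non-finitely generated (a finite generator would sit in some $\bfF_n\subsetneq\bfF$).

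The main obstacle is step (2)/(4): getting a clean enough normal form for words modulo $\{\eqref{id: xyxx=xyx},\, x^2hx\approx x^2h,\, x^2y^2\approx y^2x^2,\, \text{\ref{id: aperiodic}}_2,\, \text{\ref{id: eventually}}_2\}$ to prove that every proper subvariety is exactly one of the $\bfF_n$, with no extra subvarieties hiding between them and no "two-dimensional" part of the lattice. Concretely one must show that any nontrivial identity valid in $\bfF$ but not in all of $\bfF$ forces a collapse $x^{k+1}\approx x^k$ for some $k$ — i.e.\ that there is no noncommutative subvariety of $\bfF$ strictly between $\bfF_n$ and $\bfF_{n+1}$, and that the commutative subvarieties (which by Lemma~\ref{L: comm CR}(iii) are the $\var\{\text{\ref{id: aperiodic}}_k, xy\approx yx\}$) are already absorbed into this chain. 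Since the proposition is attributed to Gusev and Vernikov \cite{GusVer18}, I would lean on that word-combinatorial classification rather than redo it, citing it for the structural claim and only verifying here the three consequences: local finiteness, finite basis, and the chain shape implying "almost Cross, non-finitely generated."
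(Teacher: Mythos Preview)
The paper gives no proof at all for this proposition: it is stated with the citation to Gusev and Vernikov \cite[Proposition~6.1]{GusVer18} and nothing more. Your proposal ultimately lands in the same place (``I would lean on that word-combinatorial classification rather than redo it''), so as far as what appears on the page, your approach and the paper's coincide.

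That said, the sketch you offer before falling back on the citation contains a concrete error worth flagging. Your candidate chain $\bfF_n=\bfF\{x^{n+1}\approx x^n\}$ cannot be the right one: $\bfF\subseteq\bfJ_2$, so $\bfF$ already satisfies $x^3\approx x^2$, and therefore $\bfF_n=\bfF$ for every $n\ge 2$. The same objection kills the claim in step~(4) that ``any nontrivial identity\dots forces a collapse $x^{k+1}\approx x^k$''. The actual subvarieties of~$\bfF$ are cut out by identities of a quite different shape; for instance the paper records
\[
\bfFsub=\bfF\{xyhxy\approx yxhxy\}
\]
and the bottom of the chain
\[
\bftrivial\subset\bfRq\{1\}\subset\bfRq\{x\}\subset\bfRq\{xy\}\subset\bfE\subset\bfFsub\subset\cdots\subset\bfF,
\]
and remarks explicitly that a description of every proper subvariety of~$\bfF$ ``is neither straightforward nor required''. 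So the combinatorics you would need for step~(2)/(4) is genuinely different from an aperiodicity truncation; if you want to sketch it, the relevant parameter is not the exponent of~$x$ but the length of certain alternating patterns in two variables. Citing \cite{GusVer18} is the right move here.
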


A description of every proper subvariety of~$\bfF$ is omitted since it is neither straightforward nor required in the present article.
But one subvariety of~$\bfF$ that is useful later is \[ \bfFsub = \bfF\{xyhxy \approx yxhxy\} = \var\{ \eqref{id: xyxx=xyx}, \; x^2hx \approx x^2h, \; xyhxy \approx yxhxy\}, \] whose subvarieties constitute a lower region of the lattice $\frakL(\bfF)$: \[ \bftrivial \subset \bfRq\{1\} \subset \bfRq\{x\} \subset \bfRq\{xy\} \subset \bfE \subset \bfFsub \subset \cdots \subset \bfF; \] see Gusev and Vernikov \cite[Chapter~6.1]{GusVer18} for more information on all subvarieties of~$\bfF$.
In particular, $\bfE$ is the unique maximal subvariety of~$\bfFsub$.
It follows that~$\bfFsub$ is generated by any one of its monoids that does not belong to~$\bfE$, for instance, \begin{align*} \monFsub & = \langle a,b,1 \,|\, a^2=b^2a=0,\, ab=a,\, b^3 = b^2\rangle \\ & = \{ 0,\, a,\, b,\, ba,\, b^2,\, 1\}. \end{align*}

\begin{lemma} \label{L: excl Fsub Edual}
Let~$\bfV$ be any subvariety of~$\bfJ_2\{ \eqref{id: xyxx=xyx} \}$.
Then
\begin{enumerate}[\rm(i)]
\item $\monFsub \notin \bfV$ if and only if $\bfV \models \eqref{id: xxyx=xyx}$\textup;
\item $\bfF \nsubseteq \bfV$ and $\monEdual \notin \bfV$ imply that~$\bfV$ is Cross.
\end{enumerate}
\end{lemma}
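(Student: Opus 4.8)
The plan is to work throughout inside a relatively free monoid of $\bfV$. Since $\bfV\subseteq\bfJ_2\{\eqref{id: xyxx=xyx}\}$, Lemma~\ref{L: LF FG max}(i) makes $\bfV$ locally finite, so its relatively free monoid $F=F_\bfV(\{x,h\})$ on two letters is finite, and two words over $\{x,h\}$ are equal in $F$ exactly when $\bfV$ satisfies the corresponding identity. For part~(i) one implication is a one-line substitution: putting $x\mapsto b$, $h\mapsto a$ in~\eqref{id: xxyx=xyx} gives $b^2ab=0\ne ba=bab$ in $\monFsub$, so $\monFsub$ fails~\eqref{id: xxyx=xyx} and hence lies outside $\bfV$ whenever $\bfV\models\eqref{id: xxyx=xyx}$.

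For the converse I would argue contrapositively: assuming $\bfV\not\models\eqref{id: xxyx=xyx}$, exhibit $\monFsub$ as a quotient of a submonoid of $F$. First I would record the consequences that drive the construction: $\bfV$ is not idempotent (idempotency gives $x^2\approx x$, hence~\eqref{id: xxyx=xyx}); $\bfV\not\models x^2hx\approx hx^2$ (left-multiplying such an identity by $x$ and using $x^3\approx x^2$ and~\eqref{id: xyxx=xyx} yields~\eqref{id: xxyx=xyx}); $\bfV\not\models xhx\approx hx^2$ (that identity would put $\bfV$ inside $\bfEdual=\bfJ_2\{xhx\approx hx^2\}$, which satisfies~\eqref{id: xxyx=xyx}); and $\bfV\not\models xhxhx\approx xhx$ (using $(xy)^2\approx(yx)^2$ to rewrite $xhxhx\approx x^2hxh$, then substituting $x\mapsto1$ would give $h^2\approx h$, contradicting non-idempotency). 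Next I would take the submonoid $T$ of $F$ generated by $x$ and $hx^2$, set $b:=x$, $a:=hx^2$, and show that the five elements $1,\,x,\,x^2,\,hx^2,\,xhx$ of $T$ are pairwise distinct, that $I:=T\setminus\{1,x,x^2,hx^2,xhx\}$ is an ideal of $T$, and that $b\mapsto x$, $a\mapsto hx^2$ extends to an isomorphism $\monFsub\cong T/I$. The awkward coincidences ($xhx\ne x^2hx$, $xhx\ne hx^2$, $x^2hx\ne hx^2$, $xhxhx\ne xhx$) are precisely ruled out by the four consequences above, and content preservation in subvarieties of $\bbJ$ disposes of the rest. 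Since a quotient of a submonoid of $F\in\bfV$ belongs to $\bfV$, this gives $\monFsub\in\bfV$.

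For part~(ii), $\bfV$ is again locally finite, and the heart of the matter is the claim: \emph{if $\bfV\not\models x^2hx\approx x^2h$ then $\monEdual\in\bfV$}. Granting the claim, the dichotomy is short. If $\bfV\models x^2hx\approx x^2h$ then $\bfV\subseteq\bfJ_2\{\eqref{id: xyxx=xyx},\,x^2hx\approx x^2h\}=\bfF$; since $\bfF\nsubseteq\bfV$, this makes $\bfV$ a proper subvariety of the almost Cross variety $\bfF$ (Proposition~\ref{P: F}(i)), hence Cross. If instead $\bfV\not\models x^2hx\approx x^2h$, the claim yields $\monEdual\in\bfV$, contradicting $\monEdual\notin\bfV$, so this case cannot occur. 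To prove the claim I would again work in $F=F_\bfV(\{x,h\})$: put $e:=x^2$, $g:=x^2h$, let $T$ be the submonoid generated by $e$ and $g$, and compute $e^2=e$ (from $x^3\approx x^2$), $eg=g$, $ge=x^2hx$ (via~\eqref{id: xyxx=xyx}), and $g^2=(ge)g$, so the ideal $I$ of $T$ generated by $ge$ contains $ge$ and $g^2$. It then remains to verify $1,e,g\notin I$ and that $1,e,g,ge$ are pairwise distinct, for then $T/I\cong\monEdual$. This is where the hypotheses enter: $\bfV$ is non-trivial and non-idempotent (else $\bfV\subseteq\bfJ_1\subseteq\bfF$, contradicting $\bfV\not\models x^2hx\approx x^2h$); $e\ne g$ because $x^2\approx x^2h$ would imply $x^2hx\approx x^2h$; $ge\ne e$ because $x^2hx\approx x^2$ becomes $h\approx1$ under $x\mapsto1$; $ge\ne g$ is the standing hypothesis; $e\notin I$ by content preservation (any product $u\cdot x^2hx\cdot v$ has $h$ in its content, while $x^2$ does not); and $g\notin I$ by substituting $x\mapsto1$ into $x^2h=u\cdot x^2hx\cdot v$ and invoking $h^2\ne h$, which forces $u,v\in\{1,x^2\}$, whereupon each of the four resulting equalities reduces (via $x^3\approx x^2$ and~\eqref{id: xyxx=xyx}) to $x^2h\approx x^2hx$, contrary to the hypothesis.

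The step I expect to be the main obstacle is, in both parts, the verification that the prescribed subset of $T$ is genuinely an ideal and that $T/I$ has the intended multiplication table: these are finitary but lengthy computations in the finite $J$-trivial monoid $F$, powered entirely by $x^3\approx x^2$, $(xy)^2\approx(yx)^2$,~\eqref{id: xyxx=xyx}, content preservation, and the few failures extracted from the hypotheses. (It is also possible that the claim used in part~(ii) is already available in the form ``$\monEdual\notin\bfV\iff\bfV\models x^2hx\approx x^2h$'' for $\bfV\subseteq\bfJ_2\{\eqref{id: xyxx=xyx}\}$, in which case part~(ii) collapses to the two-line dichotomy above.)
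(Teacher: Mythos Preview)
Your proposal is correct but takes a genuinely different route from the paper. For part~(i), the paper disposes of the completely regular case and then simply cites Gusev~\cite[Lemma~2.10]{Gus25ijac} for the rest, whereas you build $\monFsub$ by hand as a divisor of the two-generated relatively free monoid of~$\bfV$; your four ``awkward coincidences'' together with content preservation and non-idempotency do suffice to make $I$ an ideal and to identify $T/I$ with $\monFsub$ (the fourth coincidence $xhxhx\ne xhx$ is in fact already covered by substituting $x\mapsto1$, which gives $h^2\approx h$ directly, so the detour through $\text{\ref{id: eventually}}_2$ is unnecessary). For part~(ii), the paper does not prove your claim at all: instead it observes that $\monEdual\notin\bfV$ forces $\monBzero\notin\bfV$ (since $\bfEdual\subseteq\bfBzero$) and then invokes Almeida's trichotomy (Lemma~\ref{L: exclusion A0 Q B0}(iii)) to obtain one of $x^2\approx x$, $x^2hx^2\approx x^2h$, or $x^2hx^2\approx hx^2$, each of which is reduced, using~\eqref{id: xyxx=xyx} and~$\text{\ref{id: aperiodic}}_2$, to a known Cross situation---respectively $\bfJ_1$, a proper subvariety of~$\bfF$, or a subvariety of $\var\{\text{\ref{id: aperiodic}}_2,\,xhx\approx hx^2\}$ (Cross by Lemma~\ref{L: xyx not isoterm}(ii)). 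Your approach is self-contained and in fact yields the sharper statement $\monEdual\notin\bfV\iff\bfV\models x^2hx\approx x^2h$ within $\bfJ_2\{\eqref{id: xyxx=xyx}\}$; the paper's is shorter on the page because the structural work is outsourced to the cited results.
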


\begin{proof}
(i) If~$\bfV$ is completely regular, then $\bfV \subseteq \bfJ_1$ by Lemma~\ref{L: comm CR}(i), so that both $\monFsub \notin \bfV$ and $\bfV \models \eqref{id: xxyx=xyx}$ hold.
If~$\bfV$ is not completely regular, then the result follows from Gusev \cite[Lemma~2.10]{Gus25ijac}.
 
(ii) Suppose that $\bfF \nsubseteq \bfV$ and $\monEdual \notin \bfV$.
Then since $\bfEdual \subseteq \bfBzero$ by Proposition~\ref{P: A0 vee Q}(ii), it follows from the assumption $\monEdual \notin \bfV$ that $\monBzero \notin \bfV$.
Therefore, by Lemma~\ref{L: exclusion A0 Q B0}(iii), either $\bfV \models x^2 \approx x$, $\bfV \models x^2hx^2 \approx x^2h$, or $\bfV \models x^2hx^2 \approx hx^2$.

\medskip

\noindent\textsc{Case~1}: $\bfV \models x^2 \approx x$. 
Then~$\bfV$ is completely regular and so is Cross by Lemma~\ref{L: comm CR}.

\medskip

\noindent\textsc{Case~2}: $\bfV$ satisfies $\sigma: x^2hx^2 \approx x^2h$.
Then since \[ \bfV \models x^2h \stackrel{\sigma}{\approx} x^2hx^2 \stackrel{\eqref{id: xyxx=xyx}}{\approx} x^2hx \vdash x^2hx \approx x^2h, \] the inclusion $\bfV \subseteq \bfJ_2 \{ \eqref{id: xyxx=xyx},\, x^2hx \approx x^2h \} = \bfF$ holds. 
But this inclusion is proper due to the assumption $\bfV \neq \bfF$.
It follows that~$\bfV$ is Cross since~$\bfF$ is almost Cross by Proposition~\ref{P: F}(i).

\medskip

\noindent\textsc{Case~3}: $\bfV$ satisfies $\tau: x^2hx^2 \approx hx^2$.
Then since \[ \bfV \models hx^2 \stackrel{\tau}{\approx} x^2hx^2 \stackrel{\text{\ref{id: aperiodic}}_2}{\approx} xx^2hx^2 \stackrel{\tau}{\approx} xhx^2 \stackrel{\eqref{id: xyxx=xyx}}{\approx} xhx \vdash xhx \approx hx^2, \] it follows from Lemma~\ref{L: xyx not isoterm}(ii) that~$\bfV$ is Cross.
\end{proof}

\subsection{The varieties \texorpdfstring{$\bfH$}{H} and \texorpdfstring{$\bfHdual$}{H-}} \label{subsec: var H}

Let~$\bfH$ denote the variety defined by the identities $\text{\ref{id: eventually}}_2$, \eqref{id: xxyx=xyx}, \eqref{id: xyxx=xyx}, and \begin{equation} xhxy^2x \approx xhy^2x; \label{id: basis H} \end{equation} in other words, \[ \bfH = \bfJ_2 \{ \eqref{id: xxyx=xyx},\, \eqref{id: xyxx=xyx},\, \eqref{id: basis H} \}. \]
It is routinely checked that $\bfH \neq \bfHdual$ and that the monoids~$\monAzero$ and~$\monQ$ belong to both~$\bfH$ and~$\bfHdual$.

The main goal of this subsection is to show that the variety~$\bfH$ is almost Cross and to provide a complete description of its lattice $\frakL(\bfH)$ of subvarieties.
Since \[ \frakL(\bfH) = \frakL(\bfAzero \vee \bfQ) \cup [\bfAzero \vee \bfQ,\,\bfH] \] by Corollary~\ref{C: exclusion A0 Q}(iii) and a description of the lattice $\frakL(\bfAzero \vee \bfQ)$ can be found in Proposition~\ref{P: A0 vee Q}(ii), it suffices to describe the interval $[\bfAzero \vee \bfQ,\,\bfH]$.

Recall that $\text{\ref{id: Hsub}}_m$ denotes the restrictive identity \[ xy \prod_{i=1}^m (t_i\bfa_i) \approx yx \prod_{i=1}^m (t_i\bfa_i), \] where $(\bfa_1,\bfa_2,\bfa_3,\bfa_4,\ldots)$ is the alternating sequence $(x,y,x,y,\ldots)$.
For each $m \geq 2$, define \[ \bfH_m = \bfH\{ \text{\ref{id: Hsub}}_m \}. \]

\begin{lemma} \label{L: H2}
$\bfH_2 = \bfAzero \vee \bfQ$.
\end{lemma}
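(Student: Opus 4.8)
The goal is to prove the equality $\bfH_2 = \bfAzero \vee \bfQ$ of varieties, which splits naturally into two inclusions. For the inclusion $\bfAzero \vee \bfQ \subseteq \bfH_2$, I would use the explicit basis for $\bfAzero \vee \bfQ$ supplied by Proposition~\ref{P: A0 vee Q}(i), namely $\{\eqref{id: xxyx=xyx}, \eqref{id: xyxx=xyx}, xyhxty \approx yxhxty, xhytxy \approx xhytyx\}$. It suffices to check that every defining identity of $\bfH_2$ — that is, $\text{\ref{id: eventually}}_2$, \eqref{id: xxyx=xyx}, \eqref{id: xyxx=xyx}, \eqref{id: basis H}, and $\text{\ref{id: Hsub}}_2$ (the identity $xy\,t_1x\,t_2y \approx yx\,t_1x\,t_2y$) — is a consequence of these four. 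The identities \eqref{id: xxyx=xyx} and \eqref{id: xyxx=xyx} are literally in the basis; $\text{\ref{id: eventually}}_2$ follows since $\{\eqref{id: xxyx=xyx},\eqref{id: xyxx=xyx}\}$ forces aperiodicity of index $2$ and then eventual commutativity is a short deduction; \eqref{id: basis H}, where the middle block $y^2$ can be collapsed to $y$ using \eqref{id: xyxx=xyx} after inserting, is a routine rewriting using \eqref{id: xxyx=xyx}; and $\text{\ref{id: Hsub}}_2$ follows from $xyhxty \approx yxhxty$ by the substitution $h \mapsto t_1$, $t \mapsto t_2$ (here $x,y$ already appear to the right in the needed multiplicities). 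So this direction reduces to a handful of one-line deductions.

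**The reverse inclusion.** For $\bfH_2 \subseteq \bfAzero \vee \bfQ$, I would instead invoke the structural results of Section~\ref{subsec: O} together with Corollary~\ref{C: exclusion A0 Q}. First note $\bfH \subseteq \bfO \cap \bfJ_2$: the identity \eqref{id: basis H} (or \eqref{id: xyxx=xyx}) should imply the two defining identities \eqref{id: O} of $\bfO$ modulo $\{\text{\ref{id: aperiodic}}_2,\text{\ref{id: eventually}}_2\}$ — this needs a brief check but is exactly the kind of collapse that \eqref{id: basis H} and the alternating-block identities are designed for. Granting $\bfH_2 \subseteq \bfO \cap \bfJ_2$, Proposition~\ref{P: subvarieties of O} applies: $\bfH_2$, if noncommutative, is defined by $\{\eqref{id: O},\text{\ref{id: aperiodic}}_2,\text{\ref{id: eventually}}_2\} \cup \Sigma_1 \cup \Sigma_2$ with $\Sigma_1 \subseteq \eqref{id: Oc sub}$ and $\Sigma_2 \subseteq \eqref{id: O J2 sub yhpc=yhqc}$. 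The key point is that $\bfH_2$ satisfies $\text{\ref{id: Hsub}}_2$ and \eqref{id: basis H}, which are strong enough to derive \emph{all} the identities in \eqref{id: O J2 sub yhpc=yhqc} (an identity $\bfh\bfp\bfc \approx \bfh\bfq\bfc$ there has $|\bfp|_x=|\bfq|_x, |\bfp|_y=|\bfq|_y$, so $\text{\ref{id: Hsub}}_2$-type swapping plus the collapsing identities reduce both sides to a common word) and similarly the relevant members of \eqref{id: Oc sub} that $\monAzero,\monQ$ satisfy. Concretely, I expect to show directly that $\bfH_2 \models xyhxty \approx yxhxty$ and $\bfH_2 \models xhytxy \approx xhytyx$, the two non-trivial identities in the basis \eqref{id: basis A0 Q} of $\bfAzero \vee \bfQ$; combined with \eqref{id: xxyx=xyx} and \eqref{id: xyxx=xyx} (which $\bfH_2$ satisfies by definition), this gives $\bfH_2 \subseteq \var\{\eqref{id: basis A0 Q}\} = \bfAzero \vee \bfQ$ and finishes the proof without needing the full classification — the cleanest route.

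**Main obstacle.** The crux is the second inclusion, and specifically deriving $xhytxy \approx xhytyx$ (and $xyhxty \approx yxhxty$) inside $\bfH_2$ from $\{\eqref{id: xxyx=xyx},\eqref{id: xyxx=xyx},\eqref{id: basis H},\text{\ref{id: Hsub}}_2\}$. The difficulty is that $\text{\ref{id: Hsub}}_2$ only directly permits swapping an $x,y$ prefix when followed by a block of the form $t_1x\,t_2y$ (alternating, with simple $t_i$'s), whereas the target identities have the swapped pair buried after $xhyt$ and followed by $xy$ or $yx$ with no intervening simple variables. I would bridge this by: (a) using \eqref{id: basis H} and \eqref{id: xxyx=xyx}/\eqref{id: xyxx=xyx} to normalize occurrences of $x$ and $y$ so that the word takes a canonical shape where the trailing $xy$/$yx$ can be manipulated, (b) temporarily introducing auxiliary simple variables via substitution to put the word in $\text{\ref{id: Hsub}}_2$-applicable form, applying the swap, then substituting back. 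Verifying that these substitutions are legitimate (i.e. that the intermediate identities are genuinely deducible, respecting the requirement that deduction steps use distinct words) is the part that requires care; everything else is bookkeeping. As a fallback, if the direct deduction proves awkward, one can instead argue semantically: show $\bfH_2$ is locally finite (via Lemma~\ref{L: LF FG max}(i)), that $\monAzero,\monQ \in \bfH_2$ give $\bfAzero \vee \bfQ \subseteq \bfH_2$, and that any monoid in $\bfH_2$ satisfies all identities of $\bfAzero \vee \bfQ$ by checking against Lemma~\ref{L: id Q} and the basis of $\bfAzero$ in Lemma~\ref{L: bases A B E Q}(i) — but the identity-theoretic route above is more in keeping with the paper's style.
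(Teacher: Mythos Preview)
Your overall strategy matches the paper's: show both inclusions, and for $\bfH_2 \subseteq \bfAzero \vee \bfQ$ verify that $\bfH_2$ satisfies each of the four identities~\eqref{id: basis A0 Q}. You also correctly identify the crux: the only non-obvious identity is $xhytxy \approx xhytyx$, since $\eqref{id: xxyx=xyx}$ and $\eqref{id: xyxx=xyx}$ are in the definition of~$\bfH_2$ and $xyhxty \approx yxhxty$ is literally~$\text{\ref{id: Hsub}}_2$ after renaming.

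However, your proposed mechanism for that crux deduction has a genuine gap. Idea~(b) --- ``temporarily introducing auxiliary simple variables via substitution \ldots\ then substituting back'' --- is not a legitimate deduction step: substitution goes one way only, and you cannot conjure new simple variables in the middle of a word and later remove them. More importantly, you are looking in the wrong place: the paper derives $xhytxy \approx xhytyx$ \emph{without using $\text{\ref{id: Hsub}}_2$ at all}. The whole deduction takes place in~$\bfH$, using only $\eqref{id: xyxx=xyx}$, $\eqref{id: basis H}$, and $\text{\ref{id: eventually}}_2$. The trick is to inflate the tail $txy$ to $txyxy$ via $\eqref{id: xyxx=xyx}$ and $\eqref{id: basis H}$, apply $\text{\ref{id: eventually}}_2$ to get $tyxyx$, and then use $\eqref{id: basis H}$ twice (first to absorb a factor $y$, then a factor~$x$) together with $\eqref{id: xyxx=xyx}$ to collapse back to $tyx$. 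Once you see that $\eqref{id: basis H}$ lets you delete a middle occurrence of~$x$ (or~$y$, by symmetry of roles) whenever it sits between a prior occurrence and a squared block, the chain is routine; no swapping identity is needed.

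For the inclusion $\bfAzero \vee \bfQ \subseteq \bfH_2$, the paper simply checks that the generating monoids $\monAzero$ and~$\monQ$ satisfy the defining identities of~$\bfH_2$; your plan to instead deduce those identities from the basis~\eqref{id: basis A0 Q} is fine too, and essentially equivalent. The detour you sketch through Proposition~\ref{P: subvarieties of O} is unnecessary here (and note that proposition comes \emph{after} this lemma would naturally be used in the paper's development, so relying on it risks circularity in some orderings).
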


\begin{proof}
It is easily checked that $\monAzero$ and $\monQ$ satisfy the identities $\{ \text{\ref{id: eventually}}_2, \eqref{id: xxyx=xyx}, \eqref{id: xyxx=xyx}, \eqref{id: basis H}, \text{\ref{id: Hsub}}_2 \}$ that define~$\bfH_2$, so the inclusion $\bfAzero \vee \bfQ \subseteq \bfH_2$ holds.
To establish the reverse inclusion, it suffices to show that~$\bfH_2$ satisfies the four identities from~\eqref{id: basis A0 Q} that define $\bfAzero \vee \bfQ$; see Proposition~\ref{P: A0 vee Q}(i).
The first three identities $\{\eqref{id: xxyx=xyx}, \eqref{id: xyxx=xyx}, \text{\ref{id: Hsub}}_2 \}$ from~\eqref{id: basis A0 Q} are clearly satisfied by~$\bfH_2$.
The fourth identity $xhytxy \approx xhytyx$ from~\eqref{id: basis A0 Q} is also satisfied by~$\bfH_2$ because
\begin{align*}
\bfH \models xhytxy & \stackrel{\eqref{id: xyxx=xyx}}{\approx} xhytxx^2y \stackrel{\eqref{id: basis H}}{\approx} xhytxyx^2y \stackrel{\eqref{id: xyxx=xyx}}{\approx} xhytxyxy \\ & \stackrel{\text{\ref{id: eventually}}_2}{\approx} xhytyxyx \stackrel{\eqref{id: xyxx=xyx}}{\approx} xhytyx^2y^2x \stackrel{\eqref{id: basis H}}{\approx} xhytx^2y^2x \\ & \stackrel{\eqref{id: basis H}}{\approx} xhyty^2x \stackrel{\eqref{id: xyxx=xyx}}{\approx} xhytyx \vdash xhytxy \approx xhytyx. \qedhere
\end{align*}
\end{proof}

\begin{lemma} \label{L: H other id}
The variety~$\bfH$ satisfies the identities
\begin{align}
xy^2x & \approx xyxy, \label{id: H xyyx=xyxy} \\
xy^2x & \approx yxyx, \label{id: H xyyx=yxyx} \\
xyhxy & \approx yxhxy, \label{id: H xyhxy=yxhxy} \\
xhxyxty & \approx xhyxty, \label{id: H O1} \\ 
xhytxyx & \approx xhytyx. \label{id: H O2}
\end{align} 
\end{lemma}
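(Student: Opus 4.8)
The plan is to obtain each of the five identities by an explicit sequence of elementary deductions from the identities defining $\bfH$ — namely $\text{\ref{id: eventually}}_2$, $\eqref{id: xxyx=xyx}$, $\eqref{id: xyxx=xyx}$, $\eqref{id: basis H}$, together with their easy consequence $x^3\approx x^2$ — carried out in an order that lets the later identities use the earlier ones. Three features of this deductive system do most of the work. First, $\eqref{id: xxyx=xyx}$ permits creating or deleting a repeated occurrence of a variable at the \emph{left} end of any factor of the form $zwz$, and $\eqref{id: xyxx=xyx}$ does the same at the \emph{right} end. Second, $\eqref{id: basis H}$ permits deleting the \emph{middle} occurrence of a variable from any factor of the form $z\,w\,z\,v^2z$ — but only when the block $v^2$ between the last two copies is a genuine square. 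Third, $\text{\ref{id: eventually}}_2$ and, once available, $\eqref{id: H xyyx=xyxy}$ let one rotate a factor $(pq)^2$ into $(qp)^2$ or into $pq^2p$. The recurring tactic is thus: inflate a word by duplicating a variable, use a rotation to slide a square into position, contract by $\eqref{id: basis H}$, and deflate again.

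I would first prove $\eqref{id: H xyyx=xyxy}$ via the chain
\[ xy^2x \approx xy^3x \approx xyxy^2x = (xyxy)(yx) \approx (yxyx)(yx) = (yx)^3 \approx (yx)^2 \approx (xy)^2 = xyxy, \]
where the steps use, in order, $x^3\approx x^2$ (on $y$); then $\eqref{id: basis H}$ with $h$ replaced by $y$ (which reads $xyxy^2x\approx xy^3x$); then $\text{\ref{id: eventually}}_2$; then $x^3\approx x^2$ with $x$ replaced by $yx$; then $\text{\ref{id: eventually}}_2$ again. Since this chain passes through $yxyx=(yx)^2$, the identity $\eqref{id: H xyyx=yxyx}$ follows immediately.

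Next I would establish $\eqref{id: H O1}$ and $\eqref{id: H O2}$ by short three-step chains. For $\eqref{id: H O1}$,
\[ xhxyxty \approx xhxy^2xty \approx xhy^2xty \approx xhyxty, \]
the first step being $\eqref{id: xxyx=xyx}$ used in reverse with $x\mapsto y$ and $h\mapsto xt$ (so that the factor $yxty$ becomes $y^2xty$); the middle step being $\eqref{id: basis H}$, which may now delete the $x$ immediately after $h$ because the square $y^2$ has been placed between two copies of $x$; and the last step being $\eqref{id: xxyx=xyx}$ used forward. The identity $\eqref{id: H O2}$ is the exact left–right mirror and is obtained the same way with $\eqref{id: xyxx=xyx}$ in place of $\eqref{id: xxyx=xyx}$:
\[ xhytxyx \approx xhytxy^2x \approx xhyty^2x \approx xhytyx. \]

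The last and most delicate identity is $\eqref{id: H xyhxy=yxhxy}$. Reading $xyhxy$ as $(xy)\,h\,(xy)$, one applies $\eqref{id: xyxx=xyx}$ in reverse with $x\mapsto xy$ to obtain $xyhxyxy$, rewrites the factor $xyxy=(xy)^2$ by $\eqref{id: H xyyx=xyxy}$ into $xy^2x$ (reaching $xyhxy^2x$), deletes the $x$ immediately after $xyh$ by $\eqref{id: basis H}$ (again enabled by the newly positioned $y^2$), and contracts the block $y^2$ by $\eqref{id: xyxx=xyx}$, landing on $xyhyx$; thus $\bfH\models xyhxy\approx xyhyx$. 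To finish, it suffices to deduce $xyhyx\approx yxhxy$, and I would attempt this by a second, longer, deduction of the same inflate–rotate–contract kind, now also calling on $\text{\ref{id: eventually}}_2$, $\eqref{id: H xyyx=yxyx}$, and renamings of the identities already proven. I expect this final deduction to be the principal obstacle: one must route several applications of $\eqref{id: basis H}$ and $\text{\ref{id: eventually}}_2$ through intermediate words while ensuring that each use of $\eqref{id: basis H}$ acts on a bona-fide square and that every auxiliary occurrence introduced along the way is removed again, so that the multiplicity of each variable is conserved overall.
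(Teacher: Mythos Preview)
Your deductions of \eqref{id: H xyyx=xyxy}, \eqref{id: H xyyx=yxyx}, \eqref{id: H O1}, and \eqref{id: H O2} are correct; the chains for \eqref{id: H O1} and \eqref{id: H O2} are in fact exactly those in the paper. The gap is \eqref{id: H xyhxy=yxhxy}. You establish only the auxiliary identity $xyhxy \approx xyhyx$ and then stop, conceding that the passage to $yxhxy$ is ``the principal obstacle.'' But that passage is not a detail---it is essentially the whole content of \eqref{id: H xyhxy=yxhxy}: swapping $x\leftrightarrow y$ in your auxiliary identity yields $yxhyx\approx yxhxy$, so what remains is $xyhyx\approx yxhyx$, i.e., commuting the two leftmost letters across $h$, which is the original difficulty over again. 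Inflating on the \emph{right} of $h$, as you do, gives no leverage on what happens to the \emph{left} of $h$.

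The paper inflates on the left instead. From $xyhxy$ one uses \eqref{id: xxyx=xyx} with $x\mapsto xy$ to reach $(xy)^2hxy$, and then $\text{\ref{id: eventually}}_2$ immediately converts the prefix to $(yx)^2$, so the desired leading $yx$ is already in place. The remainder of the chain strips the surplus $yx$: inflate the $x$ after $h$ to $x^2$ via \eqref{id: xyxx=xyx}, apply \eqref{id: basis H} with the roles of $x,y$ swapped to insert a $y$ so that a factor $yx$ appears after $h$, collapse $(yx)^2$ to $yx$ via \eqref{id: xxyx=xyx} with $x\mapsto yx$, and then undo the insertions. The paper also reverses your order of proof: \eqref{id: H xyhxy=yxhxy} comes first, \eqref{id: H xyyx=yxyx} drops out by setting $h\mapsto 1$ and relabelling, and \eqref{id: H xyyx=xyxy} then follows from $\text{\ref{id: eventually}}_2$ and \eqref{id: H xyyx=yxyx}.
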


\begin{proof}
The variety~$\bfH$ satisfies the identities \eqref{id: H xyhxy=yxhxy}--\eqref{id: H O2} because
\begin{align*}
\bfH \models xyhxy & \stackrel{\eqref{id: xxyx=xyx}}{\approx} (xy)^2hxy \stackrel{\text{\ref{id: eventually}}_2}{\approx} (yx)^2hxy \stackrel{\eqref{id: xyxx=xyx}}{\approx} (yx)^2hx^2y \stackrel{\eqref{id: basis H}}{\approx} (yx)^2hyx^2y \\
& \stackrel{\eqref{id: xxyx=xyx}}{\approx} yxhyx^2y \stackrel{\eqref{id: basis H}}{\approx} yxhx^2y \stackrel{\eqref{id: xyxx=xyx}}{\approx} yxhxy \vdash \eqref{id: H xyhxy=yxhxy}; \\
\bfH \models xhxyxty & \stackrel{\eqref{id: xxyx=xyx}}{\approx} xhxy^2xty \stackrel{\eqref{id: basis H}}{\approx} xhy^2xty \stackrel{\eqref{id: xxyx=xyx}}{\approx} xhyxty \vdash \eqref{id: H O1}; \\
\bfH \models xhytxyx & \stackrel{\eqref{id: xyxx=xyx}}{\approx} xhytxy^2x \stackrel{\eqref{id: basis H}}{\approx} xhyty^2x \stackrel{\eqref{id: xyxx=xyx}}\approx xhytyx \vdash \eqref{id: H O2}.
\end{align*}
The deduction $\eqref{id: H xyhxy=yxhxy} \vdash \eqref{id: H xyyx=yxyx}$ holds vacuously, so that $\bfH \models \eqref{id: H xyyx=yxyx}$.
Hence \[ \bfH \models xyxy \stackrel{\text{\ref{id: eventually}}_2}{\approx} yxyx \stackrel{\eqref{id: H xyyx=yxyx}}{\approx} xy^2x \vdash \eqref{id: H xyyx=xyxy}. \qedhere \]
\end{proof}

\begin{lemma} \label{L: [A01veeQ,H]}
$[\bfAzero \vee \bfQ,\, \bfH] = \{ \bfH_m \,|\, m \geq 2\}$.
\end{lemma}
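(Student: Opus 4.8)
The plan is to show both inclusions in the equality $[\bfAzero \vee \bfQ,\, \bfH] = \{ \bfH_m \,|\, m \geq 2\}$. The inclusion $\{ \bfH_m \,|\, m \geq 2\} \subseteq [\bfAzero \vee \bfQ,\, \bfH]$ is the easy half: each $\bfH_m$ is by definition a subvariety of $\bfH$, and $\bfAzero \vee \bfQ = \bfH_2 \subseteq \bfH_m$ follows from Lemma~\ref{L: H2} once we check that $\text{\ref{id: Hsub}}_m \vdash \text{\ref{id: Hsub}}_2$ in the presence of the defining identities of $\bfH$ — indeed $\bfH \models \eqref{id: xyxx=xyx}$, so from $\text{\ref{id: Hsub}}_m$ one can, by the kind of argument in the proof of Corollary~\ref{C: O cap J2}, collapse the suffix $\prod_{i=1}^m (t_i\bfa_i)$ down to $\prod_{i=1}^2 (t_i\bfa_i)$ via substitutions $t_{i+1}\mapsto t_it_{i+1}$ and deletion of variables, together with $xyxx\approx xyx$ to remove doubled occurrences of $x$ or $y$. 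Hence $\bfH_m \in [\bfAzero \vee \bfQ,\, \bfH]$ for all $m \geq 2$.

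For the reverse inclusion, let $\bfV$ be any variety with $\bfAzero \vee \bfQ \subseteq \bfV \subseteq \bfH$; the goal is to produce an $m \geq 2$ with $\bfV = \bfH_m$. Since $\bfV \supseteq \bfAzero \vee \bfQ$, the variety $\bfV$ is noncommutative, and since $\bfV \subseteq \bfH \subseteq \bfJ_2\{\eqref{id: xxyx=xyx},\eqref{id: xyxx=xyx}\} \subseteq \bfO \cap \bfJ_2$ (using Lemma~\ref{L: H other id}, which gives $\bfH \models \eqref{id: H O1},\eqref{id: H O2}$, i.e. $\bfH \subseteq \bfO$), Proposition~\ref{P: subvarieties of O} applies: $\bfV$ is defined by $\{ \eqref{id: O}, \text{\ref{id: aperiodic}}_2, \text{\ref{id: eventually}}_2,\, \eqref{id: basis H} \} \cup \Sigma_1 \cup \Sigma_2$ for some $\Sigma_1 \subseteq \eqref{id: Oc sub}$ and some $\Sigma_2$ drawn from~\eqref{id: O J2 sub yhpc=yhqc}. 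The first task is to eliminate $\Sigma_1$: since $\bfV \supseteq \bfAzero \vee \bfQ$, no identity of $\Sigma_1$ may be satisfied by either $\monAzero$ or $\monQ$ unless it is a consequence of the identities already present; checking $\monAzero$ and $\monQ$ against the four identities listed in~\eqref{id: Oc sub} (namely $xhx\approx hx^2$, $x^2hx\approx x^2h$, $x^2hxtx\approx x^2htx$, and~\eqref{id: xxyx=xyx}) using Lemma~\ref{L: id Q} and the known structure of $\monAzero$ shows each such identity is refuted, so $\Sigma_1$ may be taken empty. The second, and main, task is to analyze $\Sigma_2$: every identity in $\Sigma_2$ must hold in $\bfH$ and must not separate $\bfV$ from $\bfAzero\vee\bfQ = \bfH_2$ in a way incompatible with $\bfV$ being sandwiched, and conversely must be derivable inside $\bfH$. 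The claim is that, modulo the identities of $\bfH$, every identity from~\eqref{id: O J2 sub yhpc=yhqc} that holds in $\bfH$ is equivalent to some restrictive identity $\text{\ref{id: Hsub}}_k$ — this is where one uses Lemma~\ref{L: H other id} heavily: $\eqref{id: H xyyx=xyxy}$ and $\eqref{id: H xyhxy=yxhxy}$ let one rewrite the prefixes $\bfh\bfp$, $\bfh\bfq$ (with $1\le|\bfp|_x=|\bfq|_x,|\bfp|_y=|\bfq|_y\le2$) into the normalized form $xy$ versus $yx$, absorbing the optional $\bfh = yh$ and reducing exponents. One then shows, via the same suffix-collapsing argument as in Corollary~\ref{C: O cap J2} Case~2, that the family $\{\text{\ref{id: Hsub}}_k : k\ge 2\}$ is linearly ordered by deducibility: $\text{\ref{id: Hsub}}_k \vdash \text{\ref{id: Hsub}}_{k'}$ whenever $k\le k'$. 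Therefore $\Sigma_2$, modulo $\bfH$, is equivalent to a single $\text{\ref{id: Hsub}}_m$ (the smallest index occurring, or to the empty set, in which case $\bfV = \bfH$ — but $\bfH$ is not finitely generated so this case must be excluded, or absorbed by saying $\bfH$ itself is the top and is reached as a limit; more carefully, if $\Sigma_2$ is empty then $\bfV = \bfH$, and we should interpret the statement as allowing $\bfH \notin [\bfAzero\vee\bfQ,\bfH]$ — so in fact $\Sigma_2$ must be nonempty since $\bfV \ne \bfH$ is forced, or we include $m=\infty$; I will reconcile this with the intended reading when the surrounding lattice description is set up). Thus $\bfV = \bfH\{\text{\ref{id: Hsub}}_m\} = \bfH_m$.

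The hard part will be the second task — proving that an arbitrary identity of the schematic family~\eqref{id: O J2 sub yhpc=yhqc} that is satisfied by $\bfH$ collapses, modulo the identities $\{\text{\ref{id: aperiodic}}_2,\text{\ref{id: eventually}}_2,\eqref{id: xxyx=xyx},\eqref{id: xyxx=xyx},\eqref{id: basis H}\}$, to a restrictive identity. The subtlety is that~\eqref{id: O J2 sub yhpc=yhqc} allows the prefix block $\bfh\bfp$ to carry up to two copies each of $x$ and $y$ (and an optional extra block $yh$), and the suffix block $\bfc$ to be one of several shapes; one must verify that $\bfH$ has exactly enough rewriting power (and no more) to normalize all of these to the $xy \leftrightarrow yx$ prefix while not collapsing distinct $\text{\ref{id: Hsub}}_m$ to one another — equivalently, that for each $m$ the monoid witnessing $\bfH_{m+1} \subsetneq \bfH_m$ (a Rees quotient of a suitable finite set of words built from the alternating pattern) genuinely fails $\text{\ref{id: Hsub}}_m$ but satisfies $\text{\ref{id: Hsub}}_{m+1}$ and all of $\bfH$'s identities. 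Establishing this strict separation — hence that the chain $\bfH_2 \supset \bfH_3 \supset \cdots$ is infinite and that $\bfH$ is its only upper bound — is the technical heart, and it is precisely what forces $\bfH$ to be non-finitely generated (the identity $\text{\ref{id: basis H}}$ alone is finitely based, but no finite subset of the $\text{\ref{id: Hsub}}_m$ suffices). I expect this separation argument to occupy the bulk of the proof.
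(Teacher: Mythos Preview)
Your overall architecture matches the paper's: use Proposition~\ref{P: subvarieties of O} to write $\bfV = \bfH(\Sigma_1 \cup \Sigma_2)$, discard $\Sigma_1$, and then show each surviving $\sigma \in \Sigma_2$ gives $\bfH\{\sigma\} = \bfH_m$ for some $m$. But the logic you attach to $\Sigma_1$ and $\Sigma_2$ is inverted, and this inversion breaks both tasks as you have written them.

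For $\Sigma_1$: you assert that checking $\monAzero$ and $\monQ$ against the four identities in~\eqref{id: Oc sub} ``shows each such identity is refuted''. This is false: the identity~\eqref{id: xxyx=xyx} is satisfied by both $\monAzero$ and $\monQ$ (indeed it is a defining identity of both $\bfAzero$ and $\bfQ$). The paper's argument is different: each identity in~\eqref{id: Oc sub} is either violated by~$\monQ$ \emph{or} already satisfied by~$\bfH$; in the latter case it is redundant in the definition of $\bfV = \bfH(\Sigma_1 \cup \Sigma_2)$ and may be dropped. You need both clauses.

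For $\Sigma_2$: you write that ``every identity in $\Sigma_2$ must hold in $\bfH$'' and then that the claim concerns identities from~\eqref{id: O J2 sub yhpc=yhqc} ``that hold in $\bfH$''. This is backwards. The identities in $\Sigma_2$ are the extra relations cutting $\bfH$ down to $\bfV$; if one of them held in $\bfH$ it would be redundant and could be deleted. The two constraints the paper actually imposes are (a) each $\sigma \in \Sigma_2$ holds in $\monAzero$ and $\monQ$ (because $\bfAzero \vee \bfQ \subseteq \bfV$), and (b) each $\sigma$ \emph{fails} in $\bfH$ (else drop it). Conditions (a) and (b) are then used repeatedly in the case analysis: (b) rules out, for instance, $\bfh = yh$ with $\sigma = \sigma_2$, and (a) rules out $\bfc \in \{1,\, txy,\, t_1x\}$. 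Without the correct orientation of these constraints your normalisation step has no traction. Two smaller points: the deducibility for the easy inclusion goes the other way ($\text{\ref{id: Hsub}}_2 \vdash \text{\ref{id: Hsub}}_m$, giving $\bfH_2 \subseteq \bfH_m$), and the strict separation of the $\bfH_m$ is \emph{not} part of this lemma---it is proved afterwards in Proposition~\ref{P: H} and is not the ``technical heart'' here.
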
 

\begin{proof}
It is easily seen that $\bfH_m \in [\bfAzero \vee \bfQ,\, \bfH]$ for all $m \geq 2$.
Conversely, let~$\bfV$ be any variety in $[\bfAzero \vee \bfQ,\, \bfH]$.
By Lemma~\ref{L: H other id}, the variety~$\bfH$ satisfies the identities $\{ \eqref{id: H O1},\eqref{id: H O2} \} = \{ \eqref{id: O} \}$ and so is a subvariety of $\bfO \cap \bfJ_2$.
Hence it follows from Proposition~\ref{P: subvarieties of O} that $\bfV = \bfH(\Sigma_1 \cup \Sigma_2)$ for some $\Sigma_1 \subseteq \eqref{id: Oc sub}$ and some $\Sigma_2 \subseteq \eqref{id: O J2 sub yhpc=yhqc}$.
Since $\bfAzero \vee \bfQ \subseteq \bfV = \bfH(\Sigma_1 \cup \Sigma_2)$, 
\begin{enumerate}[\ (a)]
\item $\monAzero$ and $\monQ$ satisfy every identity in $\Sigma_1 \cup \Sigma_2$.
\end{enumerate}
On the other hand, any identity in $\Sigma_1 \cup \Sigma_2$ that is satisfied by~$\bfH$ is redundant in the definition of $\bfV = \bfH(\Sigma_1 \cup \Sigma_2)$.
Hence it can further be assumed that
\begin{enumerate}[\ (a)]
\item[(b)] $\bfH$ does not satisfy any identity in $\Sigma_1 \cup \Sigma_2$.
\end{enumerate}

It is easily checked that each identity from~\eqref{id: Oc sub} is either not satisfied by~$\monQ$ or satisfied by~$\bfH$.
Therefore, by~(a) and~(b), none of the identities from~\eqref{id: Oc sub} belongs to~$\Sigma_1$. 
Hence $\Sigma_1$ is empty and so $\bfV = \bfH\Sigma_2$.
Consider any identity from $\Sigma_2 \subseteq \eqref{id: O J2 sub yhpc=yhqc}$, say \[ \sigma: \bfh\bfp\bfc \approx \bfh\bfq\bfc, \] where $\bfh \in \{ 1,\, yh\}$, $\bfp,\bfq \in \{x,y\}^+$, and $\bfc \in \big\{ 1,\, txy,\, \prod_{i=1}^m (t_i\bfa_i), \,\prod_{i=2}^{m+1} (t_i\bfa_i) \,\big|\, m \geq 1 \big\}$ with \[ 1 \leq |\bfp|_x = |\bfq|_x,\, |\bfp|_y = |\bfq|_y \leq 2; \quad |\bfh\bfp\bfc|_x = |\bfh\bfq\bfc|_x,\, |\bfh\bfp\bfc|_y = |\bfh\bfq\bfc|_y \geq 2; \] and $(\bfa_1,\bfa_2,\bfa_3,\bfa_4,\ldots) = (x,y,x,y,\ldots)$.
In the following, it is shown that $\bfH\{\sigma\} = \bfH_m$ for some $m \geq 2$.
The proof is then complete since the inclusions $\bfH_2 \subseteq \bfH_3 \subseteq \bfH_4 \subseteq \cdots$ are easily checked.

Now according to the above conditions, the words~$\bfp$ and~$\bfq$ can be any of \[ x^ry^s, \quad y^rx^s, \quad xy^rx, \quad yx^ry, \quad xyxy, \quad yxyx, \quad r,s \in \{1,2\}. \]
Since~$x$ and~$y$ are non-simple variables of both $\bfh\bfp\bfc$ and $\bfh\bfq\bfc$, the identities $\{ \eqref{id: xxyx=xyx}, \eqref{id: xyxx=xyx} \}$ can be used to convert~$\sigma$ into $\bfh\bfp'\bfc \approx \bfh\bfq'\bfc$, where~$\bfp'$ and~$\bfq'$ can be any of \[ x^2y^2, \quad y^2x^2, \quad xy^2x, \quad yx^2y, \quad xyxy, \quad yxyx. \]
The identities $\{ \text{\ref{id: eventually}}_2,\eqref{id: H xyyx=xyxy}, \eqref{id: H xyyx=yxyx} \}$ can then be used to convert $\bfh\bfp'\bfc \approx \bfh\bfq'\bfc$ into $\bfh\bfp''\bfc \approx \bfh\bfq''\bfc$, where $\bfp'',\bfq'' \in \{ x^2y^2,\, y^2x^2,\, yxyx \}$.
In summary, the identities $\{ \eqref{id: xxyx=xyx}, \eqref{id: xyxx=xyx}, \text{\ref{id: eventually}}_2,\eqref{id: H xyyx=xyxy}, \eqref{id: H xyyx=yxyx} \}$ of~$\bfH$ can be used to convert~$\sigma$ into $\bfh\bfp''\bfc \approx \bfh\bfq''\bfc$.
Therefore $\bfH\{ \sigma \} = \bfH\{\bfh\bfp''\bfc \approx \bfh\bfq''\bfc\}$, whence one may assume that
\begin{enumerate}[\ (a)]
\item[(c)] $\bfp,\bfq \in \{ x^2y^2,\, y^2x^2,\, yxyx \}$.
\end{enumerate}

Suppose that $\bfh = yh$.
Then by~(b) and~(c), the identity $\sigma$ can be any of the following: \[ \sigma_1: yh \cdot x^2y^2\bfc \approx yh \cdot y^2x^2\bfc, \quad \sigma_2: yh \cdot x^2y^2\bfc \approx yh \cdot yxyx\bfc, \quad \sigma_3: yh \cdot y^2x^2\bfc \approx yh \cdot yxyx\bfc. \]
Since $\bfH \models \{ \eqref{id: xyxx=xyx}, \eqref{id: basis H}, \eqref{id: H xyyx=xyxy} \}$ by Lemma~\ref{L: H other id} and that the deductions \[ yh \cdot x^2y^2\bfc \stackrel{\eqref{id: basis H}}{\approx} yh \cdot yx^2y^2\bfc \stackrel{\eqref{id: xyxx=xyx}}{\approx} yh \cdot yx^2y\bfc \stackrel{\eqref{id: H xyyx=xyxy}}{\approx} yh \cdot yxyx\bfc \] hold, it follows that $\bfH \models \sigma_2$ and $\bfH\{\sigma_1\} = \bfH\{\sigma_3\}$.
Hence, $\sigma$ cannot be~$\sigma_2$ due to~(b), and it suffices to assume that~$\sigma$ is~$\sigma_3$.
In fact, $\sigma$ can be assumed to be the simpler identity $\sigma_3':y^2x^2\bfc \approx yxyx\bfc$, obtained by removing the prefix $\bfh = yh$ from both sides of~$\sigma_3$.
Indeed, the inclusion $\bfH\{\sigma_3'\} \subseteq \bfH\{\sigma_3\}$ clearly holds, while the reverse inclusion also holds because \[ \bfH\{\sigma_3\} \models y^2x^2\bfc \stackrel{\text{\ref{id: aperiodic}}_2}{\approx} yy \cdot y^2x^2\bfc \stackrel{\sigma_3}{\approx} yy \cdot yxyx\bfc \stackrel{\eqref{id: xxyx=xyx}}{\approx} yxyx\bfc \vdash \sigma_3', \] so that $\bfH\{\sigma\} = \bfH\{\sigma_3\} = \bfH\{ \sigma_3'\}$.
Consequently, one may assume that $\bfh=1$.

The identity~$\sigma$ is thus $\bfp\bfc \approx \bfq\bfc$.
Then it follows from~(b) and~(c) that $\bfH\{\sigma\}$ can be any of the following varieties: \[ \bfH\{ x^2y^2\bfc \approx y^2x^2\bfc\}, \quad \bfH\{ x^2y^2\bfc \approx yxyx\bfc\}, \quad \bfH\{ y^2x^2\bfc \approx yxyx\bfc\}; \] but since $\bfH \models xyxy \approx yxyx$, the second and third varieties coincide.
Therefore, it remains to assume that
\begin{enumerate}[\ (a)]
\item[(d)] $\sigma$ is either $x^2y^2\bfc \approx y^2x^2\bfc$ or $y^2x^2\bfc \approx yxyx\bfc$.
\end{enumerate}
It is routinely checked that if $\bfc=1$, then $\monAzero \not\models \sigma$; and if $\bfc = txy$, then the deduction $\eqref{id: H xyhxy=yxhxy} \vdash \sigma$ holds, so that $\bfH \models \sigma$ by Lemma~\ref{L: H other id}.
But these contradict~(a) and~(b).
Hence $\bfc \notin \{1,txy\}$ and so $\bfc \in \big\{ \prod_{i=1}^m (t_i\bfa_i), \,\prod_{i=2}^{m+1} (t_i\bfa_i) \,\big|\, m \geq 1 \big\}$.
Equivalently,
\begin{enumerate}[\ (a)]
\item[(e)] either $\bfc = \prod_{i=1}^m (t_i\bfa_i)$ for some $m \geq 1$ or $\bfc = \prod_{i=2}^m (t_i\bfa_i)$ for some $m \geq 2$.
\end{enumerate}
There are three cases to consider.

\medskip

\noindent\textsc{Case~1}: $\sigma$ is $x^2y^2\bfc \approx y^2x^2\bfc$.
If $y \notin \content(\bfc)$, then $\bfc = t_1x$ and it is straightforwardly checked that $\monAzero \not\models \sigma$, contradicting~(a).
A similar contradiction is obtained if $x \notin \content(\bfc)$.
Therefore, $x,y \in \content(\bfc)$, so that~$\bfc$ is either $\prod_{i=1}^m (t_i\bfa_i)$ or $\prod_{i=2}^{m+1} (t_i\bfa_i)$ for some $m \geq 2$.
Then the equivalence $\{ \eqref{id: xxyx=xyx},\, x^2y^2 \bfc \approx y^2x^2 \bfc\} \sim \{ \eqref{id: xxyx=xyx},\, xy \bfc \approx yx \bfc\}$ holds, so that \[ \bfH\{\sigma\} = \bfH\{x^2y^2\bfc \approx y^2x^2\bfc\} = \bfH\{xy\bfc \approx yx\bfc\} = \bfH\{ \text{\ref{id: Hsub}}_m \} = \bfH_m. \]

\noindent\textsc{Case~2}: $\sigma$ is $y^2x^2 \prod_{i=2}^m (t_i\bfa_i) \approx yxyx \prod_{i=2}^m (t_i\bfa_i)$ for some $m \geq 2$.
Then the inclusion $\bfH\{\sigma\} \subseteq \bfH\{\text{\ref{id: Hsub}}_m\}$ holds because 
\begin{align*} 
\bfH\{\sigma\} \models xy \prod_{i=1}^m (t_i\bfa_i) & \stackrel{\eqref{id: xxyx=xyx}}{\approx} x^2y^2 \prod_{i=1}^m (t_i\bfa_i) \stackrel{\sigma}{\approx} xyxy \prod_{i=1}^m (t_i\bfa_i) 
\stackrel{\text{\ref{id: eventually}}_2}{\approx} yxyx \prod_{i=1}^m (t_i\bfa_i) \\ & \stackrel{\sigma}{\approx} y^2x^2 \prod_{i=1}^m (t_i\bfa_i) \stackrel{\eqref{id: xxyx=xyx}}{\approx} yx \prod_{i=1}^m (t_i\bfa_i) \vdash \text{\ref{id: Hsub}}_m, \end{align*} 
while the reverse inclusion $\bfH\{\sigma\} \supseteq \bfH\{\text{\ref{id: Hsub}}_m\}$ holds because
\[ \bfH\{\text{\ref{id: Hsub}}_m\} \models \text{\ref{id: Hsub}}_m \vdash xyx\prod_{i=2}^m (t_i\bfa_i) \approx yx^2\prod_{i=2}^m (t_i\bfa_i) \vdash \sigma. \]
Consequently, $\bfH\{\sigma\} = \bfH\{\text{\ref{id: Hsub}}_m\} = \bfH_m$.

\medskip

\noindent\textsc{Case~3}: $\sigma$ is $y^2x^2 \prod_{i=1}^m (t_i\bfa_i) \approx yxyx \prod_{i=1}^m (t_i\bfa_i)$ for some $m \geq 1$.
If $m = 1$, then $\sigma$ is $y^2x^2 t_1 x \approx yxyx t_1 x$ and it is easily checked that $\monAzero \not\models \sigma$, contradicting~(a).
Hence $m \geq 2$, so that $\prod_{i=1}^m (t_i\bfa_i) = t_1xt_2y \cdots$.
Consider the identity $\sigma' : y^2x^2 \prod_{i=2}^m (t_i\bfa_i) \approx yxyx \prod_{i=2}^m (t_i\bfa_i)$ obtained by removing the factor $t_1\bfa_1$ from both sides of~$\sigma$.
The inclusion $\bfH\{\sigma\} \subseteq \bfH\{\sigma'\}$ holds because \[ \bfH\{\sigma\} \models y^2x^2 \prod_{i=2}^m (t_i\bfa_i) \stackrel{\text{\ref{id: aperiodic}}_2}{\approx} y^2x^2 \cdot x\prod_{i=2}^m (t_i\bfa_i) \stackrel{\sigma}{\approx} yxyx \cdot x\prod_{i=2}^m (t_i\bfa_i) \stackrel{\text{\ref{id: aperiodic}}_2}{\approx} yxyx \prod_{i=2}^m (t_i\bfa_i) \vdash \sigma', \] while the reverse inclusion $\bfH\{\sigma\} \supseteq \bfH\{\sigma'\}$ holds because~$\sigma$ is obtained by making the substitution $t_2 \mapsto t_1xt_2$ in~$\sigma'$.
It follows that $\bfH\{\sigma\} = \bfH\{\sigma'\} = \bfH_m$ by Case~2.

\medskip

Since the three cases cover all scenarios from~(d) and~(e), the proof is complete.
\end{proof}

\begin{proposition} \label{P: H}
\begin{enumerate}[\rm(i)] 
\item The variety $\bfH$ is a non-finitely generated almost Cross variety.
\item The lattice $\frakL(\bfH)$ is given in Figure~\ref{F: H}.
\end{enumerate}
\end{proposition}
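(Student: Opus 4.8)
The plan is to read off Proposition~\ref{P: H} from the structural facts already assembled, the only genuinely new ingredient being the strict growth of the chain $\bfH_2\subset\bfH_3\subset\cdots$. For part~(ii), Corollary~\ref{C: exclusion A0 Q}(iii) gives $\frakL(\bfH)=\frakL(\bfAzero\vee\bfQ)\cup[\bfAzero\vee\bfQ,\bfH]$; the ten-element lattice $\frakL(\bfAzero\vee\bfQ)$ is described in Proposition~\ref{P: A0 vee Q}(ii), while Lemma~\ref{L: [A01veeQ,H]} describes the interval $[\bfAzero\vee\bfQ,\bfH]$ as the ascending chain $\bfH_2\subseteq\bfH_3\subseteq\cdots$, with $\bfH_2=\bfAzero\vee\bfQ$ by Lemma~\ref{L: H2} and $\bfH$ at the top. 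So $\frakL(\bfH)$ is obtained by gluing an ascending $\omega$-chain (topped by $\bfH$) onto the maximum $\bfAzero\vee\bfQ$ of the ten-element lattice, which is exactly the picture in Figure~\ref{F: H}.

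For part~(i), note that $\bfH=\bfJ_2\{\eqref{id: xxyx=xyx},\eqref{id: xyxx=xyx},\eqref{id: basis H}\}$ is finitely based by definition and locally finite by Lemma~\ref{L: LF FG max}(i); hence it suffices to show that $\bfH$ is not finitely generated while every proper subvariety of $\bfH$ is Cross. The proper subvarieties split into two families. The ten members of $\frakL(\bfAzero\vee\bfQ)$ --- namely $\bftrivial$, $\bfRq\{1\}$, $\bfRq\{x\}$, $\bfRq\{xy\}$, $\bfEdual$, $\bfE$, $\bfBzero$, $\bfAzero$, $\bfQ$, $\bfAzero\vee\bfQ$ --- are all Cross by Lemma~\ref{L: bases A B E Q}, Proposition~\ref{P: A0 vee Q}, and (for $\bfEdual$) duality, the three Rees quotient varieties being routinely Cross. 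For each $m\geq2$ the variety $\bfH_m$ is finitely based (it is $\bfH$ with the single extra identity $\text{\ref{id: Hsub}}_m$), locally finite (being a subvariety of $\bfH$), and small, since $\frakL(\bfH_m)=\frakL(\bfAzero\vee\bfQ)\cup\{\bfH_2,\ldots,\bfH_m\}$ is finite; hence $\bfH_m$ is Cross by Lemma~\ref{L: LF FG max}(ii).

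It remains to see that $\bfH$ itself is not finitely generated. Once the chain $\bfH_2\subsetneq\bfH_3\subsetneq\cdots\subsetneq\bfH$ is known to be strictly increasing with no term equal to $\bfH$, every proper subvariety of $\bfH$ is contained in some $\bfH_m$ and hence in $\bfH_{m+1}\subsetneq\bfH$; consequently $\bfH$ has no maximal subvariety at all. Since every finitely generated variety has at least one maximal subvariety by Lemma~\ref{L: LF FG max}(iii), the variety $\bfH$ is not finitely generated, hence not Cross; being minimal such, it is almost Cross. This finishes the proof modulo the strictness of the chain.

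That strictness is the one real obstacle: one must show, for each $m\geq2$, that the restrictive identity $\text{\ref{id: Hsub}}_m$ fails in $\bfH_{m+1}$ (equivalently, in $\bfH$), so that $\bfH_m\subsetneq\bfH_{m+1}$ and $\bfH_m\neq\bfH$. I would establish this by exhibiting, for each $m$, a finite {\Jtrivial} monoid $M_m\in\bfH$ on which the two sides of $\text{\ref{id: Hsub}}_m$ take distinct values; the natural candidate is built from a word of the shape $xy\prod_{i=1}^m(t_i\bfa_i)$ with $(\bfa_i)=(x,y,x,y,\ldots)$, since in the corresponding Rees quotient already a substitution of variables for variables separates the two sides of $\text{\ref{id: Hsub}}_m$. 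The delicate point is arranging that $M_m$ actually lies in $\bfH$, i.e.\ satisfies \eqref{id: basis H} together with \eqref{id: xxyx=xyx} and \eqref{id: xyxx=xyx}: the raw Rees quotient of $xy\prod_{i=1}^m(t_i\bfa_i)$ fails \eqref{id: xxyx=xyx}, so the witness must be chosen with care (for instance by passing to a suitable quotient that forces the relations of $\bfH$ on the relevant factors). Everything else is bookkeeping with the cited results.
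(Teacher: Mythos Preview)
Your overall plan matches the paper's exactly: decompose $\frakL(\bfH)$ via Corollary~\ref{C: exclusion A0 Q}(iii), Proposition~\ref{P: A0 vee Q}(ii), and Lemma~\ref{L: [A01veeQ,H]}; observe that each proper subvariety is finitely based, small, and locally finite, hence Cross by Lemma~\ref{L: LF FG max}(ii); and conclude non-finite generation from the absence of maximal subvarieties via Lemma~\ref{L: LF FG max}(iii).

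Where you diverge is in the strictness step, which you leave open and propose to handle semantically by constructing witness monoids $M_m\in\bfH$---a route you yourself flag as delicate because the obvious Rees quotient fails $\eqref{id: xxyx=xyx}$. The paper sidesteps this completely with a one-line syntactic check: for any $k>m$, the defining identities $\{\text{\ref{id: aperiodic}}_2,\text{\ref{id: eventually}}_2,\eqref{id: xxyx=xyx},\eqref{id: xyxx=xyx},\eqref{id: basis H},\text{\ref{id: Hsub}}_k\}$ of $\bfH_k$ can transform the word $xy\prod_{i=1}^m(t_i\bfa_i)$ only into words of the shape $x^py^q\prod_{i=1}^m(t_i\bfa_i^{r_i})$ with $p,q,r_1,\ldots,r_m\geq1$, none of which equals $yx\prod_{i=1}^m(t_i\bfa_i)$. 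Hence $\bfH_k\not\models\text{\ref{id: Hsub}}_m$ for every $k>m$, giving strictness with no monoid construction at all. Note also that your parenthetical ``(equivalently, in $\bfH$)'' is not correct: a monoid in $\bfH$ falsifying $\text{\ref{id: Hsub}}_m$ only yields $\bfH_m\neq\bfH$, not $\bfH_m\subsetneq\bfH_{m+1}$; for the latter you need the witness to lie in $\bfH_{m+1}$, which the syntactic argument delivers automatically by working in $\bfH_k$ for arbitrary $k>m$.
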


\begin{proof}
As observed earlier, the lattice $\frakL(\bfH)$ coincides with $\frakL(\bfAzero \vee \bfQ) \cup [\bfAzero \vee \bfQ,\,\bfH]$.
It is easily seen that the deductions $\text{\ref{id: Hsub}}_2 \vdash \text{\ref{id: Hsub}}_3 \vdash \text{\ref{id: Hsub}}_4 \vdash \cdots$ hold, so it follows from Lemmas~\ref{L: H2} and~\ref{L: [A01veeQ,H]} that the interval $[\bfAzero \vee \bfQ,\,\bfH]$ coincides with the chain \begin{equation} \bfAzero \vee \bfQ = \bfH_2 \subseteq \bfHsub \subseteq \cdots \subseteq \bfH. \label{D: H} \end{equation}
It is easily checked that for any $k > m$, the identities $\{ \text{\ref{id: aperiodic}}_2,\, \text{\ref{id: eventually}}_2,\, \eqref{id: xxyx=xyx},\, \eqref{id: xyxx=xyx},\, \eqref{id: basis H},\, \text{\ref{id: Hsub}}_k \}$ defining~$\bfH_k$ can only convert $xy \prod_{i=1}^m (t_i\bfa_i)$ into a word of the form $x^py^q \prod_{i=1}^m (t_i\bfa_i^{r_i})$, where $p,q,r_1,r_2,\ldots,r_m \geq 1$.
Hence $\bfH_k \not\models \text{\ref{id: Hsub}}_m$, so that $\bfH_k \neq \bfH_m$.
It follows that the inclusions in~\eqref{D: H} are all proper.
By Proposition~\ref{P: A0 vee Q}(ii), the lattice $\frakL(\bfH)$ is as given in Figure~\ref{F: H}.

The variety~$\bfH$ satisfies the identities $\{ \eqref{id: xxyx=xyx}, \eqref{id: xyxx=xyx} \}$ and so is locally finite by Lemma~\ref{L: LF FG max}(i).
It is clear that every proper subvariety of~$\bfH$ is finitely based and small, and so also finitely generated by Lemma~\ref{L: LF FG max}(ii).
Consequently, $\bfH$ is almost Cross.
Since~$\bfH$ has no maximal subvarieties, it is non-finitely generated by Lemma~\ref{L: LF FG max}(iii).
\end{proof}

\begin{figure}[ht]
\begin{center}
\begin{tikzpicture}[scale=0.4]
\draw [fill] (03,21) circle [radius=\dt];
\draw [fill] (03,18) circle [radius=\dt];
\draw [fill] (03,16) circle [radius=\dt]; 
\draw [fill] (03,14) circle [radius=\dt];
\draw [fill] (00,12) circle [radius=\dt]; \draw [fill] (06,12) circle [radius=\dt]; 
\draw [fill] (03,10) circle [radius=\dt];
\draw [fill] (00,08) circle [radius=\dt]; \draw [fill] (06,08) circle [radius=\dt];
\draw [fill] (03,06) circle [radius=\dt];
\draw [fill] (03,04) circle [radius=\dt];
\draw [fill] (03,02) circle [radius=\dt];
\draw [fill] (03,00) circle [radius=\dt];
\draw (03,00) -- (03,06) -- (00,08) -- (06,12) -- (03,14) -- (03,19) (03,06) -- (06,08) -- (00,12) -- (03,14);
\node [right] at (03.2,21.1) {$\bfH$};
\node at (03,20.2) {$\vdots$};
\node [right] at (03.2,18) {$\bfH_4$};
\node [right] at (03.2,16) {$\bfHsub$};
\node [right] at (03.2,14.1) {$\bfH_2$};
\node [left] at (-0.2,12) {$\bfAzero$}; \node [right] at (06.2,12) {$\bfQ$};
\node [right] at (03.3,9.95) {$\bfBzero$};
\node [left] at (-0.2,08) {$\bfE$}; \node [right] at (06.1,08.15) {$\bfEdual$};
\node [right] at (03.2,05.8) {$\bfRq\{xy\}$};
\node [right] at (03.2,03.95) {$\bfRq\{x\}$};
\node [right] at (03.2,01.95) {$\bfRq\{1\}$};
\node [right] at (03.2,00) {$\bftrivial$};
\end{tikzpicture}
\end{center} \caption{The lattice $\frakL(\bfH)$}
\label{F: H}
\end{figure}

\begin{remark} \label{R: Hm}
\begin{enumerate}[(i)]
\item The variety $\bfH_2 = \bfAzero \vee \bfQ$ is self-dual but the varieties $\bfHsub, \bfH_4, \ldots, \bfH$ are not. 
For instance, the following monoid belongs to $\bfHsub \subset \bfH_4 \subset \cdots \subset \bfH$ but not to~$\overleftarrow{\bfH}$:
\begin{align*}
\monHsub & = \langle a,e,f,1 \,|\, ae=fa=a,\, af=fe=0,\, e^2=e,\, f^2=f \rangle \\ & = \{ 0,\, a,\, e,\, f,\, ea,\, ef,\, 1\}.
\end{align*}
\item Since $\monHsub \in \bfHsub$ and $\monHsub \notin \bfH_2$, the variety~$\bfHsub$ is generated by~$\monHsub$. 
\end{enumerate}
\end{remark}

\subsection{The varieties \texorpdfstring{$\bfL$}{L}, \texorpdfstring{$\bfP$}{P}, and \texorpdfstring{$\bfPdual$}{P-}} \label{subsec: var L P}

Let~$\bfL$ denote the variety defined by the identities \[ \text{\ref{id: aperiodic}}_2, \quad xyhxty \approx yxhxty, \quad xhxyty \approx xhyxty, \quad xhytxy \approx xhytyx \] and let~$\bfP$ denote the variety defined by the identities \[ \eqref{id: xyxx=xyx}, \quad x^2y^2 \approx y^2x^2, \quad xyhxy \approx yxhxy, \quad xyxhy \approx x^2yhy, \quad xyxhy \approx yx^2hy. \]
It is easily seen that~$\bfL$ is self-dual, while~$\bfP$ is not self-dual because $\monFsub \in \bfP \backslash \bfPdual$.

\begin{proposition}[{Lee \cite[Proposition~4.1]{Lee14zns}}] \label{P: L}
\begin{enumerate}[\rm(i)] 
\item The variety $\bfL$ is a non-finitely generated almost Cross variety.
\item The lattice $\frakL(\bfL)$ coincides with the chain \[ \bftrivial \subset \bfRq\{1\} \subset \bfRq\{x\} \subset \bfRq\{xy\} \subset \bfRq\{\bfx_1\} \subset \bfRq\{\bfx_2\} \subset \bfRq\{\bfx_3\} \subset \cdots \subset \bfL, \] where $\bfx_m = x \prod_{i=1}^m (h_ix)$.
\end{enumerate}
\end{proposition}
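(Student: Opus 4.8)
The plan is to deduce part~(i) from part~(ii). Granting the description of $\frakL(\bfL)$, the variety $\bfL$ is finitely based (it has a four-identity basis) but not small, hence not Cross; and every proper subvariety of $\bfL$ --- one of $\bftrivial$, $\bfRq\{1\}$, $\bfRq\{x\}$, $\bfRq\{xy\}$, $\bfRq\{\bfx_m\}$ --- is generated by a finite Rees quotient of a free monoid, hence finitely generated and locally finite; sits at a finite level of the chain, hence is small; and is finitely based (explicit finite bases are available, e.g.\ $\bfRq\{1\}=\bfJ_1$, $\bfRq\{x\}=\var\{x^3\approx x^2,\,xy\approx yx\}$, $\bfRq\{xy\}=\var\{xyx\approx x^2y,\,xyx\approx yx^2\}$ from Section~\ref{sec: prelim}); so every proper subvariety is Cross, and $\bfL$ is almost Cross. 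Finally $\bfL$ is nontrivial and, by part~(ii), has no maximal subvariety, so it is non-finitely generated by Lemma~\ref{L: LF FG max}(iii). Hence everything is in part~(ii).

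For ``chain $\subseteq\frakL(\bfL)$'' I would first check that $1$, $x$, $xy$, and every $\bfx_m=x\prod_{i=1}^m(h_ix)$ is an isoterm for $\bfL$, so that $\bfRq\{1\}$, $\bfRq\{x\}$, $\bfRq\{xy\}$, and all $\bfRq\{\bfx_m\}$ are subvarieties of $\bfL$ by Lemma~\ref{L: isoterm}. This is a direct inspection of the four defining identities: the left sides $xyhxty$, $xhxyty$, $xhytxy$ each display two occurrences of a non-simple variable separated by a prescribed pattern involving another non-simple variable, and $x^3$ displays three consecutive equal letters; none of these configurations occurs in $\bfx_m$ --- in which the only repeated variable $x$ has exactly one simple variable between consecutive occurrences --- nor in $1,x,xy$. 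Strictness of the chain is another routine isoterm check: $\bfJ_1\models x^2\approx x$ while $x$ is an isoterm for $\bfRq\{x\}$; $\bfRq\{x\}$ is commutative while $xy$ is an isoterm for $\bfRq\{xy\}$; $\bfRq\{xy\}\models xhx\approx x^2h$ while $xhx=\bfx_1$ is an isoterm for $\bfRq\{\bfx_1\}$; and a length count shows $\bfx_{m+1}$ is not an isoterm for $\monRq\{\bfx_m\}$, whose longest factor is shorter than $\bfx_{m+1}$.

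The substance is the reverse inclusion, that every $\bfV\subsetneq\bfL$ lies on the chain. The first ingredient is $\bfL=\bigvee_{m\ge1}\bfRq\{\bfx_m\}$: I would introduce a canonical form for words modulo $\bfL$ (reducible using $x^3\approx x^2$ and the three ``swap'' identities), show each word has a unique canonical form, and verify that distinct canonical words are separated by a substitution into $\monRq\{\bfx_m\}$ for all large $m$; an identity holding in every $\monRq\{\bfx_m\}$ then holds in $\bfL$. Granting this, let $\bfV\subsetneq\bfL$. If $xy$ is not an isoterm for $\bfV$, then by Lemma~\ref{L: comm CR}(ii),(iii) the variety $\bfV$ is commutative, so $\bfV=\var\{x^{n+1}\approx x^n,\,xy\approx yx\}$ for some $n$; as $\bfV\models x^3\approx x^2$ forces $n\le 2$, we get $\bfV\in\{\bftrivial,\bfRq\{1\},\bfRq\{x\}\}$. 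Otherwise $xy$ is an isoterm for $\bfV$, and since $\bfV\neq\bigvee\bfRq\{\bfx_m\}$ there is a least $m\ge1$ with $\bfx_m$ not an isoterm for $\bfV$; then $\bfx_{m-1}$ (read as $xy$ when $m=1$) is an isoterm for $\bfV$, giving $\bfRq\{\bfx_{m-1}\}\subseteq\bfV$. It remains to prove the matching bound $\bfV\subseteq\bfRq\{\bfx_{m-1}\}$: the variety $\bfV$ satisfies a nontrivial identity $\bfx_m\approx\bfw$, and reducing $\bfw$ modulo the identities of $\bfL$ one shows $\bfV$ satisfies every identity of a finite basis of $\bfRq\{\bfx_{m-1}\}$, so $\bfV=\bfRq\{\bfx_{m-1}\}$.

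I expect the last reduction, together with the canonical-form analysis behind $\bfL=\bigvee\bfRq\{\bfx_m\}$, to be the main obstacle: one must classify the words $\bfw$ that can be $\bfL$-equivalent to a deformation of $\bfx_m$ --- equivalently, describe the $\bfL$-normal forms of words in at most $m+1$ variables --- and read off from each admissible $\bfx_m\approx\bfw$ the defining identities of $\bfRq\{\bfx_{m-1}\}$. Everything else is isoterm bookkeeping and short explicit computations. This argument is in essence that of Lee~\cite[Proposition~4.1]{Lee14zns}, to which the reader may be referred for the full details.
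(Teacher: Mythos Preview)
The paper does not prove this proposition; it is imported wholesale from Lee~\cite[Proposition~4.1]{Lee14zns}, with no argument given beyond the citation. Your proposal supplies a plausible outline of how the result is established and ends by deferring to the same reference, so at the level of what the paper actually does, you are in agreement: both the paper and you treat this as a quoted result.

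That said, since you chose to sketch an argument, a couple of points deserve flagging. First, in deducing part~(i) from part~(ii) you assert that each $\bfRq\{\bfx_m\}$ is finitely based; this is true but is itself a nontrivial fact (finite generation does not imply finite basability for monoids), and it is precisely the content of the companion results in Lee~\cite{Lee14zns} and the Jackson basis for $\bfRq\{xhx\}$ quoted in Section~\ref{sec: prelim}. You should make clear you are invoking that, not deriving it. Second, the step ``$\bfV$ satisfies a nontrivial identity $\bfx_m \approx \bfw$, hence $\bfV \subseteq \bfRq\{\bfx_{m-1}\}$'' is where the real work lies: one must analyse all words $\bfw$ to which $\bfx_m$ can be $\bfL$-equivalent and show that each resulting identity already implies the basis of $\bfRq\{\bfx_{m-1}\}$ within~$\bfL$. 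You identify this as the main obstacle, which is accurate, but the sketch you give does not indicate how the case analysis is organised; Lee's proof handles this via an explicit description of the identities of each $\bfRq\{\bfx_m\}$. Since the present paper does not reproduce any of this, simply citing~\cite{Lee14zns} as you do at the end is exactly what the paper does and is entirely adequate here.
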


\begin{proposition}[{Gusev \cite[Proposition~3.1]{Gus25ijac}}] \label{P: P}
\begin{enumerate}[\rm(i)] 
\item The variety $\bfP$ is a non-finitely generated almost Cross variety.
\item The lattice $\frakL(\bfP)$ is given in Figure~\ref{F: P}\textup; the varieties in this lattice that have not been introduced are 
\begin{align*} 
\bfP_m & = \bfP\{\text{\ref{id: Hsub}}_m\}, \,\ m \geq 2, \\
\bfPsub & = \bfP\{xh_1xh_2xh_3x \approx xh_1xh_2h_3x\}, \\ 
\text{and } \ \bfFsub \vee \bfEdual & = \bfP_2 \cap \bfPsub = \bfP\{ \text{\ref{id: Hsub}}_2,\, xh_1xh_2xh_3x \approx xh_1xh_2h_3x\}.
\end{align*}
\end{enumerate}
\end{proposition}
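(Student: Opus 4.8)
The plan is to establish part~(ii) first and then read off part~(i) with Lemma~\ref{L: LF FG max}. Two preliminary observations: since $\bfP \models \eqref{id: xyxx=xyx}$, the variety~$\bfP$ and all of its subvarieties are locally finite by Lemma~\ref{L: LF FG max}(i); and $\bfP \subseteq \bfJ_2$, because the identities $\text{\ref{id: aperiodic}}_2$ and $\text{\ref{id: eventually}}_2$ both follow from the defining identities of~$\bfP$ (for instance $xyxy \approx x^2y^2 \approx y^2x^2 \approx yxyx$, using $xyxhy \approx x^2yhy \approx yx^2hy$ with $h$ specialised to~$1$ and $x^2y^2 \approx y^2x^2$).

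For part~(ii) the core is a normal-form analysis of words modulo the identities of~$\bfP$. Writing an arbitrary word in natural form $\bfw = \bfw_0\prod_{i=1}^m(h_i\bfw_i)$, with the $h_i$ its simple variables and the $\bfw_i$ built from non-simple ones, I would use $\eqref{id: xyxx=xyx}$, $x^2y^2 \approx y^2x^2$, $xyhxy \approx yxhxy$, and $xyxhy \approx x^2yhy \approx yx^2hy$ to bound the number of occurrences of each non-simple variable inside each block~$\bfw_i$ and to force those occurrences into a restricted canonical shape (a commuting ``square part'' followed by a short tail). The goal is to show that, modulo the identities of~$\bfP$, every nontrivial identity satisfied by a subvariety of~$\bfP$ is equivalent to a combination of the restrictive identities $\text{\ref{id: Hsub}}_m$ ($m \geq 2$) and the identity $xh_1xh_2xh_3x \approx xh_1xh_2h_3x$, possibly after deleting simple variables and specialising. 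This is, for~$\bfP$, the analogue of Lemma~\ref{L: O sub} and Proposition~\ref{P: subvarieties of O}, and I expect to lean on the standard normal-form machinery for aperiodic monoids with commuting idempotents.

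From this classification, the subvarieties of~$\bfP$ not contained in $\bfFsub \vee \bfEdual$ turn out to be exactly $\bfP$, the chain $\bfP_2 \subseteq \bfP_3 \subseteq \cdots$, the chain $\bfP_2 \vee \bfPsub \subseteq \bfP_3 \vee \bfPsub \subseteq \cdots$, and $\bfPsub$ itself, with $\bfP_2 \cap \bfPsub = \bfFsub \vee \bfEdual$; the deductions $\text{\ref{id: Hsub}}_2 \vdash \text{\ref{id: Hsub}}_3 \vdash \cdots$ give the two chains, and each inclusion is proper by an argument like the one for~$\bfH$ in Proposition~\ref{P: H} (exhibit a word that the defining identities of~$\bfP_k$ cannot transform into the required form). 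The part of $\frakL(\bfP)$ lying inside $\bfFsub \vee \bfEdual$ is identified via Lemma~\ref{L: isoterm} and the already-known lattices of $\bfFsub$, $\bfEdual$, $\bfE$, and the small Rees-quotient varieties (Propositions~\ref{P: A0 vee Q} and~\ref{P: F}). Assembling these pieces yields Figure~\ref{F: P}.

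Part~(i) then follows. The variety~$\bfP$ is finitely based by construction and locally finite, hence Cross if and only if finitely generated; but Figure~\ref{F: P} shows that~$\bfP$ has no maximal proper subvariety (both chains ascend to~$\bfP$), so $\bfP$ is not finitely generated by Lemma~\ref{L: LF FG max}(iii), whence not Cross. Conversely, any proper subvariety $\bfV \subsetneq \bfP$ satisfies $\bfV \subseteq \bfP_m \vee \bfPsub$ for some $m \geq 2$; the varieties $\bfP_m$, $\bfPsub$, and $\bfP_m \vee \bfPsub$ are finitely based (explicit bases are produced in part~(ii)) and small (their subvariety lattices are finite segments of Figure~\ref{F: P}), hence finitely generated by Lemma~\ref{L: LF FG max}(ii), and so are Cross; then~$\bfV$, being a subvariety of the small locally finite variety $\bfP_m \vee \bfPsub$, is small and locally finite---hence finitely generated by Lemma~\ref{L: LF FG max}(ii)---and finitely based, being one of the finitely many varieties exhibited in part~(ii). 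Thus every proper subvariety of~$\bfP$ is Cross and~$\bfP$ is not, so~$\bfP$ is a non-finitely generated almost Cross variety. The main obstacle is the normal-form analysis in part~(ii): controlling the interaction of the four ``mixing'' identities $x^2y^2 \approx y^2x^2$, $xyhxy \approx yxhxy$, $xyxhy \approx x^2yhy$, $xyxhy \approx yx^2hy$ with the absorption identity $\eqref{id: xyxx=xyx}$ well enough to see that subvariety identities reduce to the stated short family, and then checking that the resulting varieties are pairwise distinct and positioned in $\frakL(\bfP)$ exactly as drawn---a task I expect to be more intricate than the analogous treatments of~$\bfF$ and~$\bfH$ in the paper.
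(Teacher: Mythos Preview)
The paper does not prove Proposition~\ref{P: P}; it is quoted from Gusev~\cite[Proposition~3.1]{Gus25ijac} and carries no proof here, so there is no argument in the present paper to compare your sketch against.

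That said, your sketch misreads the target lattice. You describe two infinite chains above $\bfFsub \vee \bfEdual$, namely $\bfP_2 \subseteq \bfP_3 \subseteq \cdots$ and $\bfP_2 \vee \bfPsub \subseteq \bfP_3 \vee \bfPsub \subseteq \cdots$, but Figure~\ref{F: P} has only one: both $\bfPsub$ and $\bfP_2$ are covered by $\bfP_3$, so $\bfP_m \vee \bfPsub = \bfP_{\max(m,3)}$ and your second chain collapses onto the first. You also omit $\bfQ$, $\bfBzero$, and $\bfEdual$ from your inventory of subvarieties not contained in $\bfFsub \vee \bfEdual$; in the figure these sit below $\bfP_2$ but are incomparable with $\bfFsub \vee \bfEdual$. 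These are errors in the description of what is to be proved, and any normal-form analysis aimed at the wrong lattice would have to be redone. Your derivation of part~(i) from part~(ii) via Lemma~\ref{L: LF FG max} is sound in outline (finitely based, locally finite, no maximal proper subvariety, every proper subvariety small), and a normal-form classification of identities modulo the defining laws of~$\bfP$ is indeed the natural route to part~(ii); but the outcome of that classification must match Figure~\ref{F: P}, not the two-chain picture you sketched.
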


\begin{figure}[ht]
\begin{center}
\begin{tikzpicture}[scale=0.4]
\draw [fill] (03,23) circle [radius=\dt];
\draw [fill] (03,20) circle [radius=\dt];
\draw [fill] (03,18) circle [radius=\dt];
\draw [fill] (03,16) circle [radius=\dt]; 
\draw [fill] (00,14) circle [radius=\dt]; \draw [fill] (06,14) circle [radius=\dt];
\draw [fill] (03,12) circle [radius=\dt]; \draw [fill] (09,12) circle [radius=\dt]; 
\draw [fill] (00,10) circle [radius=\dt]; \draw [fill] (06,10) circle [radius=\dt];
\draw [fill] (03,08) circle [radius=\dt]; \draw [fill] (09,08) circle [radius=\dt];
\draw [fill] (06,06) circle [radius=\dt];
\draw [fill] (06,04) circle [radius=\dt];
\draw [fill] (06,02) circle [radius=\dt];
\draw [fill] (06,00) circle [radius=\dt];
\draw 
(06,00) -- (06,06) -- (03,08) -- (09,12) -- (03,16) -- (03,21) 
(06,06) -- (09,08) -- (00,14) -- (03,16)
(03,08) -- (00,10) -- (06,14)
;
\node [right] at (03.2,23.1) {$\bfP$};
\node at (03,22.2) {$\vdots$};
\node [right] at (03.2,20) {$\bfP_5$};
\node [right] at (03.2,18) {$\bfP_4$};
\node [right] at (03.2,16.1) {$\bfP_3$};
\node [left] at (-0.1,14) {$\bfPsub$}; \node [right] at (06.2,14.1) {$\bfP_2$}; 
\node [left] at (2.6,12) {$\bfFsub \vee \bfEdual$}; \node [right] at (09.2,12) {$\bfQ$};
\node [left] at (-0.1,10) {$\bfFsub$}; \node [right] at (06.3,9.95) {$\bfBzero$};
\node [left] at (2.8,07.9) {$\bfE$}; \node [right] at (09.1,08.15) {$\bfEdual$};
\node [right] at (06.2,05.8) {$\bfRq\{xy\}$};
\node [right] at (06.2,03.95) {$\bfRq\{x\}$};
\node [right] at (06.2,01.95) {$\bfRq\{1\}$};
\node [right] at (06.2,00) {$\bftrivial$};
\end{tikzpicture}
\end{center} 
\caption{The lattice $\frakL(\bfP)$}
\label{F: P}
\end{figure}

\begin{lemma}[Gusev {\cite[Proposition~3.1 and Lemma~3.3]{Gus25ijac}}] \label{L: excl P}
Let $\bfV$ be any variety in the interval $[\bfFsub \vee \bfQ,\, \bfJ_2\{\eqref{id: xyxx=xyx}\}]$.
Suppose that $\bfP \nsubseteq \bfV$.
Then $\bfV \models \text{\ref{id: Hsub}}_m$ for some $m \geq 2$.
\end{lemma}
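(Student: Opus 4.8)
The plan is to pin down the bottom of the interval, reformulate the conclusion as an implication about identities of $\bfV$, and then do the real work syntactically. Since $\bfV\subseteq\bfJ_2\{\eqref{id: xyxx=xyx}\}$, the variety $\bfV$ is locally finite by Lemma~\ref{L: LF FG max}(i), satisfies $\{\text{\ref{id: aperiodic}}_2,\text{\ref{id: eventually}}_2,\eqref{id: xyxx=xyx}\}$, and contains $\bfFsub$ and $\bfQ$. I would first observe that $\bfFsub\vee\bfQ=\bfP_2$: by Proposition~\ref{P: A0 vee Q}(ii) one has $\bfEdual\subseteq\bfBzero\subseteq\bfQ$, so $\bfFsub\vee\bfEdual\subseteq\bfFsub\vee\bfQ\subseteq\bfP$, this first inclusion being proper since $\bfQ$ and $\bfFsub\vee\bfEdual$ are incomparable in $\frakL(\bfP)$; as Figure~\ref{F: P} shows that $\bfP_2$ covers $\bfFsub\vee\bfEdual$ while $\bfFsub,\bfQ\subseteq\bfP_2$, it follows that $\bfFsub\vee\bfQ=\bfP_2$. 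Hence the interval in the statement is $[\bfP_2,\bfJ_2\{\eqref{id: xyxx=xyx}\}]$.

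Next I would argue by contraposition: assume $\bfV$ satisfies no restrictive identity $\text{\ref{id: Hsub}}_m$, and show $\bfP\subseteq\bfV$. Because $\bfP\subseteq\bfJ_2\{\eqref{id: xyxx=xyx}\}$ too, $\bfP$ satisfies $\{\text{\ref{id: aperiodic}}_2,\text{\ref{id: eventually}}_2,\eqref{id: xyxx=xyx}\}$ as well, so it is enough to show that every identity of $\bfV$ that is \emph{not} a consequence of these three also holds in $\bfP$. Fix such an identity $\bfu\approx\bfv$ of $\bfV$, assume for contradiction that it fails in $\bfP$, and note that since $\bfFsub,\bfQ\subseteq\bfV$ it holds in both $\bfFsub$ and $\bfQ$. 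The aim is then to deduce that $\bfV$ satisfies some $\text{\ref{id: Hsub}}_m$, which will be the required contradiction.

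The last step is the crux and the main obstacle. Because $\bfu\approx\bfv$ holds in $\bfQ$, Lemma~\ref{L: id Q} forces $\bfu$ and $\bfv$ to have the same simple variables in the same order and the same content in each block; the three base identities satisfied by $\bfV$ then normalise the non-simple parts of the two words, and the inclusion $\bfE\subseteq\bfFsub\subseteq\bfV$ rules out the elementary collapse identities among short words. The point to establish — and this is where the real work lies — is that, after these reductions, an identity holding in $\bfV$ but failing in $\bfP$ and not following from the base identities must, modulo the base identities, transpose two non-simple variables across a block boundary, which is exactly the content of some $\text{\ref{id: Hsub}}_m$. This requires a complete classification of the relevant normal forms; the delicate entries are the forms that do not visibly perform such a transposition (for instance those that merely collapse a leading square, as in $x^2y\bfc\approx xy\bfc$), and for these one re-substitutes — replacing $x$ or $y$ by a product — and reuses $\text{\ref{id: eventually}}_2$ and $\eqref{id: xyxx=xyx}$ to manufacture a genuine transposition, after which a block-removal argument of the type used in the proof of Corollary~\ref{C: O cap J2} keeps the index $m$ within a fixed finite range. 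Obtaining this classification and controlling $m$ is the heart of the proof.
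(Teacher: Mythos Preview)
The paper does not give its own proof of this lemma: it merely attributes the result to Gusev \cite[Proposition~3.1 and Lemma~3.3]{Gus25ijac} and invokes it as a black box. So there is no argument in the present paper to compare yours against; what you have written is an attempted reconstruction of the cited proof.

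Your setup is sound. The identification $\bfFsub\vee\bfQ=\bfP_2$ is correct and can indeed be read off Figure~\ref{F: P} as you describe, the contrapositive is set up properly, and the reduction via Lemma~\ref{L: id Q} to identities with a fixed simple-variable skeleton is the natural first move.

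The gap is precisely the part you yourself flag as ``the heart of the proof''. You do not carry out the classification of normal forms, and the sketch you give is not detailed enough to see that it goes through. Two specific points. First, the appeal to ``a block-removal argument of the type used in the proof of Corollary~\ref{C: O cap J2}'' is suspect here: that argument is performed inside $\bfO\cap\bfJ_2$, where both identities in~\eqref{id: O} are available, whereas your $\bfV$ is only assumed to lie in $\bfJ_2\{\eqref{id: xyxx=xyx}\}$ and need not satisfy $xhxyxty\approx xhyxty$; without that collapsing identity the block-removal manoeuvre of Corollary~\ref{C: O cap J2} does not obviously transfer. Second, your stated goal of keeping ``the index $m$ within a fixed finite range'' is unnecessary: the lemma only asks that $\bfV\models\text{\ref{id: Hsub}}_m$ for \emph{some} $m\ge 2$, so any $m$ at all suffices, and trying to bound it suggests the reduction has not been fully thought through. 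As written, the proposal is a reasonable plan whose decisive syntactic step is asserted rather than executed.
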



\section{Finitely generated almost Cross varieties} \label{sec: almost fg}

This section presents almost Cross subvarieties of~$\bbJ$ that are finitely generated.
Results related to these varieties that are required later will also be established.

\subsection{The varieties \texorpdfstring{$\bfI$}{I} and \texorpdfstring{$\bfIdual$}{I-}} \label{subsec: var I}

Let $\Pi_m$ denote the set of permutations on the set $\{ 1,2,\ldots,m\}$ and let~$\bfI$ denote the variety defined by the identities
\begin{align}
\eqref{id: xyxx=xyx}, \quad x^2y^2 \approx y^2x^2, \quad & xyhxy \approx yxhxy, \quad xh_1xh_2xh_3x \approx xh_1xh_2h_3x, \label{id: I first4} \\
x \bigg(\prod_{i=1}^m y_{\pi(i)}\bigg)x \bigg(\prod_{i=1}^m (h_iy_i)\bigg) & \approx x^2 \bigg(\prod_{i=1}^m y_{\pi(i)}\bigg)\bigg(\prod_{i=1}^m (h_iy_i)\bigg), \quad m \geq 1, \, \pi \in \Pi_m. \label{id: I list}
\end{align}
It is routinely checked that $\monFsub \models \eqref{id: I first4}$ and $\monFsub \models xhxyty \approx xhyxty \vdash \eqref{id: I list}$, so that $\monFsub \in \bfI$.
But since $\monFsub$ does not satisfy the identity~\eqref{id: xxyx=xyx} of~$\bfIdual$, the variety~$\bfI$ is not self-dual.

\begin{proposition}[{Gusev~\cite{Gus20}}] \label{P: I}
\begin{enumerate}[\rm(i)] 
\item The variety $\bfI$ is a non-finitely based\textup, finitely generated almost Cross variety.
\item The lattice $\frakL(\bfI)$ is given in Figure~\ref{F: I}\textup; the variety in this lattice that has not been introduced is \[ \bfIsub = \bfI\{xhxyty \approx xhyxty\} = \var\{ \eqref{id: I first4},\, xhxyty \approx xhyxty \}. \]
\end{enumerate}
\end{proposition}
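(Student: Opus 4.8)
The plan is to derive the three properties asserted in part~(i) — finite generation, absence of a finite basis, and minimality among non-Cross varieties — from the description of the subvariety lattice in part~(ii), together with a single non-finite-basis argument. I would first record the elementary reductions. Since~\eqref{id: xyxx=xyx} is one of the identities in~\eqref{id: I first4}, the variety $\bfI$ is locally finite by Lemma~\ref{L: LF FG max}(i); substituting $h \mapsto 1$ into~\eqref{id: xyxx=xyx} and into $xyhxy \approx yxhxy$ gives $\bfI \models \text{\ref{id: aperiodic}}_2$ and $\bfI \models \text{\ref{id: eventually}}_2$, so $\bfI \subseteq \bfJ_2\{\eqref{id: xyxx=xyx}\}$, while $\monFsub \in \bfI$ (observed just before the proposition) together with $\monFsub \not\models \eqref{id: xxyx=xyx}$ shows $\bfI \nsubseteq \bfJ_2\{\eqref{id: xxyx=xyx}\}$; in particular $\bfI$ is noncommutative.

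The core of part~(ii) is to prove that $\bfIsub$ is the unique maximal subvariety of $\bfI$, so that $\frakL(\bfI) = \frakL(\bfIsub) \cup \{\bfI\}$ is the finite lattice of Figure~\ref{F: I}. Concretely, I would show that every proper subvariety $\bfV$ of $\bfI$ satisfies $xhxyty \approx xhyxty$. Since $\bfI$ is locally finite and $\bfV \subsetneq \bfI$, Lemma~\ref{L: isoterm} supplies a word that is an isoterm for $\bfI$ but not for $\bfV$, hence a nontrivial identity $\sigma$ with $\bfV \models \sigma$ and $\bfI \not\models \sigma$. Using $\text{\ref{id: aperiodic}}_2$, $\text{\ref{id: eventually}}_2$ and~\eqref{id: xyxx=xyx} one normalises the two sides of $\sigma$ and bounds their lengths and contents, reducing to finitely many critical identities. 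The cases in which $\bfV$ is completely regular or omits one of $\monAzero, \monBzero, \monEdual, \monFsub$ are handled by Lemmas~\ref{L: comm CR}, \ref{L: xyx not isoterm}, \ref{L: exclusion A0 Q B0} and~\ref{L: excl Fsub Edual} and Corollary~\ref{C: exclusion A0 Q}; in the remaining cases one checks by direct deduction that $\sigma$ forces $xhxyty \approx xhyxty$ modulo the identities of $\bfI$. (If $\bfI \subseteq \bfO$, which should follow from~\eqref{id: I first4} and~\eqref{id: I list}, an alternative is to invoke Proposition~\ref{P: subvarieties of O} so as to restrict $\sigma$ to the families~\eqref{id: Oc sub} and~\eqref{id: O J2 sub yhpc=yhqc}; either way the essential point is that the restrictive identities $\text{\ref{id: Hsub}}_m$ do not generate an infinite ascending chain of subvarieties of $\bfI$, so that $\frakL(\bfI)$ is finite.) Finally, $\frakL(\bfIsub)$ is assembled from the known bases of its members $\bftrivial, \bfRq\{1\}, \bfRq\{x\}, \bfRq\{xy\}, \bfE, \bfFsub$ and $\bfIsub = \var\{\eqref{id: I first4},\, xhxyty \approx xhyxty\}$.

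Part~(i) then follows quickly. Being locally finite and small, $\bfI$ is finitely generated by Lemma~\ref{L: LF FG max}(ii); alternatively one exhibits a finite monoid $\monM$ and verifies $\var\monM = \bfI$ by a normal-form computation. Every proper subvariety of $\bfI$ is likewise locally finite and small, hence finitely generated by the same lemma, and is finitely based, since the maximal one has the explicit basis above and the others are finitely based varieties classified earlier in the paper (among $\bfBzero, \bfE, \bfEdual, \bfFsub$, the $\bfRq\{\cdot\}$, and $\bftrivial$); thus every proper subvariety of $\bfI$ is Cross. It remains to show $\bfI$ is non-finitely based, which makes it non-Cross and therefore almost Cross. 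For this I would construct, for each $n \geq 1$, a finite monoid $\monM_n$ — a Rees quotient of $\scrA^*$ over a finite set of words read off from the left-hand sides of the identities in~\eqref{id: I list} — that satisfies every identity of $\bfI$ involving at most $n$ variables but fails the instance of~\eqref{id: I list} with $m = n$ (for a suitable permutation); since any finite set of identities uses boundedly many variables, no finite subset of the identities of $\bfI$ can axiomatise $\bfI$.

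I expect the subvariety classification to be the main obstacle: ruling out any hidden infinite family of subvarieties of $\bfI$ and verifying that the single identity $xhxyty \approx xhyxty$ suffices to cut out every proper subvariety. Once $\frakL(\bfI)$ is pinned down, finite generation and almost Crossness are essentially formal, and the normal-form identification of a generating monoid together with the construction of the monoids $\monM_n$ are laborious but follow the well-established pattern for finitely generated, non-finitely based varieties of monoids.
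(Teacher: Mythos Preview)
The paper does not prove this proposition at all: it is stated with the attribution ``Gusev~\cite{Gus20}'' and simply quoted as a known result. So there is nothing to compare your argument against except the original article~\cite{Gus20}, whose proof is substantially longer and more intricate than your sketch suggests.

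That said, your outline has concrete gaps even as a plan. First, your enumeration of~$\frakL(\bfIsub)$ is wrong: you list only $\bftrivial, \bfRq\{1\}, \bfRq\{x\}, \bfRq\{xy\}, \bfE, \bfFsub, \bfIsub$, but Figure~\ref{F: I} shows that $\bfEdual$, $\bfBzero$, $\bfFsub \vee \bfEdual$, and~$\bfPsub$ all lie strictly between $\bfRq\{xy\}$ and~$\bfIsub$; the lattice is not a chain below~$\bfIsub$, and $\bfPsub$ in particular has not appeared earlier in the paper, so ``assembled from known bases'' does not cover it. Second, your proposed shortcut via Proposition~\ref{P: subvarieties of O} fails because $\bfI \nsubseteq \bfO$: Lemma~\ref{L: excl I K} shows that excluding~$\bfI$ is precisely what forces the first identity in~\eqref{id: O}, so~$\bfI$ itself does not satisfy it. Third, the heart of the matter---that every nontrivial identity violated by~$\bfI$ already implies $xhxyty \approx xhyxty$ modulo~$\bfI$---is exactly where the work lies, and ``reducing to finitely many critical identities'' followed by ``direct deduction'' is not a proof; in~\cite{Gus20} this step requires a careful structural analysis of words and several pages of case distinctions. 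Your non-finite-basis sketch is closer to viable, but the construction of the witnesses~$\monM_n$ and the verification that they satisfy all $n$-variable consequences of~$\bfI$ is again the nontrivial content of~\cite{Gus20}, not a routine check.
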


\begin{remark}
O.B.\ Sapir~\cite{SapO21} exhibited a monoid of order~31 that generates the variety~$\bfI$.
\end{remark}

\begin{figure}[ht]
\begin{center}
\begin{tikzpicture}[scale=0.4]
\draw [fill] (03,18) circle [radius=\dt];
\draw [fill] (03,16) circle [radius=\dt];
\draw [fill] (03,14) circle [radius=\dt]; 
\draw [fill] (03,12) circle [radius=\dt]; 
\draw [fill] (00,10) circle [radius=\dt]; \draw [fill] (06,10) circle [radius=\dt];
\draw [fill] (03,08) circle [radius=\dt]; \draw [fill] (09,08) circle [radius=\dt];
\draw [fill] (06,06) circle [radius=\dt];
\draw [fill] (06,04) circle [radius=\dt];
\draw [fill] (06,02) circle [radius=\dt];
\draw [fill] (06,00) circle [radius=\dt];
\draw 
(06,00) -- (06,06) -- (09,08) -- (03,12) -- (03,18) 
(06,06) -- (03,08) -- (06,10) (03,08) -- (00,10) -- (03,12)
;
\node [right] at (03.2,18.1) {$\bfI$};
\node [right] at (03.2,16) {$\bfIsub$};
\node [right] at (03.2,14) {$\bfPsub$};
\node [right] at (03.2,12.2) {$\bfFsub \vee \bfEdual$};
\node [left] at (00,10) {$\bfFsub$}; \node [right] at (06.2,10.1) {$\bfBzero$};
\node [left] at (2.8,07.9) {$\bfE$}; \node [right] at (09.1,08.15) {$\bfEdual$};
\node [right] at (06.2,05.8) {$\bfRq\{xy\}$};
\node [right] at (06.2,03.95) {$\bfRq\{x\}$};
\node [right] at (06.2,01.95) {$\bfRq\{1\}$};
\node [right] at (06.2,00) {$\bftrivial$};
\end{tikzpicture}
\end{center} \caption{The lattice $\frakL(\bfI)$}
\label{F: I}
\end{figure}

\subsection{The varieties \texorpdfstring{$\bfK$}{K} and \texorpdfstring{$\bfKdual$}{K-}} \label{subsec: var K}

Let~$\bfK$ denote the variety generated by the monoid
\begin{align*}
\monK & = \langle a,b,e,\, 1 \,|\, a^3=a^2,\; be=b,\; e^2=e,\; a^2b=ae=eab=eb=0,\; aba^2=aba \rangle \\
& = \{ 0,\, a,\, b,\, e,\, a^2,\, ab,\, ba,\, ea,\, aba,\, ba^2,\, ea^2,\, 1\}.
\end{align*}
The variety~$\bfK$ is not self-dual because $\monK \models \eqref{id: xyxx=xyx}$ and $\monK \not\models \eqref{id: xxyx=xyx}$.

\begin{proposition}[{Gusev and Sapir \cite[Propositions~5.1 and~6.5]{GusSap22}}] \label{P: K}
\begin{enumerate}[\rm(i)] 
\item The variety $\bfK$ is a non-finitely based\textup, finitely generated almost Cross variety.
\item The lattice $\frakL(\bfK)$ is given in Figure~\ref{F: K}\textup; the varieties in this lattice that have not been introduced are
\begin{align*}
\bfFsub \vee \bfAzero & = \var\{ \eqref{id: xyxx=xyx},\; xyhxty \approx yxhxty,\; xhytxy \approx xhytyx,\; xh_1xh_2xh_3x \approx xh_1xh_2h_3x \} \\
\text{and } \ \bfKsub & = \var\{ \eqref{id: xyxx=xyx},\; xyhxty \approx yxhxty,\; x^2y^2 \approx y^2x^2,\; xh_1xh_2xh_3x \approx xh_1xh_2h_3x \}. 
\end{align*}
\end{enumerate}
\end{proposition}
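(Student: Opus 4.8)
The plan is to handle the three defining properties of a Cross variety one at a time, using the description of $\frakL(\bfK)$ as the organizing backbone. That $\bfK$ is finitely generated is immediate from its definition as the variety generated by the finite monoid $\monK$; hence $\bfK$ is locally finite and, by Lemma~\ref{L: LF FG max}(iii), has only finitely many maximal subvarieties. Since $\monK \models \eqref{id: xyxx=xyx}$ but $\monK \not\models \eqref{id: xxyx=xyx}$, the whole analysis takes place inside $\bfJ_2\{\eqref{id: xyxx=xyx}\}$, where the exclusion machinery already assembled applies directly.

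To pin down $\frakL(\bfK)$, I would argue that any proper subvariety $\bfV \subset \bfK$ must omit one of a short list of the "building-block" monoids $\monAzero, \monQ, \monBzero, \monE, \monEdual, \monFsub$, and then invoke the exclusion criteria of Lemmas~\ref{L: isoterm}, \ref{L: exclusion A0 Q B0}, \ref{L: xyx not isoterm}, and~\ref{L: excl Fsub Edual} to force $\bfV$ inside $\bfFsub \vee \bfAzero$ or $\bfKsub$ --- the two varieties carrying the explicit finite bases displayed in the statement. Below these two varieties the lattice is then forced by the already-known lattices $\frakL(\bfF)$ (Proposition~\ref{P: F}) and $\frakL(\bfAzero \vee \bfQ)$ (Proposition~\ref{P: A0 vee Q}) together with routine meet computations, giving the finite diagram of Figure~\ref{F: K}. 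Consequently every proper subvariety of $\bfK$ is small, hence (being locally finite) finitely generated by Lemma~\ref{L: LF FG max}(ii), and finitely based --- for the two maximal ones by the displayed bases, and for everything below them by the explicit bases in Lemma~\ref{L: bases A B E Q} and Propositions~\ref{P: F} and~\ref{P: A0 vee Q}. Thus every proper subvariety of $\bfK$ is Cross, so the proposition will follow once $\bfK$ itself is shown to be non-Cross, which --- the lattice $\frakL(\bfK)$ being finite --- means precisely that $\bfK$ is non-finitely based.

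That last point is the heart of the proposition and the step I expect to be the \emph{main obstacle}. The plan here is the standard "sequence of almost-models" argument: exhibit a family of identities $\sigma_n$ holding in $\bfK$ (built from alternating, Zimin-type words that exploit the interaction of the relations $a^3 = a^2$ and $aba^2 = aba$ defining $\monK$) together with a family of finite monoids $\monM_n$ such that $\monM_n$ satisfies every identity of $\bfK$ of length at most $n$ but violates $\sigma_n$. Then $\monM_n \notin \bfK$, while $\monM_n$ satisfies any prescribed finite subset $\Sigma \subseteq \mathrm{Id}(\bfK)$ once $n$ exceeds the total length of $\Sigma$, so no finite $\Sigma$ axiomatizes $\bfK$; combined with the previous paragraph this makes $\bfK$ non-Cross but minimal such, i.e.\ almost Cross.

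The genuine difficulty inside that step is verifying that $\monM_n$ satisfies all short identities of $\bfK$, which requires a workable combinatorial description of the identities satisfied by $\monK$ --- a "normal form" analysis in the spirit of Lemma~\ref{L: id Q} for $\monQ$, tracking first and last occurrences, the blocks of non-simple variables, and the positions of the distinguished variable $a$. Producing such a description for $\monK$, and then designing $\monM_n$ and $\sigma_n$ so that $\sigma_n$ sits just beyond the reach of length-$n$ identities, is where essentially all the work lies; the remaining checks (that $\sigma_n \in \mathrm{Id}(\bfK)$, that $\monM_n$ is $J$-trivial, that $\sigma_n \notin \mathrm{Id}(\monM_n)$) are routine by comparison.
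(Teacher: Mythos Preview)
The paper does not prove this proposition at all: it is quoted verbatim from Gusev and Sapir \cite[Propositions~5.1 and~6.5]{GusSap22}, so there is no ``paper's own proof'' to compare against. Your proposal is therefore an attempt to reconstruct the external argument rather than the paper's.

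That said, your sketch contains a couple of inaccuracies about the target lattice that would derail the argument. From Figure~\ref{F: K}, the variety $\bfFsub \vee \bfAzero$ is the \emph{unique} maximal subvariety of $\bfK$, and $\bfKsub \subset \bfFsub \vee \bfAzero$; so the disjunction ``force $\bfV$ inside $\bfFsub \vee \bfAzero$ or $\bfKsub$'' is not the right shape---you need to show that every proper subvariety lies under $\bfFsub \vee \bfAzero$, and then that the interval $[\bftrivial,\bfFsub \vee \bfAzero]$ is exactly as drawn. Also, $\monQ \notin \bfK$ (the monoid $\monK$ satisfies $x^2y^2 \approx y^2x^2$ only after the extra relation in $\bfKsub$, and in any case $\bfQ$ does not appear in Figure~\ref{F: K}$)$, so appealing to $\frakL(\bfAzero \vee \bfQ)$ is not quite right; what you actually need is the description of $\frakL(\bfAzero)$ and $\frakL(\bfFsub)$ and the verification that $\bfKsub$ and $\bfFsub \vee \bfAzero$ sit above them as shown.

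Your identification of the non-finite-basis step as the crux is correct, and the ``witnessing monoids $\monM_n$ plus long identities $\sigma_n$'' strategy is indeed how Gusev and Sapir proceed; but your proposal gives no candidate for either $\sigma_n$ or $\monM_n$, and the remark about ``Zimin-type words'' and the relations $a^3=a^2$, $aba^2=aba$ is not enough to recover them. In \cite{GusSap22} the identities come from a specific infinite family tied to the structure of~$\bfO$-type words, and the monoids are built so that a single designated factor can be permuted only by identities exceeding a given length. Without those specifics, the proposal is a correct high-level outline but not yet a proof.
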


\begin{figure}[ht]
\begin{center}
\begin{tikzpicture}[scale=0.4]
\draw [fill] (06,16) circle [radius=\dt];
\draw [fill] (06,14) circle [radius=\dt];
\draw [fill] (04.5,13) circle [radius=\dt]; 
\draw [fill] (03,12) circle [radius=\dt]; \draw [fill] (09,12) circle [radius=\dt]; 
\draw [fill] (00,10) circle [radius=\dt]; \draw [fill] (06,10) circle [radius=\dt];
\draw [fill] (03,08) circle [radius=\dt]; \draw [fill] (09,08) circle [radius=\dt];
\draw [fill] (06,06) circle [radius=\dt];
\draw [fill] (06,04) circle [radius=\dt];
\draw [fill] (06,02) circle [radius=\dt];
\draw [fill] (06,00) circle [radius=\dt];
\draw 
(06,00) -- (06,06) -- (09,08) -- (03,12) -- (06,14) -- (06,16) 
(06,06) -- (03,08) -- (09,12) -- (06,14) 
(03,08) -- (00,10) -- (03,12)
;
\node [right] at (06.1,16.1) {$\bfK$};
\node [right] at (06.1,14.1) {$\bfFsub \vee \bfAzero$};
\node [left] at (04.4,13.3) {$\bfKsub$};
\node [left] at (2.9,12.2) {$\bfFsub \vee \bfEdual$}; \node [right] at (09.1,12) {$\bfAzero$};
\node [left] at (00,10) {$\bfFsub$}; \node [right] at (06.3,09.9) {$\bfBzero$};
\node [left] at (2.8,07.9) {$\bfE$}; \node [right] at (09,08.15) {$\bfEdual$};
\node [right] at (06.2,05.8) {$\bfRq\{xy\}$};
\node [right] at (06.2,03.95) {$\bfRq\{x\}$};
\node [right] at (06.2,01.95) {$\bfRq\{1\}$};
\node [right] at (06.2,00) {$\bftrivial$};
\end{tikzpicture}
\end{center} \caption{The lattice $\frakL(\bfK)$}
\label{F: K}
\end{figure}

\begin{remark}
The equivalence relation~$\sim$ on~$\monK$ with non-singleton classes $\{ba,ba^2\}$ and $\{ea,ea^2\}$ is a congruence on~$\monK$.
The quotient \[ {\monK/\sim} \, = \{ 0,\, a,\, b,\, e,\, a^2,\, ab,\, ba,\, ea,\, aba,\, 1\} \] does not satisfy the identity $xhytxy \approx xhytyx$ of $\bfFsub \vee \bfAzero$ because $abe1ae \neq abe1ea$.
Since $\bfFsub \vee \bfAzero$ is the unique maximal subvariety of~$\bfK$, it follows that~$\bfK$ is also generated by~$\monK/\sim$.
\end{remark}

\begin{lemma} \label{L: excl I K}
Let $\bfV$ be any variety in the interval $[\bfFsub \vee \bfEdual,\, \bfJ_2\{\eqref{id: xyxx=xyx}\}]$.
Suppose that $\bfI,\bfK \nsubseteq \bfV$.
Then $\bfV \subseteq \bfO \cap \bfJ_2$.
\end{lemma}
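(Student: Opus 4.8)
The plan is to argue by contradiction. Since $\bfV \subseteq \bfJ_2$, the conclusion $\bfV \subseteq \bfO \cap \bfJ_2$ is equivalent to $\bfV \subseteq \bfO$, so suppose instead that $\bfV \nsubseteq \bfO$; I aim to deduce $\bfI \subseteq \bfV$ or $\bfK \subseteq \bfV$, contrary to hypothesis. As $\bfV \subseteq \bfJ_2$, the assumption $\bfV \nsubseteq \bfO$ forces $\bfV$ to fail at least one of the two identities in~\eqref{id: O}. The monoid $\monFsub \in \bfFsub \vee \bfEdual \subseteq \bfV$ is noncommutative, so $\bfV$ is noncommutative and hence $xy$ is an isoterm for $\bfV$ by Lemma~\ref{L: comm CR}(ii); moreover $\bfBzero \subseteq \bfFsub \vee \bfEdual \subseteq \bfV$, so $\monBzero, \monE, \monEdual \in \bfV$.

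First I would dispose of the case $\monAzero \notin \bfV$. By Lemma~\ref{L: exclusion A0 Q B0}(i) the variety $\bfV$ then satisfies $(x^2y^2)^2 \approx x^2y^2$, so, exactly as in the proof of Corollary~\ref{C: exclusion A0 Q}(i), it satisfies $x^2y^2 \approx y^2x^2$ and $\bfV \subseteq \bfQ$ by Lemma~\ref{L: bases A B E Q}(iv); but $\bfQ \subseteq \bfO$ by Lemma~\ref{L: id Q}, since in each identity of~\eqref{id: O} the two sides have the same simple variables in the same positions and the same content in every block. This contradicts $\bfV \nsubseteq \bfO$, so from now on $\monAzero \in \bfV$, and since $\monFsub \in \bfV$ as well, $\bfFsub \vee \bfAzero \subseteq \bfV$.

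The heart of the proof is a normal-form analysis. Since $\bfV \subseteq \bfJ_2\{\eqref{id: xyxx=xyx}\}$ and $xy$ is an isoterm for $\bfV$, the word-rewriting machinery of Gusev and Sapir~\cite[proof of Proposition~3.1]{GusSap22} (the same machinery underlying Lemma~\ref{L: O sub}), together with Lemma~\ref{L: Oc sub}, yields a basis of $\bfV$ consisting of $\{\text{\ref{id: aperiodic}}_2, \text{\ref{id: eventually}}_2, \eqref{id: xyxx=xyx}\}$, some identities from~\eqref{id: Oc sub}, and identities of the ``linearized'' shape $\bfh\bfp\bfc \approx \bfh\bfq\bfc$ as in~\eqref{id: O sub yhpc=yhqc} (with one further level of nesting of the block $\bfc$ permitted, since $\bfV$ is not yet known to lie in $\bfO$). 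As $\bfV$ fails an identity of~\eqref{id: O}, this basis does not deduce~\eqref{id: O}, so some identity $\sigma$ of the basis is not a consequence of $\{\text{\ref{id: aperiodic}}_2, \text{\ref{id: eventually}}_2, \eqref{id: O}\}$; fix one of smallest length. Trimming $\sigma$ by the identities already known to hold in $\bfV$ — in particular by all identities satisfied by $\monFsub$, $\monAzero$, and $\monEdual$ — in the style of the reductions in Proposition~\ref{P: subvarieties of O}, Lemma~\ref{L: [A01veeQ,H]}, and Corollary~\ref{C: O cap J2}, one should be able to force $\sigma$ to be equivalent, modulo the theory of $\bfV$, to one of just two critical identities: one witnessing the failure of $xhxyxty \approx xhyxty$, and one witnessing the failure of $xhytxyx \approx xhytyx$.

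It remains, for each of the two critical identities, to recover the relevant almost Cross variety. By Propositions~\ref{P: I} and~\ref{P: K}, $\bfIsub$ is the unique maximal subvariety of $\bfI$ and $\bfFsub \vee \bfAzero$ the unique maximal subvariety of $\bfK$; hence a monoid of $\bfV$ lying in $\bfI$ but not in $\bfIsub$ (respectively, in $\bfK$ but not in $\bfFsub \vee \bfAzero$) already generates $\bfI$ (respectively $\bfK$). Such a monoid — a suitable Rees quotient of a free monoid, or a subdirect product of $\monFsub$, $\monAzero$, $\monEdual$ with a monoid of $\bfV$ witnessing the failing identity of~\eqref{id: O} — can be produced using the presence of $\sigma$ among the identities of $\bfV$, yielding $\bfI \subseteq \bfV$ or $\bfK \subseteq \bfV$, the desired contradiction. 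I expect the trimming step to be the main obstacle: carrying out the precise word combinatorics that reduces the ``bad'' identity $\sigma$ to one of the two critical identities, while keeping track of the consequences of $\monFsub, \monAzero, \monEdual \in \bfV$ and correctly deciding whether $\bfI$ or $\bfK$ results, is more delicate than the classifications of subvarieties of fixed varieties carried out earlier in the paper. A secondary point is making the ``linearized basis'' statement for subvarieties of $\bfJ_2\{\eqref{id: xyxx=xyx}\}$ with $xy$ an isoterm fully explicit; if no quotable version exists, it should be reproved along the lines of Lemma~\ref{L: O sub}.
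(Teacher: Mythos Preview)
Your approach diverges substantially from the paper's, which is far shorter and direct. The paper simply cites two external results: Gusev \cite[Lemma~4.1]{Gus25ijac} gives that, for $\bfV$ in the stated interval, $\bfI \nsubseteq \bfV$ forces $\bfV \models xhxyxty \approx xhyxty$, which is already the first identity of~\eqref{id: O}; and Gusev--Sapir \cite[Lemma~4.3]{GusSap22} gives that $\bfK \nsubseteq \bfV$ forces $\bfV \models xhy^2x \approx xhy(yx)^2$. A four-step deduction combining these two identities with~\eqref{id: xyxx=xyx} then yields the second identity of~\eqref{id: O}. There is no contradiction argument, no case split on $\monAzero$, and no normal-form analysis.

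Your proposal contains a concrete error and a substantial gap. The error: in the case $\monAzero \notin \bfV$ you conclude $\bfV \subseteq \bfQ$ via Lemma~\ref{L: bases A B E Q}(iv), but $\bfQ$ is defined by \emph{both} identities~\eqref{id: xxyx=xyx} and~\eqref{id: xyxx=xyx} together with $x^2y^2 \approx y^2x^2$, while your hypotheses give only~\eqref{id: xyxx=xyx}. Indeed $\monFsub \in \bfV$ and $\monFsub \not\models \eqref{id: xxyx=xyx}$, so $\monFsub \notin \bfQ$ and the inclusion $\bfV \subseteq \bfQ$ is simply false. The gap: your ``linearized basis'' step appeals to the machinery behind Lemma~\ref{L: O sub}, but that lemma is stated only for subvarieties of~$\bfO$, precisely the containment you are trying to prove; an analogue for all of $\bfJ_2\{\eqref{id: xyxx=xyx}\}$ is not available in the paper, and the subsequent ``trimming'' of a generic bad identity down to one of two critical forms is left entirely unspecified. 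The paper sidesteps all of this by importing the needed identities directly from the literature.
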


\begin{proof}
The assumption $\bfI,\bfK \nsubseteq \bfV$ implies that~$\bfV$ satisfies the identities
\begin{align}
xhxyxty & \approx xhyxty, \label{id: excl I} \\
xhy^2x & \approx xhy(yx)^2; \label{id: excl K}
\end{align}
see Gusev \cite[Lemma~4.1]{Gus25ijac} and Gusev and Sapir \cite[Lemma~4.3]{GusSap22}.
Since \[ \bfV \models xhytxyx \stackrel{\eqref{id: excl I}} {\approx} xhyt(yx)^2 \stackrel{\eqref{id: xyxx=xyx}}{\approx} xhyty(yx)^2 \stackrel{\eqref{id: excl K}}{\approx} xhyty^2x \stackrel{\eqref{id: xyxx=xyx}}{\approx} xhytyx \vdash xhytxyx \approx xhytyx, \] the variety satisfies the identities~\eqref{id: O}, so that $\bfV \subseteq \bfO \cap \bfJ_2$.
\end{proof}

\subsection{The varieties \texorpdfstring{$\bfJackson_1$}{Y1} and \texorpdfstring{$\bfJackson_2$}{Y2}} \label{subsec: var Y1 Y2}


\begin{proposition}[Jackson~\cite{Jac05}] \label{P: Jackson}
\begin{enumerate}[\rm(i)] 
\item The varieties \[ \bfJackson_1 = \bfRq\{xhxyty\} \quad \text{and} \quad \bfJackson_2 = \bfJackson_3 \vee \overleftarrow{\bfJackson}\!{}_3, \] where \begin{align*} \bfJackson_3 & = \bfRq\{xhytxy\} = \var\bigg\{ \begin{array}{l} x^2h \approx hx^2, \; xhxtx \approx x^2ht, \; xyhxty \approx yxhxty, \\ xhxyty \approx xhyxty, \; xyhxy \approx xyhyx \end{array} \bigg\} \\ \text{and } \ \overleftarrow{\bfJackson}\!{}_3 & = \bfRq\{xyhxty\} = \var\bigg\{ \begin{array}{l} x^2h \approx hx^2, \; xhxtx \approx x^2ht, \; xyhxy \approx yxhxy, \\ xhxyty \approx xhyxty, \; xhytxy \approx xhytyx \end{array} \bigg\}, \end{align*} are non-finitely based\textup, finitely generated almost Cross varieties.
\item The lattices $\frakL(\bfJackson_1)$ and $\frakL(\bfJackson_2)$ are given in Figure~\ref{F: Jackson}.
\end{enumerate}
\end{proposition}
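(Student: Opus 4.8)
The argument is essentially Jackson's \cite{Jac05}, and the plan is to recover each assertion from the structural tools assembled above. Finite generation and local finiteness are immediate: $\bfJackson_1$ is generated by the finite monoid $\monRq\{xhxyty\}$ and $\bfJackson_2 = \bfJackson_3 \vee \overleftarrow{\bfJackson}\!{}_3$ by the finite monoid $\monRq\{xhytxy\} \times \monRq\{xyhxty\}$; the words generating these Rees quotients have bounded length, so the monoids are {\Jtrivial}, and being finitely generated the varieties are locally finite. For the explicit finite bases of $\bfJackson_3$ and $\overleftarrow{\bfJackson}\!{}_3$ I would verify by direct substitution that each listed identity holds in the relevant monoid, use Lemma~\ref{L: isoterm} to pin down its isoterms, and then reduce an arbitrary identity of the variety to a consequence of the listed ones by a word-rewriting argument analogous to (but much shorter than) the analysis of $\bfO \cap \bfJ_2$ in Proposition~\ref{P: subvarieties of O}.

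The crux, and the step I expect to be hardest, is the failure of finite basability. For each $n$ I would exhibit an identity $\bfu_n \approx \bfv_n$ built from $n$ linked instances of the defining pattern $xhxyty$, so that $\bfu_n$ and $\bfv_n$ share a long scattered block of variables and differ only by a local swap, together with a finite {\Jtrivial} monoid $\monM_n$ --- itself a Rees quotient of a free monoid over words engineered to ``see'' the linked blocks for chains shorter than $n$ but not for chains of length $n$ --- such that (a) $\bfJackson_1 \models \bfu_n \approx \bfv_n$, equivalently $\bfu_n$ and $\bfv_n$ represent the same element of $\monRq\{xhxyty\}$, and (b) $\monM_n$ satisfies every identity of $\bfJackson_1$ whose words have length below a bound growing with $n$, yet $\monM_n \not\models \bfu_n \approx \bfv_n$. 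If $\bfJackson_1$ had a finite basis $\Sigma$, all its identities would have bounded length, so $\monM_n \models \Sigma$ for $n$ large while $\monM_n \not\models \bfu_n \approx \bfv_n$ --- impossible, since $\Sigma$ axiomatizes $\bfJackson_1$. The variety $\bfJackson_2$ is treated by the same scheme with $xhytxy$ interleaved against its reverse $xyhxty$: each of $\bfJackson_3$ and $\overleftarrow{\bfJackson}\!{}_3$ satisfies the relevant ``short'' identities for dual reasons, the join retains neither set of reasons, and no finite consequence of the two finite bases recovers the lost identities. Designing the sequence so that it genuinely holds in the variety, and --- more delicately --- constructing and verifying the critical monoids $\monM_n$, is where the real work lies.

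For part~(ii) I would compute $\frakL(\bfJackson_1)$ and $\frakL(\bfJackson_2)$ directly: Lemma~\ref{L: comm CR} disposes of the commutative and completely regular subvarieties, Lemma~\ref{L: isoterm} locates the isoterms, and a finite case analysis shows that every subvariety is defined by identities of a tightly restricted shape (as in Proposition~\ref{P: subvarieties of O}), from which the finitely many varieties in Figure~\ref{F: Jackson}, their pairwise distinctness, and the covering relations can be read off; the bookkeeping here is the other, if routine, source of effort. Once part~(ii) is in hand, the ``almost Cross'' assertion is automatic: every proper subvariety occurs in the finite lattice and is therefore small, it is locally finite and hence finitely generated by Lemma~\ref{L: LF FG max}(ii), and it is finitely based by the defining identities read off from the lattice --- so every proper subvariety is Cross, while $\bfJackson_1$ and $\bfJackson_2$ themselves are not, being non-finitely based.
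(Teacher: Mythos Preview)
The paper gives no proof of this proposition at all: it is stated with the attribution ``Jackson~\cite{Jac05}'' and is used as a black box, with Figure~\ref{F: Jackson} simply recording the lattices from that reference. So there is nothing in the paper to compare your argument against; the authors rely entirely on the cited source.

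Your sketch is a reasonable outline of how Jackson's original proof goes --- finite generation is clear, the non-finite basis property is obtained via a sequence of long identities witnessed against critical Rees-quotient monoids, and the subvariety lattices are computed by an isoterm analysis --- but be aware that the ``real work'' you flag (constructing the $\monM_n$ and verifying their identities) is substantial and not something the present paper attempts to reproduce. If your goal were to match the paper, the correct move is simply to cite~\cite{Jac05} and move on.
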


\begin{figure}[ht]
\begin{center}
\begin{tikzpicture}[scale=0.4]
\draw [fill] (00,10) circle [radius=\dt];
\draw [fill] (00,08) circle [radius=\dt];
\draw [fill] (00,06) circle [radius=\dt];
\draw [fill] (00,04) circle [radius=\dt];
\draw [fill] (00,02) circle [radius=\dt];
\draw [fill] (00,00) circle [radius=\dt];
\draw (00,00) -- (00,10);
\node [above] at (0.15,10) {$\bfJackson_1$};
\node [right] at (0.2,07.95) {$\bfRq\{aba\}$};
\node [right] at (0.2,05.95) {$\bfRq\{ab\}$};
\node [right] at (0.2,03.95) {$\bfRq\{a\}$};
\node [right] at (0.2,01.95) {$\bfRq\{1\}$};
\node [right] at (0.2,00) {$\bftrivial$};
\draw [fill] (12,12) circle [radius=\dt];
\draw [fill] (09,10) circle [radius=\dt]; \draw [fill] (15,10) circle [radius=\dt];
\draw [fill] (12,08) circle [radius=\dt];
\draw [fill] (12,06) circle [radius=\dt];
\draw [fill] (12,04) circle [radius=\dt];
\draw [fill] (12,02) circle [radius=\dt];
\draw [fill] (12,00) circle [radius=\dt];
\draw (12,00) -- (12,08) -- (09,10) -- (12,12) -- (15,10) -- (12,08);
\node [above] at (12.15,12) {$\bfJackson_2$};
\node [left] at (8.95,10) {$\bfJackson_3$}; \node [right] at (15,10.15) {$\overleftarrow{\bfJackson}\!{}_3$};
\node [right] at (12.2,07.8) {$\bfRq\{aba\}$};
\node [right] at (12.2,05.95) {$\bfRq\{ab\}$};
\node [right] at (12.2,03.95) {$\bfRq\{a\}$};
\node [right] at (12.2,01.95) {$\bfRq\{1\}$};
\node [right] at (12.2,00) {$\bftrivial$};
\end{tikzpicture}
\end{center} \caption{The lattices $\frakL(\bfJackson_1)$ and $\frakL(\bfJackson_2)$}
\label{F: Jackson}
\end{figure}

\begin{lemma} \label{L: xyx isoterm}
Let~$\bfV$ be any subvariety of~$\bbJ$ such that $\monRq\{xhx\} \in \bfV$.
Then~$\bfV$ is Cross if and only if $\bfL, \bfP, \bfPdual, \bfJackson_1, \bfJackson_2 \nsubseteq \bfV$.
\end{lemma}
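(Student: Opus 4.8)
\emph{Necessity.} Every subvariety $\bfU$ of a Cross variety $\bfV$ is again Cross: $\bfU$ is small (a sublattice of the finite lattice $\frakL(\bfV)$) and locally finite, hence finitely generated by Lemma~\ref{L: LF FG max}(ii), and it is finitely based because, choosing for each of the finitely many covers $\bfU_1,\dots,\bfU_k$ of $\bfU$ in $\frakL(\bfV)$ an identity $\epsilon_i$ with $\bfU\models\epsilon_i$ but $\bfU_i\not\models\epsilon_i$, one gets $\bfU=\bfV\{\epsilon_1,\dots,\epsilon_k\}$ (any subvariety of $\bfV$ lying strictly above $\bfU$ would contain some $\bfU_i$). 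Since $\bfL,\bfP,\bfPdual$ are non-finitely generated (Propositions~\ref{P: L} and~\ref{P: P}) and $\bfJackson_1,\bfJackson_2$ are non-finitely based (Proposition~\ref{P: Jackson}), none of the five is Cross, so none can be a subvariety of a Cross~$\bfV$.

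\emph{Sufficiency: setup.} Suppose $\bfL,\bfP,\bfPdual,\bfJackson_1,\bfJackson_2\nsubseteq\bfV$. If $\bfV$ is commutative it is Cross by Lemma~\ref{L: comm CR}(iii), so assume $\bfV$ is noncommutative. Then $xy$ is an isoterm for $\bfV$ by Lemma~\ref{L: comm CR}(ii), so $\monRq\{xy\},\monRq\{xhx\}\in\bfV$ by Lemma~\ref{L: isoterm}, and $\bfV\subseteq\bfJ_n$ for some $n\ge 2$ by Lemma~\ref{L: subvarieties of J}(ii). Since $\bfJackson_1=\bfRq\{xhxyty\}$, $\bfJackson_3=\bfRq\{xhytxy\}$ and $\overleftarrow{\bfJackson}\!{}_3=\bfRq\{xyhxty\}$, Lemma~\ref{L: isoterm} recasts the hypotheses as: $xhxyty$ is not an isoterm for~$\bfV$, and at least one of $xhytxy$, $xyhxty$ is not an isoterm for~$\bfV$. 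The hypothesis set is self-dual (each of $\bfL,\bfJackson_1,\bfJackson_2$ and the monoid $\monRq\{xhx\}$ is self-dual, and $\overleftarrow{\bfJackson}\!{}_3\nsubseteq\bfV$ iff $\bfJackson_3\nsubseteq\overleftarrow{\bfV}$), so replacing $\bfV$ by $\overleftarrow{\bfV}$ if necessary I may assume $xhytxy$ is not an isoterm for~$\bfV$.

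\emph{Sufficiency: stabilization identities.} The crux is to show
\[ \bfV\models xhxyty\approx xhyxty \qquad\text{and}\qquad \bfV\models xhytxy\approx xhytyx. \]
For the first, write $\bfV\models xhxyty\approx\bfw$ with $\bfw\ne xhxyty$. Deleting all but one variable, and using that $\bfV$ is aperiodic and not completely regular (Lemma~\ref{L: comm CR}(i),(ii) would otherwise make it commutative), forces $|\bfw|_h=|\bfw|_t=1$; deleting $\{y,t\}$, respectively $\{x,h\}$, and using that every proper factor of the isoterm $xhx$ is an isoterm forces $|\bfw|_x=|\bfw|_y=2$, that the $\{x,h,t\}$-projection of $\bfw$ equals $xhxt$ (itself an isoterm, since $xtx$ is a renaming of $xhx$) and that its $\{y,t\}$-projection equals $yty$; these constraints leave $\bfw\in\{yxhxty,\,xyhxty,\,xhyxty\}$. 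Deleting $x$ yields $\bfV\models hyty\approx\bfw|_{x\mapsto 1}$; since $\monRq\{xhx\}\in\bfV$ and $\monRq\{xhx\}\not\models hyty\approx yhty$, the reduct $\bfw|_{x\mapsto 1}$ cannot be $yhty$, hence equals $hyty$, forcing $\bfw=xhyxty$. The second identity follows by the analogous argument applied to $xhytxy$.

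\emph{Sufficiency: closing, and the main obstacle.} It remains to split on whether $xyhxty$ is also a non-isoterm. If it is, the same method gives $\bfV\models xyhxty\approx yxhxty$, so $\bfV$ satisfies three of the four defining identities of~$\bfL$; together with $\bfV\subseteq\bfJ_n$ this confines $\bfV$ to an explicit finitely based, locally finite variety whose subvariety lattice extends $\frakL(\bfL)$ (Proposition~\ref{P: L}(ii)) by a further chain of finitely generated varieties, and since $\bfL\nsubseteq\bfV$ the variety $\bfV$ lies strictly below this ceiling and is Cross. If $xyhxty$ remains an isoterm, then $\overleftarrow{\bfJackson}\!{}_3\subseteq\bfV$ and $\bfV$ is instead confined below the Cross variety $\overleftarrow{\bfJackson}\!{}_3$ together with the finitely generated extensions compatible with the two stabilization identities; here one uses Propositions~\ref{P: P}, \ref{P: I} and~\ref{P: K}, Lemmas~\ref{L: excl P} and~\ref{L: excl I K}, and the description of the noncommutative subvarieties of $\bfO\cap\bfJ_2$ in Proposition~\ref{P: subvarieties of O} together with Corollary~\ref{C: O cap J2}, to show that excluding $\bfL,\bfP,\bfPdual$ on top of $\bfJackson_1,\bfJackson_2$ pushes $\bfV$ strictly below a Cross variety. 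The main obstacle is precisely this last case analysis: for each configuration of which of $xhxyty,xhytxy,xyhxty$ is an isoterm, one must identify the finitely many Cross varieties that can bound $\bfV$ from above and verify that the three non-finitely-generated exclusions are exactly what keeps $\bfV$ below one of them; the configuration in which $xyhxty$ is still an isoterm is the touchiest, since there $\bfV$ need not satisfy the left stabilization identity $xyhxty\approx yxhxty$ and one cannot simply invoke $\bfL$.
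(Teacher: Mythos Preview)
Your necessity argument and your derivation of the two ``stabilization identities'' are essentially correct; in fact you have rederived Lee's result \cite[Lemma~2.2]{Lee12} that, under the stated hypotheses, $\bfV$ is contained in the variety $\mathbf{D}=\var\{xhxyty\approx xhyxty,\;xhytxy\approx xhytyx\}$ (or its dual). This is exactly the first step of the paper's proof, which simply cites that lemma rather than reproving it.

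The genuine gap is your closing section. Having shown $\bfV\subseteq\mathbf{D}$, the paper finishes in one line by invoking Gusev \cite[Proposition~4.1]{Gus25ijac}: \emph{$\bfL$ and $\bfP$ are the only almost Cross subvarieties of $\mathbf{D}$}. Since you assume $\bfL,\bfP\nsubseteq\bfV$, the variety $\bfV$ contains no almost Cross subvariety and is therefore Cross. You instead attempt a fresh case analysis, and this is where the argument breaks down. Your first branch (``if $xyhxty$ is also a non-isoterm'') asserts that satisfying three of the defining identities of~$\bfL$ together with $\bfV\subseteq\bfJ_n$ confines~$\bfV$ below some chain extending $\frakL(\bfL)$; but $\bfL$ is defined with $\text{\ref{id: aperiodic}}_2$, and you have no bound on~$n$, so there is no such chain and the claim is unsupported. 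Your second branch is worse: you say ``$\bfV$ is confined below the Cross variety $\overleftarrow{\bfJackson}\!{}_3$'', yet in that branch $\overleftarrow{\bfJackson}\!{}_3\subseteq\bfV$, so $\bfV$ lies \emph{above} it, not below. You then gesture at Lemmas~\ref{L: excl P} and~\ref{L: excl I K}, Proposition~\ref{P: subvarieties of O}, and Corollary~\ref{C: O cap J2}, but every one of those results has the standing hypothesis $\bfV\subseteq\bfJ_2\{\eqref{id: xyxx=xyx}\}$ (or stronger), which you have not established and which need not hold for a general subvariety of~$\mathbf{D}$. The honest summary is that you have reduced the lemma to the statement ``$\bfL$ and $\bfP$ exhaust the almost Cross subvarieties of~$\mathbf{D}$'', and that statement needs an independent proof---which is precisely what the paper imports from~\cite{Gus25ijac}.
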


\begin{proof}
The varieties $\bfL, \bfP, \bfPdual, \bfJackson_1, \bfJackson_2$ are almost Cross by Propositions~\ref{P: L}, \ref{P: P}, and~\ref{P: Jackson}.
Hence if~$\bfV$ is Cross, then clearly $\bfL, \bfP, \bfPdual, \bfJackson_1, \bfJackson_2 \nsubseteq \bfV$.

Conversely, suppose $\bfL, \bfP, \bfPdual, \bfJackson_1, \bfJackson_2 \nsubseteq \bfV$.
Then since $\bfRq\{xhx\} \subseteq \bfV$,\, $\bfRq\{ xhxyty \} \nsubseteq \bfV$, and $\bfRq\{ xhytxy \} \vee \bfRq\{ xyhxty\} \nsubseteq \bfV$,
by Lemma~\ref{L: isoterm}, one of the following holds:
\begin{enumerate}[\ (a)]
\item $xhx$ is an isoterm for~$\bfV$ and $xhxyty$ and $xhytxy$ are not isoterms for~$\bfV$;
\item $xhx$ is an isoterm for~$\bfV$ and $xhxyty$ and $xyhxty$ are not isoterms for~$\bfV$.
\end{enumerate}
It follows that~$\bfV$ is contained in the variety $\mathbf{D} = \var\{ xhxyty \approx xhyxty,\; xhytxy \approx xhytyx \}$ or its dual~$\overleftarrow{\mathbf{D}}$ \cite[Lemma~2.2]{Lee12}.
But~$\bfL$ and~$\bfP$ are the only almost Cross subvarieties of~$\mathbf{D}$, while~$\bfL$ and~$\bfPdual$ are the only almost Cross subvarieties of~$\overleftarrow{\mathbf{D}}$ \cite[Proposition~4.1]{Gus25ijac}.
Consequently, $\bfV$ does not contain any almost Cross subvarieties and so is Cross.
\end{proof}

\subsection{The variety \texorpdfstring{$\bfZL$}{Z}} \label{subsec: var ZL}

Recall from Remark~\ref{R: Hm} that variety $\bfHsub = \bfH\{ \text{\ref{id: Hsub}}_3 \}$ is generated by the monoid~$\monHsub$.
Although the varieties~$\bfHsub$ and~$\bfHsubdual$ are finitely based, their join \[ \bfZL = \bfHsub \vee \bfHsubdual \] is non-finitely based~\cite{ZhaLuo16}.
In fact, Zhang and Luo~\cite{ZhaLuo19} have shown that~$\bfZL$ is almost Cross; however, their result has remained unpublished.
An independent, shorter proof of this result is given in the present subsection.

\begin{lemma} \label{L: excl H3}
Let~$\bfV$ be any variety in the interval $[\bfQ,\, \bfJ_2\{\eqref{id: xxyx=xyx},\eqref{id: xyxx=xyx} \}]$.
Then
\begin{enumerate}[\rm(i)] 
\item $\monHsub \notin \bfV$ implies that $\bfV \subseteq \bfHdual$\textup;
\item $\monHsubdual \notin \bfV$ implies that $\bfV \subseteq \bfH$.
\end{enumerate}
\end{lemma}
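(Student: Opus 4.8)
The plan is to obtain part~(ii) from part~(i) by duality, and to prove part~(i) by reducing it, after one short observation, to a single claim: that every sufficiently large subvariety of $\bfJ_2\{\eqref{id: xxyx=xyx},\eqref{id: xyxx=xyx}\}$ that fails the reverse of~\eqref{id: basis H} must already contain~$\monHsub$. Since~$\bfQ$ is self-dual (by the remark following Lemma~\ref{L: bases A B E Q}), the pair of identities $\{\eqref{id: xxyx=xyx},\eqref{id: xyxx=xyx}\}$ is interchanged by reversal, and $\bfJ_2$ is self-dual, the variety $\bfJ_2\{\eqref{id: xxyx=xyx},\eqref{id: xyxx=xyx}\}$ is self-dual; as $\overleftarrow{\monHsub}=\monHsubdual$ and $\overleftarrow{\bfH}=\bfHdual$, part~(ii) is precisely part~(i) applied to~$\overleftarrow{\bfV}$. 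For part~(i), I would write~$\bfe$ for the identity $xy^2xhx\approx xy^2hx$, which is the reverse of~\eqref{id: basis H}; then $\bfHdual=\bfJ_2\{\eqref{id: xxyx=xyx},\eqref{id: xyxx=xyx},\bfe\}$, so since $\bfV$ satisfies $\text{\ref{id: aperiodic}}_2$, $\text{\ref{id: eventually}}_2$, \eqref{id: xxyx=xyx} and~\eqref{id: xyxx=xyx} by hypothesis, the conclusion $\bfV\subseteq\bfHdual$ amounts to $\bfV\models\bfe$.

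Suppose, seeking a contradiction, that $\bfV\not\models\bfe$. Then $\monAzero\in\bfV$: otherwise Corollary~\ref{C: exclusion A0 Q}(i) gives $\bfV\subseteq\bfQ$, and since $\monAzero,\monQ\in\bfH$ we have $\bfAzero\vee\bfQ\subseteq\bfH$, so as $\bfAzero\vee\bfQ$ is self-dual (Remark~\ref{R: Hm}(i)) it is also contained in $\bfHdual=\overleftarrow{\bfH}$; thus $\bfV\subseteq\bfQ\subseteq\bfAzero\vee\bfQ\subseteq\bfHdual$, forcing $\bfV\models\bfe$, a contradiction. Hence $\monAzero\in\bfV$, and together with $\bfQ\subseteq\bfV$ this yields $\bfAzero\vee\bfQ\subseteq\bfV$. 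It therefore remains to prove the following claim, which is the heart of the matter: \emph{if $\bfAzero\vee\bfQ\subseteq\bfW\subseteq\bfJ_2\{\eqref{id: xxyx=xyx},\eqref{id: xyxx=xyx}\}$ and $\bfW\not\models\bfe$, then $\monHsub\in\bfW$.} Applying it with $\bfW=\bfV$ contradicts the hypothesis $\monHsub\notin\bfV$, which finishes part~(i).

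To prove the claim I would argue as follows. Since $\bfW\models\eqref{id: xyxx=xyx}$, the variety~$\bfW$ is locally finite by Lemma~\ref{L: LF FG max}(i), so the relatively free monoid~$F$ of~$\bfW$ over $\{x,y,h\}$ is a finite member of~$\bfW$, and $\overline{xy^2xhx}\neq\overline{xy^2hx}$ in~$F$ because $\bfW\not\models\bfe$. Using the presentation of~$\monHsub$ in Remark~\ref{R: Hm}(i), one checks directly that the assignment $\varphi_0\colon x\mapsto e,\ y\mapsto f,\ h\mapsto a$ sends $xy^2xhx$ to~$0$ and $xy^2hx$ to the nonzero element~$ea$, and that $\{e,f,a\}$ generates~$\monHsub$; so it suffices to show that $\varphi_0$ extends to a homomorphism $F\to\monHsub$, which is then necessarily surjective and forces $\monHsub\in\bfW$. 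Equivalently, one must verify that every identity $\bfu\approx\bfv$ over $\{x,y,h\}$ holding in~$\bfW$ also holds in~$\monHsub$ under~$\varphi_0$. The three ingredients that make this possible are: the identities \eqref{id: xxyx=xyx}, \eqref{id: xyxx=xyx}, $\text{\ref{id: aperiodic}}_2$, $\text{\ref{id: eventually}}_2$ reduce $\bfu$ and $\bfv$ to normal forms of bounded length, of which there are only finitely many in three variables up to renaming; the inclusion $\bfAzero\vee\bfQ\subseteq\bfW$ forces each identity of~$\bfW$ to hold in both~$\monAzero$ and~$\monQ$, so by Lemma~\ref{L: id Q} these normal forms carry the same simple variables in the same order and the same content in each block of non-simple variables; and $\bfW\not\models\bfe$ removes the one remaining obstruction. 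Concretely, the whole verification comes down to listing the finitely many identities over three variables that hold in $\bfAzero\vee\bfQ$ but fail in~$\monHsub$ under~$\varphi_0$, and confirming that each of them, modulo $\bfJ_2\{\eqref{id: xxyx=xyx},\eqref{id: xyxx=xyx}\}$, entails~$\bfe$; such an identity then cannot belong to the equational theory of~$\bfW$, so no obstruction survives. I expect this last finite check, together with the bookkeeping over the relevant normal forms, to be the most laborious step, even though each individual case is a routine computation in the seven-element monoid~$\monHsub$.
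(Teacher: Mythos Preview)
Your reduction is logically sound: duality gives~(ii) from~(i); within the interval, $\bfV\subseteq\bfHdual$ is equivalent to $\bfV\models\bfe$; and the surjection argument via the three-generated relatively free object is a correct way to force $\monHsub\in\bfW$. But the ``finite check'' you defer to is the entire content of the lemma. Your claim is tautologically equivalent to what you set out to prove: saying that every three-variable identity of $\bfAzero\vee\bfQ$ that fails in $\monHsub$ under $\varphi_0$ must, together with the ambient identities, derive $\bfe$ is just a computational restatement of ``$\monHsub\notin\bfW\Rightarrow\bfW\models\bfe$'' in the relevant interval. Nothing has been gained except the observation that the check is finite, and that finiteness is not obviously helpful---the three-generated relatively free monoid of $\bfJ_2\{\eqref{id: xxyx=xyx},\eqref{id: xyxx=xyx}\}$ has many elements, so the list of candidate identities is large and you give no mechanism for organising it.

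The paper avoids the brute force entirely by importing a structural result. It notes that $\monHsub$ embeds in O.\,B.\ Sapir's monoid $M_\gamma(\mathtt{a^+b^+ta^+})$ and invokes her characterisation \cite[Corollary~3.6 and Remark~4.4(ii)]{SapO21}: if $\monHsub\notin\bfV$ then $M_\gamma(\mathtt{a^+b^+ta^+})\notin\bfV$, and since $\bfQ\subseteq\bfV$, Lemma~\ref{L: id Q} forces $\bfV$ to satisfy an identity of the shape $xy^2tx\approx\bfw tx$ with $\bfw\in\{x,y\}^*$ containing $yx$ as a factor. From this single identity one derives $xy^2tx\approx(xy)^2tx$ and then $xy^2xtx\approx xy^2tx$ by a five-line chain of deductions using only $\text{\ref{id: aperiodic}}_2$, $\text{\ref{id: eventually}}_2$, $\eqref{id: xxyx=xyx}$, $\eqref{id: xyxx=xyx}$. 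So where you propose to examine an unbounded-looking list of identities and show each implies~$\bfe$, the paper's argument isolates \emph{one} identity that $\bfV$ is forced to satisfy and shows that this one implies~$\bfe$. Your approach would work in principle, but the paper's is both shorter and conceptually sharper: it locates exactly which feature of $\monHsub$ (its role inside $M_\gamma(\mathtt{a^+b^+ta^+})$) is responsible for the implication.
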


\begin{proof}
(i) Suppose that $\monHsub \notin \bfV$.
Then since $\bfH = \bfJ_2 \{ \eqref{id: xxyx=xyx},\, \eqref{id: xyxx=xyx},\, xhxy^2x \approx xhy^2x \}$, it suffices to show that~$\bfV$ satisfies the identity \begin{equation} xy^2xtx \approx xy^2tx. \label{id: basis Hdual} \end{equation}
Now since the monoid~$\monHsub$ is embeddable in the monoid $M_\gamma(\mathtt{a^+b^+ta^+})$ from O.B.\ Sapir \cite[Remark~4.4(ii)]{SapO21}, the assumption $\monHsub \notin \bfV$ implies that $M_\gamma(\mathtt{a^+b^+ta^+}) \notin \bfV$ \cite[Corollary~3.6]{SapO21}; and since $\bfQ \subseteq \bfV$ by assumption, it follows from Lemma~\ref{L: id Q} that~$\bfV$ satisfies an identity $xy^2tx \approx \bfw tx$ for some word $\bfw \in \{x,y\}^*$ containing~$yx$ as a factor.
Multiplying both sides of this identity on the left by~$x$ and making the substitution $t \mapsto yt$ result in the identity $x^2y^3tx \approx x\bfw ytx$ of~$\bfV$.
Then
\begin{align*}
\bfV & \models \{ \text{\ref{id: aperiodic}}_2,\, \text{\ref{id: eventually}}_2,\, \eqref{id: xxyx=xyx},\, \eqref{id: xyxx=xyx},\, x^2y^3tx \approx x\bfw ytx\} \\
& \vdash \{ \text{\ref{id: aperiodic}}_2,\, \text{\ref{id: eventually}}_2,\, \eqref{id: xxyx=xyx},\, \eqref{id: xyxx=xyx},\, xy^2tx \approx (xy)^2tx\} \\
& \vdash xy^2tx \approx (xy)^2tx \stackrel{\text{\ref{id: eventually}}_2}{\approx} (yx)^2tx \stackrel{\eqref{id: xxyx=xyx}}{\approx} (yx)^2xtx \stackrel{\text{\ref{id: eventually}}_2}{\approx} (xy)^2xtx \approx xy^2xtx \\ 
& \vdash \eqref{id: basis Hdual}. 
\end{align*}

(ii) This is symmetrical to part~(i). 
\end{proof}

\begin{proposition}[{Zhang and Luo~\cite{ZhaLuo19}}] \label{P: ZL}
\begin{enumerate}[\rm(i)] 
\item The variety~$\bfZL$ is a non-finitely based\textup, finitely generated almost Cross variety.
\item The lattice $\frakL(\bfZL)$ is given in Figure~\ref{F: ZL}.
\end{enumerate}
\end{proposition}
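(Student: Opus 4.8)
The plan is to determine the whole subvariety lattice of $\bfZL$ from the already-established structure of $\bfHsub$, $\bfHsubdual$, $\bfAzero\vee\bfQ$, and $\bfH$, together with Lemma~\ref{L: excl H3}; part~(i) will then drop out. First note that $\bfZL=\bfHsub\vee\bfHsubdual$ is generated by the finite monoid $\monHsub\times\monHsubdual$, so it is finitely generated; and since $\bfHsub\subseteq\bfH$ and $\bfHsubdual\subseteq\bfHdual$ both satisfy $\eqref{id: xxyx=xyx}$ and $\eqref{id: xyxx=xyx}$, so does $\bfZL$, whence $\bfZL$ is locally finite by Lemma~\ref{L: LF FG max}(i) and $\bfZL\subseteq\bfJ_2\{\eqref{id: xxyx=xyx},\eqref{id: xyxx=xyx}\}$. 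As $\bfAzero\vee\bfQ\subseteq\bfHsub\subseteq\bfZL$, Corollary~\ref{C: exclusion A0 Q}(iii) gives $\frakL(\bfZL)=\frakL(\bfAzero\vee\bfQ)\cup[\bfAzero\vee\bfQ,\bfZL]$, and $\frakL(\bfAzero\vee\bfQ)$ is known from Proposition~\ref{P: A0 vee Q}(ii); so it remains to compute the interval $[\bfAzero\vee\bfQ,\bfZL]$.

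I claim $[\bfAzero\vee\bfQ,\bfZL]=\{\bfAzero\vee\bfQ,\bfHsub,\bfHsubdual,\bfZL\}$. Let $\bfV$ be a variety in this interval with $\bfV\neq\bfZL$. Since $\bfZL$ is generated by $\{\monHsub,\monHsubdual\}$, the variety $\bfV$ must exclude at least one of these monoids; assume $\monHsub\notin\bfV$ (the case $\monHsubdual\notin\bfV$ being dual). Because $\bfQ\subseteq\bfAzero\vee\bfQ\subseteq\bfV$, the variety $\bfV$ lies in $[\bfQ,\bfJ_2\{\eqref{id: xxyx=xyx},\eqref{id: xyxx=xyx}\}]$, so Lemma~\ref{L: excl H3}(i) yields $\bfV\subseteq\bfHdual$; then by the dual of Lemma~\ref{L: [A01veeQ,H]} (legitimate since $\bfAzero\vee\bfQ$ is self-dual) we get $\bfV=\overleftarrow{\bfH}_m$ for some $m\geq 2$, with $\overleftarrow{\bfH}_2=\bfAzero\vee\bfQ$ and $\overleftarrow{\bfH}_3=\bfHsubdual$. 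It therefore suffices to show that $\overleftarrow{\bfH}_m\subseteq\bfZL$ forces $m\leq 3$.

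The key point is that $\bfZL$ satisfies the dual $\sigma$ of the identity $\text{\ref{id: Hsub}}_3$. On one hand, $\bfHsubdual$---being the dual of $\bfHsub=\bfH_3=\bfH\{\text{\ref{id: Hsub}}_3\}$---satisfies $\sigma$ by definition, and $\bfHsub$ is generated by the finite monoid $\monHsub$; so once one verifies directly that $\monHsub\models\sigma$, it follows that $\bfZL=\bfHsub\vee\bfHsubdual\models\sigma$ (and, dually, $\bfZL\models\text{\ref{id: Hsub}}_3$). On the other hand, the dual of the argument in the proof of Proposition~\ref{P: H}---which shows $\bfH_k\not\models\text{\ref{id: Hsub}}_m$ whenever $k>m$---gives $\overleftarrow{\bfH}_m\not\models\sigma$ for every $m\geq 4$. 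Hence $\overleftarrow{\bfH}_m\subseteq\bfZL\models\sigma$ implies $m\leq 3$, so $\bfV\in\{\bfAzero\vee\bfQ,\bfHsubdual\}$; symmetrically, $\monHsubdual\notin\bfV$ forces $\bfV\in\{\bfAzero\vee\bfQ,\bfHsub\}$. Since $\bfHsub$ is not self-dual (Remark~\ref{R: Hm}(i)), the four varieties $\bfAzero\vee\bfQ$, $\bfHsub$, $\bfHsubdual$, $\bfZL$ are pairwise distinct, which establishes the claimed interval and hence the lattice $\frakL(\bfZL)$ of Figure~\ref{F: ZL}.

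Part~(i) is then immediate: $\bfZL$ is non-finitely based by Zhang and Luo~\cite{ZhaLuo16} and so is not Cross, while every proper subvariety of $\bfZL$ lies in the finite set $\frakL(\bfAzero\vee\bfQ)\cup\{\bfHsub,\bfHsubdual\}$, each member of which is finitely based (by Proposition~\ref{P: A0 vee Q}, Lemma~\ref{L: bases A B E Q}, and the defining identities of $\bfHsub$ and $\bfHsubdual$) and small, hence finitely generated by Lemma~\ref{L: LF FG max}(ii) and so Cross; thus $\bfZL$ is almost Cross. The one step that I expect to require real computation is the verification that $\monHsub\models\sigma$: this is a several-variable word identity that has to be checked against every substitution into the finite monoid $\monHsub$ (where the collapsing behaviour of the zero makes the check finite but fiddly), and it is precisely what keeps the varieties $\overleftarrow{\bfH}_m$ with $m\geq 4$ out of $\bfZL$, i.e.\ what makes $\frakL(\bfZL)$ finite.
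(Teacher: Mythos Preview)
Your proof is correct and follows essentially the same route as the paper's: both show that any proper subvariety of $\bfZL$ containing $\bfQ$ lands in $\bfH$ or $\bfHdual$ via Lemma~\ref{L: excl H3}, and then use the fact that $\bfZL$ satisfies $\text{\ref{id: Hsub}}_3$ and its dual (your ``$\monHsub\models\sigma$'' check is exactly the paper's ``routinely checked'' item~(a)) to cut down to $\bfHsub$ or $\bfHsubdual$. The only organizational differences are that the paper treats the case $\monQ\notin\bfV$ separately rather than invoking Corollary~\ref{C: exclusion A0 Q}(iii) up front, and the paper writes the final bound as the intersection $\bfHdual\cap\overleftarrow{\bfJ_2\{\eqref{id: xxyx=xyx},\eqref{id: xyxx=xyx},\text{\ref{id: Hsub}}_3\}}=\bfHsubdual$ directly, whereas you pass through the dual of Lemma~\ref{L: [A01veeQ,H]} to identify $\bfV$ as some $\overleftarrow{\bfH}_m$ and then exclude $m\geq 4$; the paper's intersection argument is slightly shorter but both are valid.
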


\begin{proof}
The variety~$\bfZL$ is non-finitely based by Zhang and Luo~\cite{ZhaLuo16}; see also O.B.\ Sapir \cite[Corollary~5.5]{SapO21}.
Since~$\bfZL$ is generated by the monoids~$\monHsub$ and~$\monHsubdual$, it is routinely checked that
\begin{enumerate}[\ (a)]
\item $\bfZL \subseteq \bfJ_2\{\eqref{id: xxyx=xyx},\eqref{id: xyxx=xyx},\, \text{\ref{id: Hsub}}_3\}$ and $\bfZL \subseteq \overleftarrow{\;\bfJ_2\{\eqref{id: xxyx=xyx},\eqref{id: xyxx=xyx},\, \text{\ref{id: Hsub}}_3\}}$.
\end{enumerate}

Let~$\bfV$ be any proper subvariety of~$\bfZL$, so that either $\monHsub \notin \bfV$ or $\monHsubdual \notin \bfV$.
If $\monQ \notin \bfV$, then $\bfV \subseteq \bfAzero$ by Corollary~\ref{C: exclusion A0 Q}(ii), so that $\bfV \subseteq \bfHsub$ by Figure~\ref{F: H}.
If $\monQ \in \bfV$, then by Lemma~\ref{L: excl H3}, either $\bfV \subseteq \bfHdual$ or $\bfV \subseteq \bfH$, so that by~(a), either \[ \bfV \subseteq \bfHdual \cap \overleftarrow{\;\bfJ_2\{\eqref{id: xxyx=xyx},\eqref{id: xyxx=xyx},\, \text{\ref{id: Hsub}}_3\}} = \bfHsubdual \quad \text{or} \quad \bfV \subseteq \bfH \cap \bfJ_2\{\eqref{id: xxyx=xyx},\eqref{id: xyxx=xyx},\, \text{\ref{id: Hsub}}_3\} = \bfHsub. \]
Therefore,
\begin{enumerate}[\ (a)]
\item[(b)] $\bfHsub$ and~$\bfHsubdual$ are the only maximal subvarieties of~$\bfZL$.
\end{enumerate}

By Lemma~\ref{L: H2} and Figure~\ref{F: H}, the variety $\bfH_2 = \bfAzero \vee \bfQ$ is the unique maximal subvariety of~$\bfHsub$.
But since $\bfH_2$ is self-dual (see Remark~\ref{R: Hm}), it is also the unique maximal subvariety of~$\bfHsubdual$.
In other words,
\begin{enumerate}[\ (a)]
\item[(c)] $\bfH_2$ is the unique maximal subvariety of~$\bfHsub$ and of~$\bfHsubdual$.
\end{enumerate}
The description of $\frakL(\bfZL)$ in Figure~\ref{F: ZL} now follows from~(b), (c), and the description of $\frakL(\bfH_2)$ in Figure~\ref{F: H}.
Since the varieties~$\bfHsub$ and~$\bfHsubdual$ are Cross by Proposition~\ref{P: H}, the non-finitely based variety~$\bfZL$ is almost Cross.
\end{proof}

\begin{figure}[ht]
\begin{center}
\begin{tikzpicture}[scale=0.4]
\draw [fill] (03,18) circle [radius=\dt];
\draw [fill] (00,16) circle [radius=\dt]; \draw [fill] (06,16) circle [radius=\dt]; 
\draw [fill] (03,14) circle [radius=\dt];
\draw [fill] (00,12) circle [radius=\dt]; \draw [fill] (06,12) circle [radius=\dt]; 
\draw [fill] (03,10) circle [radius=\dt];
\draw [fill] (00,08) circle [radius=\dt]; \draw [fill] (06,08) circle [radius=\dt];
\draw [fill] (03,06) circle [radius=\dt];
\draw [fill] (03,04) circle [radius=\dt];
\draw [fill] (03,02) circle [radius=\dt];
\draw [fill] (03,00) circle [radius=\dt];
\draw (03,00) -- (03,06) -- (00,08) -- (06,12) -- (03,14) -- (00,16) -- (03,18) -- (06,16) -- (00,12) -- (06,08) -- (03,06);
\node [above] at (03,18.15) {$\bfZL$};
\node [left] at (-0.15,15.95) {$\bfH_3$}; \node [right] at (06.1,16.1) {$\overleftarrow{\bfH}\!{}_3$};
\node [right] at (03.3,13.95) {$\bfH_2$};
\node [left] at (-0.2,12) {$\bfAzero$}; \node [right] at (06.2,12) {$\bfQ$};
\node [right] at (03.3,9.95) {$\bfBzero$};
\node [left] at (-0.2,08) {$\bfE$}; \node [right] at (06.1,08.15) {$\bfEdual$};
\node [right] at (03.2,05.8) {$\bfRq\{xy\}$};
\node [right] at (03.2,03.95) {$\bfRq\{x\}$};
\node [right] at (03.2,01.95) {$\bfRq\{1\}$};
\node [right] at (03.2,00) {$\bftrivial$};
\end{tikzpicture}
\end{center} \caption{The lattice $\frakL(\bfZL)$}
\label{F: ZL}
\end{figure}

\begin{remark}
The reference items that cited Zhang and Luo~\cite{ZhaLuo19} are the three articles by Gusev~\cite{Gus25sf}, Gusev and Sapir~\cite{GusSap22}, and O.B.\@~Sapir \cite{SapO21}; the precise results from these three articles that are required in the present article are
\begin{enumerate}[(i)] 
\item Gusev and Sapir \cite[Lemma~2.7]{GusSap22},
\item Gusev and Sapir \cite[proof of Proposition~3.1]{GusSap22}, 
\item Gusev \cite[Lemma~6.7]{Gus25sf}, and
\item O.B.\ Sapir \cite[Corollary~3.6 and Remark~4.4(ii)]{SapO21}.
\end{enumerate}
Specifically, (i)--(iv) are used to establish Lemmas~\ref{L: comm CR}(ii), \ref{L: O sub}(i), \ref{L: Oc sub}, and~\ref{L: excl H3}, respectively.
But one can routinely check that (i)--(iv) were established without using any results from Zhang and Luo~\cite{ZhaLuo19}.
Consequently, the  proof of Proposition~\ref{P: ZL} is independent of Zhang and Luo~\cite{ZhaLuo19}.
\end{remark}

\section{Characterization of Cross varieties} \label{sec: characterization}

\begin{theorem} \label{T: main}
A variety of {\Jtrivial} monoids is Cross if and only if it excludes every one of the following almost Cross varieties\textup:
\begin{equation}
\bfF, \;\;\; \bfFdual, \;\;\; \bfH, \;\;\; \bfHdual, \;\;\; \bfI, \;\;\; \bfIdual, \;\;\; \bfK, \;\;\; \bfKdual, \;\;\; \bfL, \;\;\; \bfP, \;\;\; \bfPdual, \;\;\; \bfJackson_1, \;\;\; \bfJackson_2, \;\;\; \bfZL.
\label{D: 14 almost}
\end{equation}
Consequently\textup, these 14 varieties exhaust all almost Cross varieties of {\Jtrivial} monoids.
\end{theorem}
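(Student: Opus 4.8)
The plan is to prove the one substantive implication — that a subvariety $\bfV$ of $\bbJ$ excluding all $14$ listed varieties is Cross — and to read off the rest. The reverse implication is routine: a subvariety of a Cross variety is again small (a sublattice of a finite lattice), locally finite (a subvariety of a locally finite variety), hence finitely generated by Lemma~\ref{L: LF FG max}(ii), and finitely based by a standard compactness argument (only finitely many subvarieties lie above it); thus a Cross variety has no non-Cross, in particular no almost Cross, subvariety. The closing assertion then follows at once: each of the $14$ varieties is almost Cross by the propositions of Sections~\ref{sec: almost nfg} and~\ref{sec: almost fg}, while an arbitrary almost Cross $\bfW\subseteq\bbJ$, being non-Cross, must by the forward implication contain one of the $14$, and minimality of $\bfW$ among non-Cross varieties forces equality.

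For the forward implication, let $\bfV\subseteq\bbJ$ exclude all $14$ varieties; by Lemma~\ref{L: subvarieties of J}(ii) assume $\bfV\subseteq\bfJ_n$ for some $n\geq2$. First I would split on whether $\monRq\{xhx\}\in\bfV$. If so, then Lemma~\ref{L: xyx isoterm} applies, and since $\bfV$ excludes $\bfL,\bfP,\bfPdual,\bfJackson_1,\bfJackson_2$, it is Cross. If not, Lemma~\ref{L: xyx not isoterm}(i) places $\bfV$ inside one of $\bfJ_2\{\eqref{id: xxyx=xyx}\}$, $\bfJ_2\{\eqref{id: xyxx=xyx}\}$, $\bfJ_n\{xhx\approx x^2h\}$, $\bfJ_n\{xhx\approx hx^2\}$; in the last two, $\bfV$ is a subvariety of the Cross variety of Lemma~\ref{L: xyx not isoterm}(ii), hence Cross. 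The variety $\bfJ_2$ is self-dual, the family of $14$ varieties is closed under forming duals, and $xhx$ is a palindrome, so the remaining two cases are mutually dual, and it suffices to treat $\bfV\subseteq\bfJ_2\{\eqref{id: xyxx=xyx}\}$.

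So suppose $\bfV\subseteq\bfJ_2\{\eqref{id: xyxx=xyx}\}$, which is locally finite by Lemma~\ref{L: LF FG max}(i), and run a second split. If $\monFsub\notin\bfV$, then $\bfV\models\eqref{id: xxyx=xyx}$ by Lemma~\ref{L: excl Fsub Edual}(i), so $\bfV\subseteq\bfJ_2\{\eqref{id: xxyx=xyx},\eqref{id: xyxx=xyx}\}$; now if $\monQ\notin\bfV$ then $\bfV\subseteq\bfAzero$ is Cross by Corollary~\ref{C: exclusion A0 Q}(ii), while if $\monQ\in\bfV$ then $\bfV\in[\bfQ,\bfJ_2\{\eqref{id: xxyx=xyx},\eqref{id: xyxx=xyx}\}]$ and Lemma~\ref{L: excl H3} applies: $\monHsub\notin\bfV$ forces $\bfV\subsetneq\bfHdual$ (proper since $\bfHdual$ is excluded) hence Cross, $\monHsubdual\notin\bfV$ forces $\bfV\subsetneq\bfH$ hence Cross, and $\monHsub,\monHsubdual\in\bfV$ gives $\bfZL=\bfHsub\vee\bfHsubdual\subseteq\bfV$, contradicting the exclusion of $\bfZL$. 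If instead $\monFsub\in\bfV$, then $\monEdual\notin\bfV$ makes $\bfV$ Cross by Lemma~\ref{L: excl Fsub Edual}(ii) (using $\bfF\nsubseteq\bfV$); and if $\monEdual\in\bfV$ then $\bfFsub\vee\bfEdual\subseteq\bfV$, so $\bfV\subseteq\bfO\cap\bfJ_2$ by Lemma~\ref{L: excl I K} (using $\bfI,\bfK\nsubseteq\bfV$). From there, $\monQ\notin\bfV$ yields $\bfV\models x^2hxtx^2\approx x^2htx^2$ by Lemma~\ref{L: exclusion A0 Q B0}(ii), whence together with $\eqref{id: xyxx=xyx}$ one deduces $\bfV\models x^2hxtx\approx x^2htx$, so $\bfV\subseteq\bfO\cap\bfJ_2\{x^2hxtx\approx x^2htx\}$ is Cross by Corollary~\ref{C: O cap J2}(i); while $\monQ\in\bfV$ yields $\bfFsub\vee\bfQ\subseteq\bfV$, so $\bfV\models\text{\ref{id: Hsub}}_m$ for some $m\geq2$ by Lemma~\ref{L: excl P}, whence $\bfV\subseteq\bfO\cap\bfJ_2\{\text{\ref{id: Hsub}}_m\}$ is Cross by Corollary~\ref{C: O cap J2}(ii). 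Every branch ends with $\bfV$ Cross.

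The genuine difficulty lies not in this final assembly but in the machinery it invokes — the analysis of the noncommutative subvarieties of $\bfO\cap\bfJ_2$ (Proposition~\ref{P: subvarieties of O}, Corollary~\ref{C: O cap J2}), the exclusion lemmas (Lemmas~\ref{L: excl Fsub Edual}, \ref{L: excl H3}, \ref{L: excl I K}, \ref{L: excl P}, \ref{L: xyx isoterm}), and the lattice descriptions of the new varieties $\bfH$ and $\bfHdual$ in Proposition~\ref{P: H}. Within the proof of the theorem itself, the one point requiring care is organizing the nested split so that every branch terminates either as ``$\bfV$ is a proper subvariety of one of the $14$ almost Cross varieties'' or as ``$\bfV$ is a subvariety of an explicitly Cross variety,'' the sole exception being the branch $\monHsub,\monHsubdual\in\bfV$, which instead produces the outright contradiction $\bfZL\subseteq\bfV$; the only hands-on computation needed is $\{x^2hxtx^2\approx x^2htx^2,\ \eqref{id: xyxx=xyx}\}\vdash x^2hxtx\approx x^2htx$, obtained by two applications of $\eqref{id: xyxx=xyx}$.
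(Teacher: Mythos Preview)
Your proof is correct and follows essentially the same approach as the paper's: the same split on $\monRq\{xhx\}$, the same reduction by duality to $\bfJ_2\{\eqref{id: xyxx=xyx}\}$, and the same invocations of Lemmas~\ref{L: xyx isoterm}, \ref{L: excl Fsub Edual}, \ref{L: excl I K}, \ref{L: excl P}, \ref{L: excl H3} and Corollary~\ref{C: O cap J2}. The only cosmetic difference is that the paper checks $\monEdual\in\bfV$ before splitting on $\monFsub$, whereas you split on $\monFsub$ first and check $\monEdual$ only in the $\monFsub\in\bfV$ branch---equally valid, since the $\monFsub\notin\bfV$ branch never uses~(c).
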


\begin{proof} 
The varieties from~\eqref{D: 14 almost} were shown to be almost Cross in Sections~\ref{sec: almost nfg} and~\ref{sec: almost fg}, so they are excluded from any Cross variety.
Conversely, suppose that~$\bfV$ is a Cross variety of {\Jtrivial} monoids, so that \begin{enumerate}[\ (a)] \item $\bfV$ contains none of the varieties from~\eqref{D: 14 almost}. \end{enumerate}
If~$\bfV$ is either commutative or completely regular, then it is Cross by Lemma~\ref{L: comm CR}.
Therefore, suppose that~$\bfV$ is neither commutative nor completely regular, whence by Lemma~\ref{L: subvarieties of J}(ii), it is a subvariety of~$\bfJ_n$ for some $n \geq 2$.

If $\monRq\{xhx\} \in \bfV$, then~$\bfV$ is Cross by Lemma~\ref{L: xyx isoterm}.
Hence assume that $\monRq\{xhx\} \notin \bfV$, so that by Lemma~\ref{L: xyx not isoterm}(i), $\bfV$ is a subvariety of one of the following varieties: \[ \bfJ_2\{\eqref{id: xxyx=xyx}\}, \quad \bfJ_2\{\eqref{id: xyxx=xyx}\}, \quad \bfJ_n\{xhx \approx x^2h\}, \quad \bfJ_n\{xhx \approx hx^2\}, \quad n \geq 2. \]
Since the varieties $\bfJ_n\{xhx\approx x^2h\}$ and $\bfJ_n\{xyx \approx hx^2\}$ are Cross by Lemma~\ref{L: xyx not isoterm}(ii), by symmetry, it suffices to assume that \begin{enumerate}[\ (a)] \item[(b)] $\bfV \subseteq \bfJ_2\{\eqref{id: xyxx=xyx}\}$. \end{enumerate}
If $\monEdual \notin \bfV$, then since $\bfF \nsubseteq \bfV$ by~(a), it follows from~(b) and Lemma~\ref{L: excl Fsub Edual}(ii) that~$\bfV$ is Cross.
Hence assume that \begin{enumerate}[\ (a)] \item[(c)] $\monEdual \in \bfV$. \end{enumerate}
There are two cases to consider.

\medskip

\noindent\textsc{Case~1}: $\monFsub \notin \bfV$.
Then $\bfV \models \eqref{id: xxyx=xyx}$ by Lemma~\ref{L: excl Fsub Edual}(i), so that $\bfV \subseteq \bfJ_2\{\eqref{id: xxyx=xyx},\eqref{id: xyxx=xyx}\}$ by~(b).
If $\monQ \notin \bfV$, then $\bfV \subseteq \bfAzero$ by Corollary~\ref{C: exclusion A0 Q}(ii), so that~$\bfV$ is Cross by Proposition~\ref{P: A0 vee Q}.
If $\monQ \in \bfV$, so that $\bfV \in [\bfQ,\, \bfJ_2\{\eqref{id: xxyx=xyx},\eqref{id: xyxx=xyx} \}]$, then since $\monHsub, \monHsubdual \in \bfZL \nsubseteq \bfV$ by~(a), it follows from Lemma~\ref{L: excl H3} that either $\bfV \subseteq \bfHdual$ or $\bfV \subseteq \bfH$, whence~$\bfV$ is Cross due to $\bfV \neq \bfH, \bfHdual$ and Proposition~\ref{P: H}.

\medskip

\noindent\textsc{Case~2}: $\monFsub \in \bfV$.
Then $\bfV \in [\bfFsub \vee \bfEdual,\, \bfJ_2\{\eqref{id: xyxx=xyx}\}]$ by~(b) and~(c).
Since $\bfI,\bfK \nsubseteq \bfV$ by~(a), the inclusion $\bfV \subseteq \bfO$ holds by Lemma~\ref{L: excl I K}, so that $\bfV \subseteq \bfO \cap \bfJ_2\{\eqref{id: xyxx=xyx}\} = \bfO \cap \bfJ_2$.
If $\monQ \notin \bfV$, then it follows from Lemma~\ref{L: exclusion A0 Q B0}(ii) that $\bfV \models x^2hxtx \approx x^2htx$, whence~$\bfV$ is Cross by Corollary~\ref{C: O cap J2}(i).
If $\monQ \in \bfV$, so that $[\bfFsub \vee \bfQ,\, \bfJ_2\{\eqref{id: xyxx=xyx}\}]$, then since $\bfP \nsubseteq \bfV$ by~(a), it follows from Lemma~\ref{L: excl P} that $\bfV \models \text{\ref{id: Hsub}}_m$ for some $m \geq 2$, whence~$\bfV$ is Cross by Corollary~\ref{C: O cap J2}(ii).
\end{proof}

%
%
%


\begin{thebibliography}{99} 
\bibitem{Alm94} J.\ Almeida, Finite Semigroups and Universal Algebra. World Scientific, Singapore, 1994.
\url{http://doi.org/10.1142/2481}

\bibitem{BahOls75} Yu.\ A.\ Bahturin and A.\ Yu.\ Ol'shanski\u{\i}, Identical relations in finite Lie rings. Mat.\ Sb.\ (N.S.)~\textbf{96} (1975), no.~4, 543--559 [In Russian; English translation: Math.\ USSR-Sb.~\textbf{25} (1975), no.~4, 507--523] 
\url{http://doi.org/10.1070/SM1975v025n04ABEH002459}

\bibitem{Bir35} G.\ Birkhoff, On the structure of abstract algebras. Math.\ Proc.\ Cambridge Philos.\ Soc.~\textbf{31} (1935), 433--454.
\url{http://doi.org/10.1017/S0305004100013463}

\bibitem{BurSan81} S.\ Burris, H.\ P.\ Sankappanavar, A Course in Universal Algebra. Springer Verlag, New York, 1981.


\bibitem{Gus20} S.\ V.\ Gusev, A new example of a limit variety of monoids. Semigroup Forum~\textbf{101} (2020), 102--120.
\url{http://doi.org/10.1007/s00233-019-10078-1}

\bibitem{Gus21} S.\ V.\ Gusev, Limit varieties of aperiodic monoids with commuting idempotents. J.\ Algebra Appl.~\textbf{20} (2021), no.~9, 2150160.
\url{http://doi.org/10.1142/S0219498821501607}

\bibitem{Gus24} S.\ V.\ Gusev, Minimal monoids generating varieties with complex subvariety lattices. Proc.\ Edinb.\ Math.\ Soc.~(2)~\textbf{67} (2024), 617--642.
\url{http://doi.org/10.1017/S0013091524000178}

\bibitem{Gus25ijac} S.\ V.\ Gusev, Cross varieties of aperiodic monoids with commuting idempotents. Internat.\ J.\ Algebra Comput.~\textbf{35} (2025), 145--165.
\url{http://doi.org/10.1142/S021819672550002X}

\bibitem{Gus25sf} S.\ V.\ Gusev, Varieties of aperiodic monoids with commuting idempotents whose subvariety lattice is distributive. Semigroup Forum~\textbf{111}, 653--749 (2025)
\url{http://doi.org/10.1007/s00233-025-10586-3}

\bibitem{GusLeeVer22} S.\ V.\ Gusev, E.\ W.\ H.\ Lee, and B.\ M.\ Vernikov, The lattice of varieties of monoids. Jpn.\ J.\ Math.~\textbf{17} (2022), 117--183.
\url{http://doi.org/10.1007/s11537-022-2073-5}

\bibitem{GusSap22} S.\ V.\ Gusev and O.\ B.\ Sapir, Classification of limit varieties of $\mathcal{J}$-trivial monoids. Comm.\ Algebra~\textbf{50} (2022), 3007--3027.
\url{http://doi.org/10.1080/00927872.2021.2024561}

\bibitem{GusVer18} S.\ V.\ Gusev and B.\ M.\ Vernikov, Chain varieties of monoids. Dissertationes Math.~\textbf{534} (2018), 1--73.
\url{http://doi.org/10.4064/dm772-2-2018}

\bibitem{Hea68} T.\ J.\ Head, The varieties of commutative monoids. Nieuw Arch.\ Wisk.~(3)~\textbf{16} (1968), 203--206.

\bibitem{Hig60} G.\ Higman, Identical relations in finite groups, pp.\ 93--100, Conv.\ Internaz.\ di Teoria dei Gruppi Finiti, Firenze, 1960, Edizioni Cremonese, Rome, 1960. 

\bibitem{Jac05} M.\ Jackson, Finiteness properties of varieties and the restriction to finite algebras. Semigroup Forum~\textbf{70} (2005), 154--187; Erratum: Semigroup Forum~\textbf{96} (2018), 197--198. 
\url{http://doi.org/10.1007/s00233-004-0161-x}, \url{http://doi.org/10.1007/s00233-017-9861-x}

\bibitem{JacLee18} M.\ Jackson and E.\ W.\ H.\ Lee, Monoid varieties with extreme properties. Trans.\ Amer.\ Math.\ Soc.~\textbf{370} (2018), 4785--4812.
\url{http://doi.org/10.1090/tran/7091}

\bibitem{Koz12} P.\ A.\ Kozhevnikov, On nonfinitely based varieties of groups of large prime exponent. Comm.\ Algebra~\textbf{40} (2012), 2628--2644.
\url{http://doi.org/10.1080/00927872.2011.584097}

\bibitem{Kru73} R.\ L.\ Kruse, Identities satisfied by a finite ring. J.\ Algebra~\textbf{26} (1973), 298--318.
\url{http://doi.org/10.1016/0021-8693(73)90025-2}

\bibitem{Lee08} E.\ W.\ H.\ Lee, On the variety generated by some monoid of order five. Acta Sci.\ Math.\ (Szeged)~\textbf{74} (2008), 509--537.

\bibitem{Lee11} E.\ W.\ H.\ Lee, Cross varieties of aperiodic monoids with central idempotents. Port.\ Math.~\textbf{68} (2011), 425--429. 
\url{http://doi.org/10.4171/PM/1900}

\bibitem{Lee12} E.\ W.\ H.\ Lee, Maximal Specht varieties of monoids. Mosc.\ Math.\ J.~\textbf{12} (2012), 787--802.
\url{http://doi.org/10.17323/1609-4514-2012-12-4-787-802}

\bibitem{Lee13} E.\ W.\ H.\ Lee, Almost Cross varieties of aperiodic monoids with central idempotents. Beitr.\ Algebra Geom.~\textbf{54} (2013), 121--129.
\url{http://doi.org/10.1007/s13366-012-0094-6}

\bibitem{Lee14zns} E.\ W.\ H.\ Lee, Inherently non-finitely generated varieties of aperiodic monoids with central idempotents. Zap.\ Nauchn.\ Sem.\ S.-Peterburg.\ Otdel.\ Mat.\ Inst.\ Steklov.\ (POMI) \textbf{423} (2014), 166--182 [Reprinted: J.\ Math.\ Sci.\ (N.Y.) \textbf{209} (2015), 588--599]
\url{http://doi.org/10.1007/s10958-015-2515-1}

\bibitem{Lee14rm} E.\ W.\ H.\ Lee, On certain Cross varieties of aperiodic monoids with commuting idempotents. Results Math.~\textbf{66} (2014), 491--510.
\url{http://doi.org/10.1007/s00025-014-0390-6}

\bibitem{Lee23} E.\ W.\ H.\ Lee, Advances in the Theory of Varieties of Semigroups. Birkh\"{a}user/Springer, Cham, 2023.
\url{http://doi.org/10.1007/978-3-031-16497-2}


\bibitem{LeeRhoSte19} E.\ W.\ H.\ Lee, J.\ Rhodes, and B.\ Steinberg, Join irreducible semigroups. Internat.\ J.\ Algebra Comput.~\textbf{29} (2019), 1249--1310.
\url{http://doi.org/10.1142/S0218196719500498}

\bibitem{Lvo73} I.\ V.\ L’vov, Varieties of associative rings.~I. Algebra i Logika~\textbf{12} (1973), 269--297 [In Russian; English translation: Algebra and Logic~\textbf{12} (1973), 150--167] 
\url{http://doi.org/10.1007/BF02218695}


\bibitem{McK70} R.\ McKenzie, Equational bases for lattice theories. Math.\ Scand.~\textbf{27} (1970), 24--38. 
\url{http://doi.org/10.7146/math.scand.a-10984}

\bibitem{OatPow64} S.\ Oates and M.\ B.\ Powell, Identical relations in finite groups. J. Algebra~\textbf{1} (1964), 11--39. 
\url{http://doi.org/10.1016/0021-8693(64)90004-3}

\bibitem{SapMV87} M.\ V.\ Sapir, Problems of Burnside type and the finite basis property in varieties of semigroups. Izv.\ Akad.\ Nauk SSSR Ser.\ Mat.~\textbf{51} (1987), no.~2, 319--340 [In Russian; English translation: Math.\ USSR-Izv.~\textbf{30} (1988), no.~2, 295--314]
\url{http://doi.org/10.1070/IM1988v030n02ABEH001012}

\bibitem{SapMV91} M.\ V.\ Sapir, On Cross semigroup varieties and related questions. Semigroup Forum~\textbf{42} (1991), 345--364.
\url{http://doi.org/10.1007/BF02573430}

\bibitem{SapO15a} O.\ B.\ Sapir, Finitely based monoids. Semigroup Forum~\textbf{90} (2015), 587--614.
\url{http://doi.org/10.1007/s00233-015-9709-1}


\bibitem{SapO21} O.\ B.\ Sapir, Limit varieties of $J$-trivial monoids. Semigroup Forum~\textbf{103} (2021), 236--260.
\url{http://doi.org/10.1007/s00233-021-10183-0}

\bibitem{Sim75} I.\ Simon, Piecewise testable events, in Automata theory and formal languages (Second GI Conference, Kaiserslautern, 1975), Lecture Notes in Computer Science, vol.~33 (Springer, Berlin, 1975), 214--222.
\url{http://doi.org/10.1007/3-540-07407-4_23}


\bibitem{Wis86} S.\ L.\ Wismath, The lattices of varieties and pseudovarieties of band monoids. Semigroup Forum~\textbf{33} (1986), 187--198.
\url{http://doi.org/10.1007/BF02573192}

\bibitem{ZhaLuo16} W.\ T.\ Zhang and Y.\ F.\ Luo, A sufficient condition under which a semigroup is nonfinitely based. Bull.\ Aust.\ Math.\ Soc.~\textbf{93} (2016), 454--466.
\url{http://doi.org/10.1017/S0004972715001343}

\bibitem{ZhaLuo19} W.\ T.\ Zhang and Y.\ F.\ Luo, A new example of limit variety of aperiodic monoids, manuscript, 2019. Available at: \url{http://doi.org/10.48550/arXiv.1901.02207}

\end{thebibliography}
\end{document}